\documentclass[10pt]{article}
\usepackage{geometry}
\usepackage[T1]{fontenc}
\usepackage{charter}
\usepackage{amsmath,amssymb,amsfonts,amsopn,amsthm}
\usepackage{mathtools}
\usepackage{dsfont}
\usepackage{bm}
\usepackage{bbm}
\usepackage{nicefrac}
\usepackage{mathrsfs}
\usepackage{graphicx}
\usepackage{epstopdf}
\usepackage{caption}
\usepackage{subcaption}         
\usepackage{overpic}
\usepackage{float}
\usepackage{chngcntr}
\counterwithout{figure}{section}
\usepackage[section]{placeins}
\usepackage{tikz}
\usepackage{pgfplots}
\pgfplotsset{compat=1.17}
\usepackage{tikz-3dplot}
\usetikzlibrary{%
	patterns, shapes, positioning, shadings, calc,
	decorations.pathreplacing, decorations.pathmorphing,
	angles, quotes, backgrounds, intersections, plotmarks}
\tikzset{snake it/.style={decorate, decoration=snake}}
\usepgfplotslibrary{fillbetween}
\usepackage{etoolbox}
\apptocmd{\thebibliography}{%
	\setlength{\itemsep}{-1pt}%
	\setlength{\parsep}{0pt}%
	\setlength{\parskip}{0pt}%
}{}{}
\usepackage{titlesec}
\titleformat{\section}
{\large\bfseries}{\thesection.}{0.6em}{}
\titleformat{\subsection}
{\normalsize\bfseries}{\thesubsection.}{0.5em}{}
\titleformat{\subsubsection}
{\normalsize\itshape}{\thesubsubsection.}{0.5em}{}
\titlespacing*{\section}{0pt}{1.8ex plus .5ex minus .2ex}{0.8ex plus .2ex}
\titlespacing*{\subsection}{0pt}{0.2ex plus .1ex minus .1ex}{0.3ex plus .1ex}
\usepackage[numbers,sort&compress]{natbib}

\usepackage{algorithm}
\usepackage{algpseudocode}
\algrenewcommand\algorithmicrequire{\textbf{Input:}}
\algrenewcommand\algorithmicensure{\textbf{Output:}}
\makeatletter
\algnewcommand{\Why}[1]{\Statex\hskip\ALG@thistlm\(\triangleright\)\ \textit{#1}}
\makeatother

\usepackage{hyperref}
\hypersetup{
	colorlinks = true,
	linkcolor  = blue,
	citecolor  = red,
	urlcolor   = red!20,
	filecolor  = red!20,
}
\usepackage{enumitem}
\setlist{
	topsep=2pt,
	itemsep=2pt,
	parsep=0pt,
	rightmargin=0pt
}
\usepackage{longtable}
\usepackage{supertabular}
\usepackage{siunitx}
\usepackage{textcomp}
\usepackage{indentfirst}
\usepackage{url}
\usepackage{ifthen}
\usepackage[normalem]{ulem}     
\usepackage{listings}
\usepackage{authblk}

\definecolor{navyblue}{rgb}{0.0,0.0,0.5}
\definecolor{skyblue}{RGB}{135,206,235}
\definecolor{deepskyblue}{RGB}{0,191,255}

\numberwithin{equation}{section}
\newtheorem{theorem}{Theorem}[section]
\newtheorem{lemma}[theorem]{Lemma}
\newtheorem{corollary}[theorem]{Corollary}

\newtheorem{definition}[theorem]{Definition}

\newtheorem{remark}[theorem]{Remark}

\def\md{{\mathrm{d}}}

\def\O{\Omega}

\def\e{\varepsilon}

\def\fU{{\mathfrak{U}}}

\def\fC{\mathfrak C}

\def\fW{\mathfrak{W}}

\def\Dc{\mathcal{D}}

\def\N{{\mathbb{N}}}

\def\R{{\mathbb{R}}}

\def\Bc{{\mathcal{B}}}
\def\Ec{{\mathcal{E}}}

\def\Qc{{\mathcal{Q}}}
\def\Sc{{\mathcal{S}}}

\def\Kc{{\mathcal{K}}}

\def\Rc{{\mathcal{R}}}

\def\Jc{{\mathcal{J}}}

\def\wt{\widetilde}
\def\X{\times}
\def\div{{\mathrm{div}}}
\def\hat{\widehat}

\def\th{{\boldsymbol{\theta}}}

\def\bvp{\boldsymbol{\varphi}}
\def\Oc{\mathcal{O}}

\def\Gn{{\bf n}}
\def\Gu{{\bf u}}

\def\GH{{\bf H}}\def\GJ{{\bf J}}

\def\bv{\mathbf{v}}

\def\bw{\mathbf{w}}

\def\bn{\mathbf{n}}

\def\t{\boldsymbol{\theta}^\e}

\def\Ac{\mathcal{A}}
\newcommand{\Rey}{\mathrm{Re}}

\newcommand{\ocarrow}{\overset{\text{\rm oc}}{\rightharpoonup}}

\newcommand{\m}{\mathbf m}
\def\fX{\mathfrak{X}}

\DeclareCaptionType{scheme}[Scheme][List of Schemes]

\begin{document}
	%
	
	\title{Homogenization of an optimal control problem for a Navier–Stokes–Cahn–Hilliard system in a 3D perforated domain}

	\author{%
	Amartya Chakrabortty\thanks{
		Department of Mathematics, IIT Kharagpur,  721302, Kharagpur
		India; \texttt{amartyat26@kgpian.iitkgp.ac.in}},~~ 
	Haradhan Dutta\thanks{Department of Mathematics, IIT Kharagpur, 721302, Kharagpur India; \texttt{hdutta1412@kgpian.iitkgp.ac.in}},~~
	Hari Shankar Mahato\thanks{Department of Mathematics, IIT Kharagpur, 721302, Kharagpur India; \texttt{hsmahato@maths.iitkgp.ac.in}}%
}

	
	\date{} 
	
	\newcommand{\keywords}[1]{\par\smallskip\noindent\textbf{Keywords: }#1}
	\newcommand{\subjclass}[1]{\par\noindent\textbf{MSC 2020: }#1}
	
	\maketitle
	
	\vspace{-1.0cm}
\begin{abstract}
	We study the homogenization of an optimal control problem governed by a
	Navier--Stokes--Cahn--Hilliard system in a periodically perforated
	three-dimensional domain. The obstacles are Lipschitz sets of size
	$\e^\alpha$, $\alpha>3$, where $\e$ is the period. The fluid satisfies the
	no-slip condition on the outer boundary and the free-slip condition on the
	obstacles, the phase-field equation contains a source term, and the
	convective term carries a pore-scale Reynolds number that tends to zero as
	$\e\to0$. The control is a distributed body force, and the cost functional
	tracks the velocity in $L^2$ and the order parameter in $H^1$. Since
	uniqueness of weak solutions of the state system is not known for fixed
	$\e$, the control-to-state relation is multivalued, and we pose the
	microscopic problems over control--state pairs. We prove the existence of
	microscopic optimal pairs and the sequential $\Gamma$-convergence of the
	cost functionals to the cost functional of a control problem governed by the
	unsteady Stokes--Cahn--Hilliard system on the full domain. As a
	consequence, the optimal values converge, every cluster point of microscopic
	optimal pairs is optimal for the homogenized problem, and the optimal
	controls converge strongly along the corresponding subsequences. The
	$\liminf$ inequality follows from the homogenization of the state system,
	including the strong convergence of the gradients of the order parameter.
	Only the recovery sequence requires uniqueness of weak solutions of the
	homogenized system, which we prove when the outer boundary is of class
	$C^{2,1}$.
\end{abstract}

\keywords{Navier--Stokes--Cahn--Hilliard system, homogenization, perforated
	domain, free-slip boundary condition, optimal control, $\Gamma$-convergence}\\

\subjclass{35B27, 35Q35, 76T10, 76M50, 49J45.}


\section{Introduction}\label{Sec01}

This paper studies the homogenization of an optimal control problem (OCP) whose
state system is a Navier--Stokes--Cahn--Hilliard (NSCH) system in a
periodically perforated three-dimensional domain. For fixed $\e$ the state
system is of three-dimensional Navier--Stokes type, and uniqueness of its weak
solutions is not known. A control may therefore correspond to several states,
so the control-to-state relation is multivalued and there is no reduced cost
functional. Linearizing a multivalued relation and deriving an adjoint system
and a first-order optimality system from it is non-trivial, and to the best of
our knowledge no such system is available for the present problem. We therefore
pass to the limit $\e\to0$ in the cost functionals themselves rather than in an
optimality system. We pose the microscopic problems over control--state pairs,
encode the state equation as the indicator of the admissible set, and prove the
sequential $\Gamma$-convergence of the resulting functionals, in the framework
of \cite{buttazzo1982gamma,denkowski1993asymptotic}; see also
\cite{DalMaso1993}. The limit state system is a Stokes--Cahn--Hilliard (SCH)
system on the full domain, for which we prove uniqueness.

\smallskip
\textbf{The state system.}
Let $\O\subset\R^3$ be a bounded domain with Lipschitz boundary and let $\e>0$
be the period. Every cell $\e(\kappa+\overline{Y})\subset\O$, $Y=(0,1)^3$, contains one
solid obstacle $\Oc_{\e,\kappa}$, a copy of a fixed reference obstacle $\Oc$ with
Lipschitz boundary, scaled by $a_\e=\e^\alpha$ with $\alpha>3$. The fluid domain
$\O_p^\e$ is $\O$ with the obstacles $\O_s^\e$ removed, and
$\Gamma_s^\e=\partial\O_s^\e$ is the union of their boundaries; the precise
construction is given in Section~\ref{sec:domain}. For a fixed time interval
$S=(0,T)$, the velocity $\Gu^\e$, the pressure $p^\e$, the order parameter
$\phi^\e$ and the chemical potential $\mu^\e$ solve
\begin{equation}\label{MainP01}
	\left\{\begin{aligned}
		& \partial_t \mathbf u^\e
		+ \Rey_\e\,(\mathbf u^\e\!\cdot\nabla)\mathbf u^\e
		- \div\!\big(2\nu D( \mathbf u^\e)\big)
		+ \nabla p^\e
		+ \lambda \phi^\e \nabla \mu^\e
		= \th^\e
		&& \text{in } S\times \Omega_p^\e,\\
		& \nabla\cdot \mathbf u^\e = 0
		&& \text{in } S\times \Omega_p^\e,\\
		& \mathbf u^\e = \mathbf 0\quad \text{on }  S\times \partial\Omega,\qquad
		\mathbf u^\e\cdot\Gn^\e = 0,\quad
		\big(2\nu D(\mathbf u^\e)\mathbf n^\e\big)\!\cdot\boldsymbol\tau^\e = 0
		&& \text{on }  S\times \Gamma^\e_s,\\
		& \partial_t \phi^\e + \mathbf u^\e\!\cdot\nabla \phi^\e
		- \Delta \mu^\e + G(\phi^\e) = 0
		&& \text{in } S\times \Omega_p^\e,\\
		& \mu^\e = -\Delta \phi^\e + F'(\phi^\e)
		&& \text{in } S\times \Omega_p^\e,\\
		& \mathbf n^\e\!\cdot\nabla \phi^\e = 0,\qquad
		\mathbf n^\e\!\cdot\nabla \mu^\e = 0
		&& \text{on } S\times \partial\Omega_p^\e,\\
		& \mathbf u^\e(0,x) = \mathbf u^\e_0(x),
		\qquad
		\phi^\e(0,x) = \phi^\e_0(x)
		&& \text{in } \Omega_p^\e,
	\end{aligned}\right.
\end{equation}
where $\nu,\lambda>0$, $F$ is the double-well potential \eqref{PotentialF}, $G$
is a source term satisfying \eqref{AssG01}, and the body force $\th^\e$ is the
control. The fluid satisfies the no-slip condition on $\partial\O$ and the
free-slip condition on $\Gamma_s^\e$: it does not penetrate the obstacles, and
the tangential stress vanishes there. The order parameter and the chemical
potential satisfy homogeneous Neumann conditions on $\partial\O_p^\e$. Because
of the source term, the mean of the order parameter is not conserved. The system
\eqref{MainP01} is the diffuse-interface description of an incompressible
two-phase flow \cite{anderson1998diffuse}. The convective term carries the
pore-scale Reynolds number $\Rey_\e$, which satisfies
\begin{equation}\label{ReynoldsAssumption}
	0<\Rey_\e\le1,
	\qquad
	\Rey_\e\longrightarrow0\quad\text{as }\e\to0 .
\end{equation}
The factor $\Rey_\e$ is a smallness assumption on the data: rescaling the
velocity and the pressure by $\Rey_\e$ turns \eqref{MainP01} into the NSCH
system driven by data of size $\Rey_\e$. Physically, this scaling describes the
low Reynolds number regime on the viscous diffusion timescale. Inertia is
negligible in the momentum balance, but the flow is not quasi-static, since the
unsteady term is left unscaled. The condition $\Rey_\e\to0$ removes the
convective term from the limit, so the limit system is of Stokes type. The
geometry enters the analysis through two parameters
(Remark~\ref{rem:alpha-role}). The ratio $a_\e/\e=\e^{\alpha-1}$ controls the
obstacle volume $|\O_s^\e|=O(\e^{3(\alpha-1)})$ and every $L^2$-level estimate.
The parameter $\rho_\e=a_\e/\e^3=\e^{\alpha-3}$ governs the cumulative
obstacle-scale viscous energy and tends to zero for $\alpha>3$, so the obstacles
are in the subcritical regime. Weak solutions of \eqref{MainP01} exist for every
$\e$ and every admissible control (Remark~\ref{rem:existence}).

\smallskip
\textbf{The control problem.}
Given a Tikhonov parameter $\beta>0$ and targets $\mathbf u_d$ and $\phi_d$ for
the velocity and the order parameter, we minimize
\[
\frac12\|\Gu^\e-\mathbf u_d\|^2_{L^2(S\X\O^\e_p)}
+\frac12\|\phi^\e-\phi_d\|^2_{L^2(S;H^1(\O^\e_p))}
+\frac{\beta}{2}\|\th^\e\|^2_{L^2(S\X\O^\e_p)}
\]
with respect to the control $\th^\e$, where $(\Gu^\e,\phi^\e,\mu^\e)$ is a weak
solution of \eqref{MainP01}. We minimize over control--state pairs; this is the
OCP \eqref{OPeps} of Section~\ref{SecMain}.

\smallskip
\textbf{Main results.}
We show that the optimal values of \eqref{OPeps} converge to the optimal value
of an OCP \eqref{OPlim} governed by the unsteady SCH system on $\O$, that every
cluster point of microscopic optimal pairs is optimal for \eqref{OPlim}, and
that the optimal controls converge strongly (Theorems~\ref{ThmGamma}
and~\ref{ThmOCPHomo}). The $\liminf$ inequality requires limits of admissible
pairs to be admissible. The recovery sequence requires every admissible pair of
\eqref{OPlim} to be a limit of admissible pairs of \eqref{OPeps}. The first
property is the homogenization of the state system
(Theorem~\ref{ThHomogenized}); it holds for every choice of microscopic weak
solutions and requires only the Lipschitz regularity of $\O$ and of $\Oc$. The
second follows from the uniqueness of weak solutions of the limit system
(Theorem~\ref{ThmUniq}), which we prove when $\partial\O\in C^{2,1}$, and this
is the only place in the paper where uniqueness is used. Because the order
parameter is tracked in $H^1$, the tracking terms converge only if the gradients
of the order parameter converge strongly; we prove this in
Corollary~\ref{CorGradPhiStrong}. Three further difficulties arise in the
proofs. The time derivative of the velocity is bounded only in the dual of an
$\e$-dependent space, so the Aubin--Lions--Simon theorem is not available. We
obtain strong compactness instead from a primal and a zero extension of each
field, together with the pairings of the velocity with fixed solenoidal fields.
These fields do not satisfy the impermeability condition on the obstacles, so
they are not admissible test functions; we construct a solenoidal restriction
operator that corrects them near every obstacle and, since $\alpha>3$,
converges strongly in $H^1$. Finally, the mean of the order parameter is not
conserved, and the uniqueness proof carries it as a separate component of the
energy.

\smallskip
For the Stokes equations in domains with obstacles of the size of the period,
Darcy's law was derived in \cite{Allaire89}. For the Stokes and stationary
Navier--Stokes equations in domains with small holes and no-slip conditions,
the additional zeroth-order term that appears at the critical hole size was
identified in \cite{AllaireTinyHolesI,AllaireTinyHolesII}. The evolutionary
Navier--Stokes system in perforated domains was homogenized in
\cite{lu2023homogenization}, slip conditions on the holes were considered in
\cite{slip1991allairehomogenization,hofer2022_ns_slip}, and uniform estimates
for the Stokes equations on perforated domains are developed in
\cite{LuUniformStokes21}.
For the NSCH system on a fixed domain, existence and well-posedness are studied
in
\cite{boyer1999mathematical,abels2009diffuse,colli2012global,abels2013incompressible,giorgini2020weak},
and uniqueness and regularity for the system with a singular potential were
established in \cite{giorgini2019uniqueness}. Cahn--Hilliard equations with a
nonlinear source term are studied in \cite{miranville2021source}, and an NSCH
system with chemotaxis and mass transport in \cite{giorgini2026chemotaxis}.
Effective SCH equations for periodic porous media were derived in
\cite{schmuck2012_stokesch}, and rigorous homogenization results for
Cahn--Hilliard, SCH and NSCH systems in porous media are obtained in
\cite{Hari01,Hari02,bunoiu2020homogenization,chakrabortty2026quantitative}. In
\cite{schmuck2012_stokesch,Hari01,Hari02,chakrabortty2026quantitative} the
solid inclusions are of the size of the period, and the effective systems
involve cell problems. The existence of weak solutions and the homogenization of
an NSCH system with an oscillating viscosity tensor, free slip and a source term
in a three-dimensional perforated domain are established in
\cite{chakrabortty2025navier} by periodic unfolding. Here we homogenize the
state system with extension and restriction operators, and the homogenization
also gives the strong convergence of the gradients of the order parameter that
the cost requires.

For evolutionary Navier--Stokes flows, including B\'enard-type problems, the
existence of optimal controls was proved in \cite{AbergelTemam1990}, where the
optimal control is not expected to be unique. Distributed optimal control of
NSCH systems on a fixed domain is studied in \cite{HintermullerWegner2014} for a
time-discretized system and in \cite{FrigeriRoccaSprekels2016} for a nonlocal
system in two dimensions. These works derive first-order optimality conditions,
which rely on a single-valued and differentiable control-to-state map.

Homogenization of elliptic OCPs with oscillating coefficients, possibly
appearing both in the state equation and in the cost functional, is treated in
\cite{MR1466916}. In perforated domains, OCPs were homogenized in
\cite{MR1666365,MuthukumarNandakumaran2009}, and the convergence of controls and
cost functionals when the homogenization produces a strange term was studied in
\cite{DiazPodolskiyShaposhnikova2022}. For the Stokes system in perforated
domains, OCPs were homogenized in
\cite{MuthukumarNandakumaran2008,GargSardar2023} for holes of the size of the
period, leading to a Darcy-type law and to a Stokes system on the full domain,
respectively, and in \cite{SaintJeanPaulinZoubairi2002} in a setting in which a
strange term appears. Boundary velocity suboptimal controls of an
incompressible flow in a cylindrically perforated domain were studied in
\cite{DApiceDeMaioKogut2009}. Semilinear OCPs in domains with oscillating
boundaries were studied in \cite{Nanda01,Nanda03,Nanda04,Nanda02}. The
homogenization of a semilinear nonlocal OCP in a high-contrast elastic domain is
studied in \cite{chakrabortty2026homogenization}, using a $\liminf$--$\limsup$
argument inspired by sequential $\Gamma$-convergence. In these works the state
system is uniquely solvable, and the limit is typically taken in the optimality
system, which again needs a single-valued control-to-state map with enough
regularity.

The $\Gamma$-convergence approach to OCPs was introduced in
\cite{buttazzo1982gamma}; see also \cite{DalMaso1993}. The problem is written as
the minimization of an extended-real-valued functional over control--state
pairs, with the state equation encoded in the indicator of the admissible set,
so the state need not be uniquely determined by the control. Denkowski and
Mortola \cite{denkowski1993asymptotic} apply it to parabolic differential
inclusions with linear, $G$-convergent principal parts. The controls are initial
data, the states are selection--trajectory pairs, and the admissible sets are the
graphs of the solution multifunctions. They show that the $\Gamma$-convergence of
the indicators of these graphs is equivalent to the Kuratowski convergence of the
solution sets, prove the upper half of this convergence by compactness and the
lower half under a Lipschitz condition on the right-hand sides in the Hausdorff
metric, and deduce the convergence of minimal values and of optimal solutions.
Kogut and Leugering developed the approach for the homogenization of OCPs,
through the variational convergence of constrained minimization problems; see
\cite{KogutLeugering2011} and the references therein. For state-constrained
semilinear problems with oscillating control regions, they identify in
\cite{NL01} the Kuratowski limit of the admissible control--state sets and the
$\Gamma$-limit of the extended cost functionals. Problems with exact partial
controllability constraints, including a perforated domain, are treated in
\cite{KogutLeugering2008}, and suboptimal boundary controls for elliptic
equations in perforated domains in \cite{DApiceDeMaioKogut2008}.

The pair formulation, the indicator of the admissible set and the conclusions
drawn from sequential $\Gamma$-convergence are within the framework of
\cite{buttazzo1982gamma,denkowski1993asymptotic}. Our main contribution is to
verify the two convergence properties that this framework requires, for a
three-dimensional evolutionary NSCH system in a perforated domain. That limits
of admissible pairs are admissible is the homogenization of the state system,
which holds for every choice of microscopic weak solutions. That every
admissible pair of the limit problem is a limit of admissible pairs is obtained
in \cite{denkowski1993asymptotic} from the Lipschitz condition and Gr\"onwall's
inequality; here it follows from the uniqueness of the limit system, which we
also prove by a Gr\"onwall argument. The setting differs from that of
\cite{denkowski1993asymptotic} in four respects. The state system is nonlinear,
and its non-uniqueness comes from the three-dimensional Navier--Stokes-type
structure rather than from a multivalued right-hand side. The function spaces
depend on $\e$, so no fixed Hilbert triple is available and each field has to be
extended to $\O$. The control is a distributed body force, and the order
parameter is tracked in $H^1$. In
\cite{MuthukumarNandakumaran2008,GargSardar2023,SaintJeanPaulinZoubairi2002}
the state system is a Stokes system, which is linear and uniquely solvable.
Finally, Theorem~\ref{ThmUniq} concerns an SCH system with a source term, whose
order parameter has a non-conserved mean; to our knowledge, this case is not
covered by the existing uniqueness results, in particular by
\cite{giorgini2019uniqueness}.

\smallskip
The paper is organized as follows. General notation and function spaces are
introduced in Section~\ref{Sec02}. In Section~\ref{Sec03}, we describe the
perforated domain, state the assumptions and define weak solutions of
\eqref{MainP01}. The OCPs and the main results are stated in
Section~\ref{SecMain}. Section~\ref{Sec04} contains the extension operators, the
uniform functional inequalities, the solenoidal restriction operator and the
proof of the a priori estimates; Section~\ref{SSec61} the compactness of the
extended solutions; Section~\ref{Sec06} the proofs of the homogenization and
uniqueness results; and Section~\ref{sec:control} the proofs of the results on
the OCPs.

\section{Notation and function spaces}\label{Sec02}

Throughout, $S=(0,T)$ is a fixed time interval, and $\O_p^\e$, $\O_s^\e$,
$\Gamma_s^\e$ and the outward unit normal $\Gn^\e$ on $\partial\O_p^\e$ are
those of Section~\ref{sec:domain}; $\boldsymbol\tau^\e$ denotes any unit
tangent vector on $\Gamma_s^\e$. We abbreviate $\O_T:=S\X\O$ and
$\Qc^\e_p:=S\X\O^\e_p$. For a vector field $\bv$, $D(\bv):=\frac12\big(\nabla\bv
+(\nabla\bv)^{\top}\big)$ is the symmetric gradient.

Let $X\subset\R^3$ be a bounded domain with Lipschitz boundary. We denote by
$L^r(X)$ and $W^{k,r}(X)$ the usual Lebesgue and Sobolev spaces with their
standard norms, and set $H^k(X)=W^{k,2}(X)$. The subspace of $H^1(X)$ with zero
trace on $\partial X$ is $H^1_0(X)$, and the mean-zero subspace of $L^2(X)$ is
$L^2_0(X)$. We write $H^1_{0,\div}(X)^3$ for the divergence-free subspace of
$H^1_0(X)^3$, with dual $H^{-1}_{0,\div}(X)^3:=(H^1_{0,\div}(X)^3)^\ast$, and
set $H^{-1}(X):=(H^1(X))^\ast$; note that this is the dual of $H^1(X)$, not of
$H^1_0(X)$. For duality pairings between a space and its dual we write
$\langle\cdot,\cdot\rangle$, and $(\cdot,\cdot)$ denotes the inner product of
$L^2(X)$ or $L^2(X)^3$. The space of $m$-times continuously differentiable
functions on $\overline X$ is $C^m(\overline X)$; $C^\infty_c(X)$ is the space
of smooth compactly supported functions in $X$, $C^\infty_{c,\div}(X)^3$ its
divergence-free subspace, and $\Dc'(X)$ the space of distributions. For a
Hilbert space $H$, $C_w([0,T];H)$ denotes the space of weakly continuous
functions from $[0,T]$ into $H$. We write $A\Subset B$ both for a compact
embedding and for a compactly contained set.

On the fluid domain we use the velocity spaces adapted to the mixed
no-slip/free-slip conditions,
\begin{equation}\label{eq:set}
	\begin{aligned}
		\GH^1(\O_p^\e)&:=\bigl\{\bv\in H^1(\O_p^\e)^3:
		\bv=0\ \text{on }\partial\O,\ \bv\cdot\Gn^\e=0\ \text{on }\Gamma_s^\e\bigr\},\\
		\GH^1_{\div}(\O_p^\e)&:=\bigl\{\bv\in\GH^1(\O_p^\e):
		\div\,\bv=0\ \text{in }\O_p^\e\bigr\},
	\end{aligned}
\end{equation}
with dual $\GH^{-1}_{\div}(\O_p^\e):=\big(\GH^1_{\div}(\O_p^\e)\big)^\ast$, and
the spaces of solenoidal fields
\[
H^\sigma:=\overline{C^\infty_{c,\div}(\O)^3}^{\,L^2(\O)^3},
\qquad
H_\e^\sigma:=\overline{\GH^1_{\div}(\O_p^\e)}^{\,L^2(\O_p^\e)^3}.
\]

For a field $w$ defined on $\O_p^\e$ we write
\begin{equation}\label{eq:extensions-notation}
	\wt w:=E^\e w\ \ \text{(resp.\ }\Ec^\e w\text{ for vector fields)},
	\qquad
	\overline{w}:=
	\begin{cases}
		w & \text{in }\O_p^\e,\\
		0 & \text{in }\O_s^\e,
	\end{cases}
\end{equation}
for the primal and the zero extension of $w$ to $\O$, the extension operators
$E^\e$ and $\Ec^\e$ being those of Lemmas~\ref{Emain} and~\ref{LemD02}. The
characteristic function of a measurable set $A$ is denoted by $\chi_A$.

For $\psi$ defined on $\Qc^\e_p$ we use the norm
\begin{equation}\label{H1PoreNorm}
	\|\psi\|^2_{L^2(S;H^1(\O^\e_p))}
	=\|\psi\|^2_{L^2(\Qc^\e_p)}+\|\nabla\psi\|^2_{L^2(\Qc^\e_p)} ,
\end{equation}
and analogously on $\O_T$. In estimates we write $L^2(X)$ for $L^2(X)^3$ or
$L^2(X)^{3\times3}$, indicating the full product space only when stating weak or
strong convergence. We use the Einstein summation convention. The symbol $C$
denotes a generic positive constant independent of $\e$, whose value may change
from line to line.
	
	\section{Problem setting}\label{Sec03}
	
\subsection{Domain description}\label{sec:domain}
Let $\O\subset\mathbb{R}^3$ be a bounded domain with Lipschitz boundary and let
$Y=(0,1)^3$ be the unit reference cell. Let $\Oc\subset\mathbb{R}^3$ be a bounded domain with Lipschitz boundary such that $0\in\Oc$ and $\mathbb{R}^3\setminus
\overline{\Oc}$ is connected; $\Oc$ is the centered reference obstacle. Fix a
placement point $y_0\in Y$ and set
$d_0:=\operatorname{dist}(y_0,\partial Y)>0$ and
$R_{\Oc}:=\sup_{z\in\Oc}|z|<\infty$.

Fix $\alpha>3$. The physical size of the obstacles and their size relative to the
periodicity scale are $a_\e:=\e^{\alpha}$ and
$\delta_\e:=a_\e/\e=\e^{\alpha-1}$; since $\alpha>1$, we have
$\delta_\e\to0$ as $\e\to0$. For $0<\e<1$ set
$\Kc_\e:=\{\kappa\in\mathbb Z^3:\e(\kappa+\overline Y)\subset\O\}$. Since $\O$ is
bounded, $\#\Kc_\e<\infty$ for every $\e$, and $\#\Kc_\e=O(\e^{-3})$.

For $\kappa\in\Kc_\e$ let $x_{\e,\kappa}:=\e(\kappa+y_0)$ be the obstacle center
in the cell $\e(\kappa+Y)$ and let $\Oc_{\e,\kappa}:=x_{\e,\kappa}+a_\e\Oc$ be the
corresponding solid obstacle. Each obstacle has diameter of order $a_\e$,
whereas neighboring centers are at distance of order $\e$; the ratio of these two
lengths is $\delta_\e\to0$. The solid and fluid domains are
\begin{equation}\label{eq:domains}
	\O_s^\e:=\bigcup_{\kappa\in\Kc_\e}\Oc_{\e,\kappa},
	\qquad
	\O_p^\e:=\O\setminus\overline{\O_s^\e},
	\qquad
	\Gamma_s^\e:=\partial\O_s^\e
	=\bigcup_{\kappa\in\Kc_\e}\bigl(x_{\e,\kappa}+a_\e\,\partial\Oc\bigr),
\end{equation}
so that $\partial\O_p^\e=\partial\O\cup\Gamma_s^\e$ and
$\overline\O=\overline{\O_p^\e}\cup\overline{\O_s^\e}$.

Let $\e_0\in(0,1)$ be such that $\delta_{\e_0}R_{\Oc}\leq d_0/2$; since
$\e\mapsto\delta_\e$ is increasing on $(0,1)$ for $\alpha>1$, the same bound holds
for all $0<\e<\e_0$. For such $\e$ the rescaled obstacle
$Y_{s,\e}:=y_0+\delta_\e\Oc$ satisfies $\overline{Y_{s,\e}}\subset Y$ with
$\operatorname{dist}(\overline{Y_{s,\e}},\partial Y)\geq d_0-\delta_\e R_{\Oc}\geq
d_0/2$. Equivalently, $\overline{\Oc_{\e,\kappa}}\subset\e(\kappa+Y)$ with
\begin{equation}\label{eq:obstacle-separation}
	\operatorname{dist}\bigl(\Oc_{\e,\kappa},\partial(\e(\kappa+Y))\bigr)\geq c_0\e,
	\qquad c_0:=d_0/2,
\end{equation}
for all $\kappa\in\Kc_\e$. In particular, the obstacles are mutually disjoint,
uniformly separated at the periodicity scale, and do not meet $\partial\O$.
Hence, for every fixed $\e\in(0,\e_0)$, the set $\O_p^\e$ is a bounded Lipschitz
domain, being $\O$ with finitely many disjoint Lipschitz obstacles removed.

Two consequences of this construction enter the analysis. First, the total
obstacle volume satisfies
\begin{equation}\label{eq:obstacle-volume}
	|\O_s^\e|=\#\Kc_\e\,a_\e^3\,|\Oc|
	=O\bigl(\e^{3(\alpha-1)}\bigr)\longrightarrow0 ,
\end{equation}
so the perforation is asymptotically negligible in measure, and the
contribution of $\O_s^\e$ to the weak formulation vanishes in the quantitative
form of Lemma~\ref{lem:hole-error}. Second, the parameter governing the
cumulative obstacle-scale viscous energy is
\begin{equation}\label{eq:subcritical-parameter}
	\rho_\e:=\frac{a_\e}{\e^{3}}=\e^{\alpha-3}\longrightarrow0
	\qquad\text{since }\alpha>3 ,
\end{equation}
so the obstacles are in the subcritical regime. The strong convergence of the
solenoidal correction of Lemma~\ref{lem:solenoidal-restriction} to the identity
relies on \eqref{eq:subcritical-parameter}; \eqref{eq:obstacle-volume} alone
gives only convergence in $L^2$ (see Remark~\ref{rem:alpha-role}).

Throughout we write
\begin{equation}\label{eq:char-functions}
	\chi_p^\e:=\chi_{\O_p^\e},
	\qquad
	\chi_s^\e:=\chi_{\O_s^\e}=1-\chi_p^\e
\end{equation}
for the characteristic functions of the fluid and the solid part; we use the
same symbols for their time-independent extensions to $\O_T$. By
\eqref{eq:obstacle-volume}, for every $q\in[1,\infty)$,
\begin{equation}\label{eq:char-convergence}
	\|\chi_p^\e-1\|_{L^q(\O)}
	=\|\chi_s^\e\|_{L^q(\O)}
	=|\O_s^\e|^{1/q}
	\leq C\,\e^{3(\alpha-1)/q}
	\longrightarrow0 ,
\end{equation}
so $\chi_p^\e\to1$ strongly in $L^q(\O)$ for every $q<\infty$. Only the
hypothesis $\alpha>1$ is used here.
	
\subsection{Model assumptions}\label{SSec32}

We collect the assumptions used throughout the paper.

\textbf{Initial data.}
Let $\phi_0\in H^1(\O)$ and set $\phi_0^\e:=\phi_0|_{\O_p^\e}$. Moreover, let
$\Gu_0^\e\in H_\e^\sigma$ (see Section \ref{Sec02}) and let
$\overline\Gu_0^\e$ denote the zero extension of $\Gu_0^\e$ to $\O$. We assume
that there exists $\Gu_0\in H^\sigma$ such that
\begin{equation}\label{ICA01}
	\overline\Gu_0^\e\longrightarrow\Gu_0
	\quad\text{strongly in }L^2(\O)^3,
	\qquad
	\|\Gu_0^\e\|_{L^2(\O_p^\e)^3}\leq M_1,
	\qquad
	\|\phi_0\|_{H^1(\O)}\leq M_2,
\end{equation}
for constants $M_1, M_2>0$ independent of $\e$.

\textbf{Double-well potential.}
The double-well potential $F:\mathbb{R}\to\mathbb{R}$ is
\begin{equation}\label{PotentialF}
	F(s):=\tfrac{1}{4}(s^2-1)^2,\qquad s\in\mathbb{R},
\end{equation}
with derivative $F'(s)=s^3-s$. It satisfies
\begin{equation}\label{F01}
	\tfrac{1}{2}s^2-\tfrac{3}{4}\leq F(s)\leq\tfrac{1}{4}s^4+\tfrac{1}{4},
	\qquad
	|F'(s)|\leq|s|^3+|s|\leq C(1+|s|^3),
	\qquad s\in\mathbb{R}.
\end{equation}

\textbf{Source term.}
The source term $G:\mathbb{R}\to\mathbb{R}$ satisfies
\begin{equation}\label{AssG01}
	G\in C^1(\mathbb{R}),\qquad G(0)=0,\qquad
	0<c_1\leq G'(s)\leq c_2\quad\text{for all }s\in\mathbb{R},
\end{equation}
for constants $c_1,c_2>0$. Consequently, by the mean value theorem,
\begin{equation}\label{SourceGrowth}
	c_1|s|\leq|G(s)|\leq c_2|s|,
	\qquad
	c_1s^2\leq G(s)s\leq c_2s^2,
	\qquad s\in\mathbb{R}.
\end{equation}

\textbf{Reynolds number.}
The pore-scale Reynolds number $\Rey_\e$ in the convective term of
\eqref{MainP01} satisfies \eqref{ReynoldsAssumption}.

\subsection{Weak formulation}\label{SSec33}
The admissible spaces are
\[
\begin{aligned}
	\mathfrak{U}^\e&=L^\infty(S;L^2(\O_p^\e))^3\cap L^2(S;\GH^1_{\div}(\O_p^\e))\cap W^{1,4/3}(S;\GH^{-1}_{\div}(\O_p^\e)),\\
	\mathfrak{C}^\e&=L^\infty(S;H^1(\O_p^\e))\cap H^1(S;H^{-1}(\O_p^\e)),\qquad
	\mathfrak{W}^\e=L^2(S;H^1(\O_p^\e)),
\end{aligned}
\]
with macroscopic counterparts
\[
\begin{aligned}
	\mathfrak{U}&=L^\infty(S;L^2(\O))^3\cap L^2(S;H^1_{0,\div}(\O)^3)\cap C_w([0,T];H^\sigma),\\
	\mathfrak{C}&=L^\infty(S;H^1(\O))
	\cap
	C([0,T];L^2(\O)),\qquad
	\mathfrak{W}=L^2(S;H^1(\O)).
\end{aligned}
\]
\begin{definition}[Weak solution]\label{Def01}
	A triplet $(\mathbf u^\e,\phi^\e,\mu^\e)\in\mathfrak{U}^\e\times\mathfrak{C}^\e\times\mathfrak{W}^\e$ is called a weak solution of
	\eqref{MainP01} if
	$\mathbf u^\e(0)=\mathbf u_0^\e$ and $\phi^\e(0)=\phi^\e_0$ in $\O_p^\e$ are attained,
	and if the identities \eqref{weak-momentum1}--\eqref{weak-mu1} hold for all
	$\bvp_1\in L^4(S;\GH^1_{\div}(\O_p^\e))$ and
	$\varphi_2,\varphi_3\in L^2(S;H^1(\O_p^\e))$, together with the energy
	inequality \eqref{weak-energy1}:
	\begin{align}
		\label{weak-momentum1}
		&\int_S\!\langle\partial_t\mathbf{u}^\e,\bvp_1\rangle\,\md t
		+\Rey_\e\int_{S\times\O_p^\e} b^\e(\mathbf u^\e,\mathbf u^\e,\bvp_1)\,\md(x,t)
		+2\nu\int_{S\times\O_p^\e} D(\mathbf{u}^\e):D(\bvp_1)\,\md(x,t)\\
		&\hspace{5cm}
		+\int_{S\times\O_p^\e}\lambda\phi^\e\nabla\mu^\e\cdot\bvp_1\,\md(x,t)
		=\int_{S\times\O_p^\e}\th^\e\cdot\bvp_1\,\md(x,t),\nonumber\\[1ex]
		\label{weak-phi1}
		&\int_S\!\langle\partial_t\phi^\e,\varphi_2\rangle\,\md t
		+\int_{S\times\O_p^\e}\hskip -3mm G(\phi^\e)\varphi_2\,\md(x,t)
		+\int_{S\times\O_p^\e}\hskip -3mm\nabla\mu^\e\cdot\nabla\varphi_2\,\md(x,t)
		+\int_{S\times\O_p^\e}\hskip -3mm(\mathbf{u}^\e\cdot\nabla \phi^\e)\varphi_2\,\md(x,t)=0,\\[1ex]
		\label{weak-mu1}
		&\int_{S\times\O_p^\e}\mu^\e\varphi_3\,\md(x,t)
		=\int_{S\times\O_p^\e}\nabla\phi^\e\cdot\nabla\varphi_3\,\md(x,t)
		+\int_{S\times\O_p^\e}F'(\phi^\e)\varphi_3\,\md(x,t),
	\end{align}
	where the trilinear form of the convective term is written in the
	antisymmetric form
	\begin{equation}\label{eq:trilinear-form}
		b^\e(\mathbf v,\mathbf w,\mathbf z)
		:=-\,\mathbf v\otimes\mathbf w:\nabla\mathbf z
		=-\,v_iw_j\,\partial_iz_j .
	\end{equation}
	Setting
	\begin{equation}\label{energy-functional}
		\mathcal{E}^\e(t):=\frac12\int_{\O_p^\e}|\mathbf u^\e(t)|^2\,\md x
		+\lambda\int_{\O_p^\e}\Big(\frac12|\nabla\phi^\e(t)|^2+F(\phi^\e(t))\Big)\md x,
	\end{equation}
	the energy inequality reads
	\begin{multline}
		\label{weak-energy1}
		\mathcal{E}^\e(t)
		+2\nu\int_0^t\!\!\int_{\O_p^\e}D(\mathbf u^\e):D(\mathbf u^\e)\,\md(x,s)
		+\lambda\int_0^t\!\!\int_{\O_p^\e}|\nabla\mu^\e|^2\,\md(x,s)\\
		\le\ \mathcal{E}^\e(0)
		+\int_0^t\!\!\int_{\O_p^\e}\th^\e\cdot\mathbf u^\e\,\md(x,s)
		-\lambda\int_0^t\!\!\int_{\O_p^\e}G(\phi^\e)\mu^\e\,\md(x,s)
	\end{multline}
	for a.e.\ $t\in S$, where
	$\mathcal{E}^\e(0)=\frac12\|\mathbf u_0^\e\|_{L^2(\O_p^\e)}^2
	+\lambda\big(\frac12\|\nabla\phi_0\|_{L^2(\O_p^\e)}^2
	+\int_{\O_p^\e}F(\phi_0)\,\md x\big)$.
\end{definition}

\begin{remark}[Existence of microscopic weak solutions]
	\label{rem:existence}
	For fixed $\e\in(0,\e_0)$, the domain $\O_p^\e$ is a bounded
	Lipschitz domain. Hence, the perforations enter the existence argument
	only through the fixed geometry, the associated function spaces, and
	the constants in the estimates, which may depend on $\e$.
	For every $\th^\e\in L^2(\Qc_p^\e)^3$, problem \eqref{MainP01}
	admits at least one weak solution
	$$
	(\Gu^\e,\phi^\e,\mu^\e)
	\in \fU^\e\times\fC^\e\times\fW^\e
	$$
	in the sense of Definition~\ref{Def01}. Indeed, a Faedo--Galerkin
	approximation and the energy estimate \eqref{weak-energy1} yield
	bounds uniform with respect to the Galerkin dimension. These bounds
	ensure global continuation of the approximate solutions and provide
	the compactness required to pass to the limit in the transport,
	capillary, source, and potential terms.
	Existence results for local NSCH systems are established in
	\cite{boyer1999mathematical,abels2009diffuse,
		giorgini2019uniqueness}; the nonlocal case is treated in
	\cite{colli2012global}, while nonhomogeneous and degenerate-mobility
	models are considered in
	\cite{abels2013incompressible,giorgini2020weak}.
	Cahn--Hilliard equations with nonlinear source terms are studied in
	\cite{miranville2021source}; see also
	\cite{giorgini2026chemotaxis} for an NSCH system incorporating a
	phase-field mass source. A closely related existence result for a
	three-dimensional NSCH system with an oscillating viscosity tensor,
	a nonlinear source, and mixed no-slip/free-slip boundary conditions
	in a dilute perforated domain is established in
	\cite[Theorem~4.5]{chakrabortty2025navier}.
	No uniqueness of microscopic weak solutions is asserted.
\end{remark}

\begin{remark}\label{Rem-energy}
	For sufficiently smooth solutions, \eqref{weak-energy1} holds with equality.
	Indeed, taking $\bvp_1=\Gu^\e$ in \eqref{weak-momentum1},
	$\varphi_2=\lambda\mu^\e$ in \eqref{weak-phi1} and
	$\varphi_3=\lambda\partial_t\phi^\e$ in \eqref{weak-mu1}, and adding the
	resulting identities, the convective term drops and the two coupling terms cancel by
	$\div\Gu^\e=0$ and the boundary conditions in \eqref{MainP01},
	while the choice $\varphi_3=\lambda\partial_t\phi^\e$ turns the right-hand
	side of \eqref{weak-mu1} into
	$\lambda\frac{\md}{\md t}\big(\frac12\|\nabla\phi^\e\|_{L^2(\O_p^\e)}^2
	+\int_{\O_p^\e}F(\phi^\e)\,\md x\big)$.
	At the weak level $\partial_t\phi^\e$ is only an element of
	$L^2(S;H^{-1}(\O_p^\e))$ and is therefore not an admissible test function in
	\eqref{weak-mu1}. Accordingly, \eqref{weak-energy1} is imposed as part of the
	definition: it is obtained by writing the above identity for the solutions of a
	Galerkin approximation, for which the test function is
	admissible, and passing to the limit.
\end{remark}

\section{The control problems and main results}\label{SecMain}

Throughout this section, $\alpha>3$ and the assumptions \eqref{ICA01},
\eqref{AssG01} and \eqref{ReynoldsAssumption} hold. Let
$\beta>0$, and let $\mathbf u_d\in L^2(\O_T)^3$ and $\phi_d\in L^2(S;H^1(\O))$
be a prescribed velocity and order parameter. We pose the control problem at the
level $\e$ over control--state pairs (Section~\ref{SSecMicroOCP}), prove that it
has optimal pairs whose controls and states are bounded uniformly in $\e$
(Section~\ref{SSecExist}), identify the limits of such pairs by homogenizing the
state system (Section~\ref{SSecHomState}), pose the limit control problem on the
full domain, where the state system is uniquely solvable
(Section~\ref{SSecLimOCP}), and compare the two problems by sequential
$\Gamma$-convergence (Section~\ref{SSecGamma}).

\subsection{The microscopic control problem}\label{SSecMicroOCP}

For a control $\th^\e\in L^2(\Qc^\e_p)^3$ and a corresponding weak solution
$(\Gu^\e,\phi^\e,\mu^\e)$ of \eqref{MainP01} in the sense of
Definition~\ref{Def01}, we consider the cost functional
\begin{equation}\label{Cost01}
	\GJ^\e(\th^\e,\Gu^\e,\phi^\e):=
	\frac12\|\Gu^\e-\mathbf u_d\|^2_{L^2(\Qc^\e_p)}
	+\frac12\|\phi^\e-\phi_d\|^2_{L^2(S;H^1(\O^\e_p))}
	+\frac{\beta}{2}\|\th^\e\|^2_{L^2(\Qc^\e_p)} ,
\end{equation}
with the norm \eqref{H1PoreNorm}. The control is distributed over the pore space
and is not subject to pointwise constraints; the velocity is tracked in $L^2$
and the order parameter in $H^1$. Since uniqueness of weak solutions of
\eqref{MainP01} is not known, the control-to-state correspondence
\begin{equation}\label{MultiSolOp}
	\Sc^\e:\ L^2(\Qc^\e_p)^3 \ \rightrightarrows\
	\fU^\e\X\fC^\e\X\fW^\e,
	\qquad
	\Sc^\e(\th^\e):=\bigl\{(\Gu^\e,\phi^\e,\mu^\e)\ \text{weak solution of
		\eqref{MainP01} with body force }\th^\e\bigr\},
\end{equation}
is multivalued, and the map $\th^\e\mapsto\GJ^\e(\th^\e,\Sc^\e(\th^\e))$ is not a
function of $\th^\e$. The minimization is therefore carried out over the set of
admissible control--state pairs
\begin{equation}\label{AdmSetEps}
	\Ac^\e:=\Bigl\{\bigl(\th^\e,(\Gu^\e,\phi^\e,\mu^\e)\bigr):\
	\th^\e\in L^2(\Qc^\e_p)^3,\
	(\Gu^\e,\phi^\e,\mu^\e)\in\Sc^\e(\th^\e)\Bigr\},
\end{equation}
which is nonempty by Remark~\ref{rem:existence}, and the microscopic problem
reads
\begin{equation}\label{OPeps}\tag{$\mathrm{OP}^\e$}
	\text{minimize }\ \GJ^\e(\th^\e,\Gu^\e,\phi^\e)
	\quad\text{over all }\
	\bigl(\th^\e,(\Gu^\e,\phi^\e,\mu^\e)\bigr)\in\Ac^\e .
\end{equation}
Since $\Ac^\e$ is a relation and not a graph, \eqref{OPeps} selects a state
together with the control. Its value is
\begin{equation}\label{InfEps}
	\m_\e:=\inf\Bigl\{\GJ^\e(\th^\e,\Gu^\e,\phi^\e):\
	\bigl(\th^\e,(\Gu^\e,\phi^\e,\mu^\e)\bigr)\in\Ac^\e\Bigr\}\ \ge0 .
\end{equation}
To pass to the limit $\e\to0$, we first need optimal pairs, and bounds on them
that do not depend on $\e$.

\subsection{Existence and uniform bounds}\label{SSecExist}

The Tikhonov term bounds the controls of all pairs whose cost is close to the
infimum.

\begin{lemma}\label{LemBoundControl}
	There exist $C_0>0$ and $K>0$, both independent of $\e$, such that
	\begin{equation}\label{InfBound01}
		\m_\e\le C_0
		\qquad\text{for every }\e\in(0,\e_0),
	\end{equation}
	and such that every admissible pair
	$\bigl(\th^\e,(\Gu^\e,\phi^\e,\mu^\e)\bigr)\in\Ac^\e$ with
	$\GJ^\e(\th^\e,\Gu^\e,\phi^\e)\le C_0+1$ satisfies
	\begin{equation}\label{ControlBound01}
		\|\th^\e\|_{L^2(\Qc^\e_p)}\le K .
	\end{equation}
	In particular \eqref{ControlBound01} holds for every optimal control and, from
	some index on, for every term of any minimizing sequence.
\end{lemma}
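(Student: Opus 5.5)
The plan is to test with the zero control. By Remark~\ref{rem:existence}, for $\th^\e=0$ there is at least one weak solution $(\Gu^\e_\ast,\phi^\e_\ast,\mu^\e_\ast)\in\Sc^\e(0)$. Hence $\m_\e\le\GJ^\e(0,\Gu^\e_\ast,\phi^\e_\ast)$, and it suffices to bound
\[
\tfrac12\|\Gu^\e_\ast-\mathbf u_d\|^2_{L^2(\Qc^\e_p)}+\tfrac12\|\phi^\e_\ast-\phi_d\|^2_{L^2(S;H^1(\O^\e_p))}
\]
independently of $\e$. The targets are harmless: $\|\mathbf u_d\|_{L^2(\Qc^\e_p)}\le\|\mathbf u_d\|_{L^2(\O_T)}$ and $\|\phi_d\|_{L^2(S;H^1(\O^\e_p))}\le\|\phi_d\|_{L^2(S;H^1(\O))}$. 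So the whole task is an $\e$-uniform bound on $\|\Gu^\e_\ast\|_{L^\infty(S;L^2)}$ and $\|\phi^\e_\ast\|_{L^\infty(S;H^1(\O^\e_p))}$, read off from the energy inequality \eqref{weak-energy1}.

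For the energy bound, work with a general control $\th^\e$, since the same estimate is reused below.

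\textbf{Initial energy.} By \eqref{ICA01} and \eqref{F01}, $\mathcal E^\e(0)\le \tfrac12M_1^2+\lambda(\tfrac12M_2^2+\tfrac14\|\phi_0\|^4_{L^4(\O)}+\tfrac14|\O|)$. The $L^4$ norm is controlled on the fixed domain $\O$ by Sobolev's embedding $H^1(\O)\hookrightarrow L^4(\O)$, so no $\e$-dependence enters.

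\textbf{Control term.} $\int\th^\e\cdot\Gu^\e\le\tfrac12\|\th^\e\|^2_{L^2}+\tfrac12\int_0^t\|\Gu^\e\|^2_{L^2}$.

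\textbf{Source term (main obstacle).} The term $-\lambda\int G(\phi^\e)\mu^\e$ has no sign, and $\mu^\e$ is only controlled through $\nabla\mu^\e$. I would test \eqref{weak-mu1} with $\varphi_3=G(\phi^\e)\in H^1$, which is admissible since $G$ is Lipschitz. This gives
\[
\int G(\phi^\e)\mu^\e=\int G'(\phi^\e)|\nabla\phi^\e|^2+\int F'(\phi^\e)G(\phi^\e).
\]
The first term is $\ge0$ by \eqref{AssG01}. For the second, $F'(s)G(s)=(s^3-s)G(s)\ge c_1s^4-c_2s^2\ge -C$ by \eqref{SourceGrowth}. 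Hence $-\lambda\int_0^t\!\int G(\phi^\e)\mu^\e\le C\lambda\,t\,|\O|$.

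\textbf{Closing the estimate.} Gr\"onwall's inequality then gives
\[
\sup_t\mathcal E^\e(t)+2\nu\|D(\Gu^\e)\|^2+\lambda\|\nabla\mu^\e\|^2\le C\bigl(1+\|\th^\e\|^2_{L^2(\Qc^\e_p)}\bigr),
\]
with $C$ independent of $\e$. Using $F(s)\ge\tfrac12s^2-\tfrac34$ from \eqref{F01}, this yields $\|\phi^\e\|_{L^\infty(S;H^1(\O^\e_p))}+\|\Gu^\e\|_{L^\infty(S;L^2)}\le C(1+\|\th^\e\|)$. Taking $\th^\e=0$ defines $C_0$, which proves \eqref{InfBound01}.

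For \eqref{ControlBound01} no state estimate is needed. If $\GJ^\e(\th^\e,\Gu^\e,\phi^\e)\le C_0+1$, then the nonnegativity of the tracking terms gives $\tfrac\beta2\|\th^\e\|^2_{L^2(\Qc^\e_p)}\le C_0+1$. So we may take $K=\sqrt{2(C_0+1)/\beta}$.

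The final assertion follows from \eqref{InfBound01}. An optimal pair has cost $\m_\e\le C_0$. A minimizing sequence has costs tending to $\m_\e\le C_0$, so its costs are eventually $\le C_0+1$.

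The only delicate step is the sign control of the source term. If testing with $G(\phi^\e)$ were not admissible at the weak level, I would instead use the mean-value decomposition of $\mu^\e$, combining the Poincar\'e--Wirtinger inequality with a bound on $\int\mu^\e$ via $\int F'(\phi^\e)$. That route would require a uniform Poincar\'e constant on $\O^\e_p$, which is less convenient, so the $G(\phi^\e)$-test is preferred.
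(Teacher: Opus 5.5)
Your proposal is correct and follows the paper's route. You use $\th^\e=0$ as a competitor, bound the resulting state uniformly through the energy inequality \eqref{weak-energy1}, handling the source term by testing \eqref{weak-mu1} with $G(\phi^\e)$, and read off $K$ from the Tikhonov term. The only difference is that you re-derive the needed part of Lemma~\ref{Est01} inline, closing with Gr\"onwall (unnecessary when $\th^\e=0$) instead of the uniform Korn inequality; this is a harmless and slightly more economical variant.
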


\begin{theorem}\label{ThmExistOPe}
	For every $\e\in(0,\e_0)$ the problem \eqref{OPeps} admits at least one
	solution; in particular the infimum $\m_\e$ of \eqref{InfEps} is attained.
\end{theorem}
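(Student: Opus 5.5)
We use the direct method of the calculus of variations at fixed $\e\in(0,\e_0)$, with the constants allowed to depend on $\e$. By Remark~\ref{rem:existence}, $\Ac^\e\neq\emptyset$, so $\m_\e<\infty$. Take a minimizing sequence $\bigl(\th_n,(\Gu_n,\phi_n,\mu_n)\bigr)\in\Ac^\e$ with $\GJ^\e(\th_n,\Gu_n,\phi_n)\to\m_\e$. By Lemma~\ref{LemBoundControl}, from some index on $\|\th_n\|_{L^2(\Qc^\e_p)}\le K$, so after passing to a subsequence $\th_n\rightharpoonup\th$ weakly in $L^2(\Qc^\e_p)^3$.

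\textbf{Uniform bounds on the states.} We derive bounds on the states from the energy inequality \eqref{weak-energy1}. The control term is handled by Young's inequality and Korn's inequality on $\O_p^\e$, which holds with an $\e$-dependent constant since $\Gu=0$ on $\partial\O$. The source term $-\lambda\int G(\phi)\mu$ needs more care. We write $\mu$ via \eqref{weak-mu1} as $\mu=\bar\mu+(\mu-\bar\mu)$. The oscillating part is controlled by Poincaré--Wirtinger and absorbed into $\lambda\|\nabla\mu\|^2$, using $|G(\phi)|\le c_2|\phi|$. For the mean, testing \eqref{weak-mu1} with $\varphi_3=1$ gives $|\bar\mu|\le C\int|F'(\phi)|$. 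The product $G(\phi)\bar\mu$ is then bounded by $\|\phi\|_{L^2}(1+\|\phi\|_{L^3}^3)$, and $H^1\subset L^6$ bounds this in terms of the energy, but the growth is superlinear. We expect this to be the main obstacle. The first remedy to try is testing \eqref{weak-phi1} with $\varphi_2=\phi$ and \eqref{weak-mu1} with $\varphi_3=\phi$ to get an additional $L^2$ estimate. The cross term $\int\mu G(\phi)=\int\nabla\phi\cdot G'(\phi)\nabla\phi+\int F'(\phi)G(\phi)$ is formally nonnegative up to lower order, since $G'>0$ and $F'(s)G(s)\ge c_1 s^4-c_2 s^2$. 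It should therefore be used with its good sign, which yields a Gronwall estimate. If this step is delicate, we rely on the bounds already produced in Section~\ref{Sec04} (a priori estimates), which hold uniformly for $\|\th\|\le K$.

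The result is that $\Gu_n$ is bounded in $L^\infty(S;L^2)\cap L^2(S;\GH^1_{\div})$, $\phi_n$ in $L^\infty(S;H^1)$, and $\mu_n$ in $L^2(S;H^1)$. Comparison in the equations, using $\Rey_\e b^\e$ and $\phi\nabla\mu$ with $\phi\in L^\infty(L^6)$, gives bounds on $\partial_t\Gu_n$ in $L^{4/3}(S;\GH^{-1}_{\div}(\O_p^\e))$ and on $\partial_t\phi_n$ in $L^2(S;H^{-1}(\O_p^\e))$.

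\textbf{Passage to the limit.} Since $\e$ is fixed and $\O_p^\e$ is a bounded Lipschitz domain, the Aubin--Lions--Simon lemma applies. It gives strong convergence of $\Gu_n$ in $L^2(S;L^2)$ and of $\phi_n$ in $C([0,T];L^q)$ for $q<6$ and in $L^2(S;H^{1-s})$, together with weak/weak-$*$ convergence in all the spaces above. These convergences let us pass to the limit in every nonlinear term: $b^\e(\Gu_n,\Gu_n,\bvp_1)$ by strong$\times$strong$\times$fixed convergence, $\phi_n\nabla\mu_n$ by strong$\times$weak convergence, $\Gu_n\cdot\nabla\phi_n$, $F'(\phi_n)$ by strong convergence in $L^2$ together with the growth bound \eqref{F01}, and $G(\phi_n)$ by the Lipschitz property. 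The initial data pass to the limit by weak continuity in time.

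The energy inequality \eqref{weak-energy1} passes to the limit as follows. On the left-hand side we use weak lower semicontinuity of the norms and the a.e.\ convergence of $\phi_n(t)$, combined with Fatou's lemma for $F$. On the right-hand side, $\int\th_n\cdot\Gu_n$ converges as weak$\times$strong, and $\int G(\phi_n)\mu_n$ converges as strong$\times$weak. It follows that $(\th,(\Gu,\phi,\mu))\in\Ac^\e$.

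\textbf{Optimality.} Finally, $\GJ^\e$ is weakly lower semicontinuous on these convergences: it is a sum of squared norms, with $\Gu_n\rightharpoonup\Gu$ in $L^2$, $\phi_n\rightharpoonup\phi$ in $L^2(S;H^1)$ and $\th_n\rightharpoonup\th$. Hence $\GJ^\e(\th,\Gu,\phi)\le\m_\e$, so the infimum is attained and $(\th,(\Gu,\phi,\mu))$ is an optimal pair.
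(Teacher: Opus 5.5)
Your proposal is correct and follows the paper's proof: a minimizing sequence whose controls are bounded by Lemma~\ref{LemBoundControl}, state bounds from Lemma~\ref{Est01}, Aubin--Lions--Simon compactness at fixed $\e$, passage to the limit in \eqref{weak-momentum1}--\eqref{weak-mu1}, in the initial data and in the energy inequality, and weak lower semicontinuity of $\GJ^\e$. In Lemma~\ref{Est01} the source-term identity you write comes from testing \eqref{weak-mu1} with $\varphi_3=G(\phi)$ rather than with $\phi$, and the bound then closes directly, with no Gr\"onwall step. Two details need tightening: the velocity compactness should use the triple $\GH^1_{\div}(\O_p^\e)\Subset H^\sigma_\e\hookrightarrow\GH^{-1}_{\div}(\O_p^\e)$, since $L^2(\O_p^\e)^3$ does not embed injectively into $\GH^{-1}_{\div}(\O_p^\e)$ (gradients lie in the kernel); and the convective term, tested against fields that are only $L^4$ in time, needs the bound of $\Gu_n\otimes\Gu_n$ in $L^{4/3}(S;L^2(\O_p^\e))$, coming from $L^\infty(S;L^2)\cap L^2(S;H^1)\hookrightarrow L^{8/3}(S;L^4)$, to turn strong $L^1(S;L^2)$ convergence into weak $L^{4/3}(S;L^2)$ convergence.
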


Controls bounded in $L^2$ give states bounded uniformly in $\e$, in the
$\e$-dependent spaces.

\begin{lemma}\label{Est01}
	Let $(\Gu^\e,\phi^\e,\mu^\e)\in \fU^\e\X\fC^\e\X\fW^\e$ be a weak solution of
	\eqref{MainP01} in the sense of Definition~\ref{Def01}, and assume that
	$\|\th^\e\|_{L^2(\Qc^\e_p)}\le C$ with $C$ independent of $\e$. Then
	\begin{equation}\label{MainE01}
		\begin{aligned}
			\|\Gu^\e\|_{L^\infty(S;L^2(\O^\e_p))}
			+\|\Gu^\e\|_{L^2(S;H^1(\O^\e_p))}
			+\|\partial_t\Gu^\e\|_{L^{4/3}(S;\GH_{\div}^{-1}(\O^\e_p))}
			&\le C,\\[0.2cm]
			\|\phi^\e\|_{L^\infty(S;H^1(\O^\e_p))}
			+\|\partial_t\phi^\e\|_{L^2(S;H^{-1}(\O^\e_p))}
			+\|\mu^\e\|_{L^2(S;H^1(\O^\e_p))}
			&\le C,
		\end{aligned}
	\end{equation}
	with $C>0$ independent of $\e$.
\end{lemma}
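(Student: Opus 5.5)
The plan is to start from the energy inequality \eqref{weak-energy1}, which is part of Definition~\ref{Def01}, and turn it into the bounds \eqref{MainE01} using only constants that do not depend on $\e$. Two tools are needed for this, and I take both from Section~\ref{Sec04}: a uniform Korn inequality on $\GH^1(\O_p^\e)$, which gives $\|\nabla\bv\|_{L^2(\O_p^\e)}\le C\|D(\bv)\|_{L^2(\O_p^\e)}$ from $\bv=0$ on $\partial\O$, for instance through the extension operator $\Ec^\e$ of Lemma~\ref{LemD02}, and uniform Sobolev/Poincar\'e inequalities on $H^1(\O_p^\e)$, which follow from $E^\e$ of Lemma~\ref{Emain}. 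With these, the energy bounds become the $L^\infty L^2$ and $L^2H^1$ bounds on $\Gu^\e$ and $\phi^\e$.

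\textbf{Step 1 (energy).} First I bound $\mathcal E^\e(0)$ uniformly. By \eqref{ICA01} and \eqref{F01}, $\int_{\O_p^\e}F(\phi_0)\le C(1+\|\phi_0\|_{H^1(\O)}^4)$, using $H^1(\O)\hookrightarrow L^4(\O)$ on the fixed domain $\O$. Next I handle the control term with $\int\th^\e\cdot\Gu^\e\le\frac12\|\th^\e\|^2+\frac12\int_0^t\|\Gu^\e\|^2$. The source term is the delicate one. I rewrite it as $-\lambda\int G(\phi^\e)\mu^\e=-\lambda\int G(\phi^\e)\big(\mu^\e-\langle\mu^\e\rangle\big)-\lambda|\O_p^\e|\langle G(\phi^\e)\rangle\langle\mu^\e\rangle$. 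The first piece is controlled by the uniform Poincar\'e--Wirtinger inequality and \eqref{SourceGrowth}: $\le\frac\lambda2\|\nabla\mu^\e\|^2+C\|\phi^\e\|^2$. For the mean, I test \eqref{weak-mu1} with $\varphi_3=1$, which gives $\langle\mu^\e\rangle=\langle F'(\phi^\e)\rangle$, and therefore
\[
|\langle\mu^\e\rangle|\le C\bigl(1+\|\phi^\e\|_{L^3}^3\bigr)\le C\bigl(1+\|\phi^\e\|_{H^1}^3\bigr).
\]
This cubic growth, multiplied by $|\langle G(\phi^\e)\rangle|\le C\|\phi^\e\|$, is too strong for Gr\"onwall. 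I would therefore use the sign structure instead: pointwise $G(s)F'(s)=G(s)(s^3-s)\ge c_1s^4-c_2 s^2$, since $G(s)s\ge c_1s^2$ and $s^2\ge0$. The natural move is to keep the full term $-\lambda\int G(\phi^\e)\mu^\e=-\lambda\int G'(\phi^\e)|\nabla\phi^\e|^2-\lambda\int G(\phi^\e)F'(\phi^\e)$, formally testing \eqref{weak-mu1} with $G(\phi^\e)$. This is admissible because $G$ is Lipschitz, so $G(\phi^\e)\in L^2(S;H^1)$. The result is
\[
-\lambda\int G(\phi^\e)\mu^\e\le-\lambda c_1\|\nabla\phi^\e\|^2-\lambda c_1\|\phi^\e\|_{L^4}^4+C\|\phi^\e\|^2\le C\bigl(1+\mathcal E^\e\bigr)
\]
by \eqref{F01}. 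This makes the mean decomposition unnecessary, and it is the step I expect to be the main obstacle to get exactly right. Combining these with Korn and Gr\"onwall gives uniform bounds on $\sup_t\mathcal E^\e$, on $\|D\Gu^\e\|_{L^2}$ and on $\|\nabla\mu^\e\|_{L^2}$, and hence on the $L^\infty L^2$, $L^2H^1$ and $L^\infty H^1$ norms, since $F\ge\frac12s^2-\frac34$. The full $\|\mu^\e\|_{L^2H^1}$ bound then follows from Poincar\'e--Wirtinger together with $|\langle\mu^\e\rangle|\le C(1+\|\phi^\e\|_{H^1}^3)$, which is bounded in $L^\infty(S)$.

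\textbf{Step 2 (time derivatives).} For $\partial_t\phi^\e$, I bound each term of \eqref{weak-phi1} against $\varphi_2\in H^1(\O_p^\e)$. For the transport term, $|\int\Gu^\e\cdot\nabla\phi^\e\varphi_2|\le\|\Gu^\e\|_{L^3}\|\nabla\phi^\e\|_{L^2}\|\varphi_2\|_{L^6}$, and $\|\Gu^\e\|_{L^3}\le C\|\Gu^\e\|_{H^1}$ is in $L^2(S)$ by the uniform Sobolev embedding. The $\mu^\e$ and $G$ terms are in $L^2(S)$ directly. This gives the $L^2(S;H^{-1})$ bound. For $\partial_t\Gu^\e$, I bound the terms of \eqref{weak-momentum1} against $\bvp_1\in\GH^1_\div$. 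The convective term satisfies $\Rey_\e\|\Gu^\e\|_{L^4}^2\le C\|\Gu^\e\|_{L^2}^{1/2}\|\Gu^\e\|_{H^1}^{3/2}\in L^{4/3}(S)$ by uniform Gagliardo--Nirenberg and $\Rey_\e\le1$. The capillary term satisfies $\|\phi^\e\|_{L^6}\|\nabla\mu^\e\|_{L^2}\|\bvp_1\|_{L^3}\in L^2(S)$, and the viscous and control terms are in $L^2(S)$. Together these give the $L^{4/3}(S;\GH^{-1}_\div)$ bound. All constants in this step come from the uniform inequalities of Section~\ref{Sec04}.
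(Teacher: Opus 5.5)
Your proof is correct and follows the paper's route: you test \eqref{weak-mu1} with $G(\phi^\e)$ so that the source term in \eqref{weak-energy1} is controlled by the sign of $G(s)F'(s)$, bound the time derivatives by duality with the uniform Sobolev and interpolation inequalities, and recover $\|\mu^\e\|_{L^2(S;H^1)}$ from its mean via $\varphi_3=1$. The paper instead uses the lower bound $c_1(s^2-\tfrac12)^2-\tfrac{c_1+c_2}{4}$ and absorbs the force term through the uniform Korn--Poincar\'e inequality rather than Gr\"onwall, which is an inessential difference. One correction: the uniform Korn inequality does not follow from $\Ec^\e$ of Lemma~\ref{LemD02}, which is bounded only in terms of the full gradient; it comes from the operator $\mathscr{E}^\e$ with the symmetric-gradient bound \eqref{eq:extension-korn}, as recorded in \eqref{eq:Poincare-Korn}$_2$.
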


By Lemma~\ref{LemBoundControl}, Lemma~\ref{Est01} applies to optimal pairs and
to all pairs with bounded cost. We now identify the limits of such pairs.

\subsection{Homogenization of the state system}\label{SSecHomState}

The solutions live on the $\e$-dependent domain $\O_p^\e$, so limits are taken
through appropriate extensions to $\O$. Each field is extended in two ways. The primal
extension $\wt w$ of \eqref{eq:extensions-notation}, a Sobolev extension for the
scalar fields (Lemma \ref{lem:scalar-two-extensions}) and a solenoidal extension for the velocity (Lemma \ref{LemD02}), is bounded uniformly
in the energy space and carries the weak limits of the gradients. The zero
extension $\overline w$ carries the distributional time derivative and, for the
velocity, the divergence-free constraint. The two extensions differ in $L^2$ by
a term of order $a_\e/\e$, so they have the same $L^2$ limit. For the velocity,
$\partial_t\Gu^\e$ is bounded only in the dual of an $\e$-dependent space, and a
fixed divergence-free field on $\O$ is not an admissible test function, since it
does not satisfy the impermeability condition on $\Gamma_s^\e$. The solenoidal
restriction operator $\Rc^\e$ of Lemma~\ref{lem:solenoidal-restriction} provides
admissible test fields; combined with the two extensions, the pairings of
$\Gu^\e$ with these fields give strong compactness of the velocity and identify
the initial conditions.

\begin{lemma}[Compactness of the solution sequence]
	\label{lem:limit-compactness}
	Let $\th^\e\in L^2(\Qc^\e_p)^3$ satisfy
	$\|\th^\e\|_{L^2(\Qc^\e_p)}\leq C$, and let
	$(\Gu^\e,\phi^\e,\mu^\e)\in\fU^\e\times\fC^\e\times\fW^\e$ be a weak
	solution of \eqref{MainP01} with body force $\th^\e$ in the sense of
	Definition~\ref{Def01}. Then, up to a subsequence, there exists
	$(\Gu,\phi,\mu)\in\fU\times\fC\times\fW$ such that the following assertions
	hold.
	
	\textbf{(i) Convergence of the extensions.}
	For every $2\leq s<6$,
	\begin{equation}\label{TSL0}
		\begin{aligned}
			\wt\Gu^\e,\ \overline{\Gu^\e}&\to\Gu
			\quad\text{strongly in }L^2(S;L^s(\O)^3),
			&&\nabla\wt\Gu^\e,\ \overline{\nabla\Gu^\e}\rightharpoonup\nabla\Gu
			\quad\text{weakly in }L^2(\O_T)^{3\times3},\\
			\wt\phi^\e,\ \overline{\phi^\e}&\to\phi
			\quad\text{strongly in }C([0,T];L^s(\O)),
			&&\nabla\wt\phi^\e,\ \overline{\nabla\phi^\e}\rightharpoonup\nabla\phi
			\quad\text{weakly in }L^2(\O_T)^3,\\
			\wt\mu^\e,\,\overline{\mu^\e}&\rightharpoonup\mu
			\quad\text{weakly in }L^2(\O_T),\quad&&
			\nabla\wt\mu^\e,\ \overline{\nabla\mu^\e}\rightharpoonup\nabla\mu
			\quad\text{weakly in }L^2(\O_T)^3 .
		\end{aligned}
	\end{equation}
	Moreover, for every
	$\zeta\in C^\infty_{c,\div}(\O)^3$,
	\begin{equation}\label{TSL-pairing}
		(\Gu^\e(\cdot),\Rc^\e\zeta)_{L^2(\O_p^\e)}\longrightarrow
		(\Gu,\zeta)_{L^2(\O)}\qquad\text{strongly in }C([0,T]).
	\end{equation}
	
	\textbf{(ii) Nonlinear terms.}
	For every $1\leq r<3/2$,
	\begin{equation}\label{TSL02}
		\begin{aligned}
			\wt\phi^\e\,\wt\Gu^\e&\to\phi\Gu\quad
			\text{strongly in }L^2(\O_T)^3,\quad
			&&F(\wt\phi^\e)\to F(\phi)\quad\text{strongly in }L^1(\O_T),\\
			F'(\wt\phi^\e)&\rightharpoonup F'(\phi)\quad\text{weakly in }L^2(\O_T),\quad
			&&G(\wt\phi^\e)\to G(\phi)\quad\text{strongly in }L^2(\O_T),\\
			\wt\phi^\e\,\nabla\wt\mu^\e&\rightharpoonup\phi\nabla\mu\quad\text{weakly in }L^2(S;L^r(\O)^3).
		\end{aligned}
	\end{equation}
	
	\textbf{(iii) Identification of the initial conditions.}
	The limit fields satisfy
	\begin{equation}\label{eq:limit-initial-conditions}
		\Gu(0)=\Gu_0\quad\text{in }H^\sigma,
		\qquad
		\phi(0)=\phi_0\quad\text{in }L^2(\O).
	\end{equation}
\end{lemma}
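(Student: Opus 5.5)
The plan is to combine the uniform bounds of Lemma~\ref{Est01} with the extension operators: the Sobolev extension $E^\e$ for scalars (Lemma~\ref{lem:scalar-two-extensions}) and the solenoidal extension $\Ec^\e$ for the velocity (Lemma~\ref{LemD02}). We also use the hole estimate of Lemma~\ref{lem:hole-error} and the solenoidal restriction $\Rc^\e$ of Lemma~\ref{lem:solenoidal-restriction}. We first collect weak limits, then upgrade to strong convergence field by field, and finally treat nonlinearities and initial data.

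\textbf{Step 1: weak limits.} By \eqref{MainE01} and the uniform bounds of the extensions, the families $\wt\Gu^\e$, $\wt\phi^\e$, $\wt\mu^\e$ are bounded in $L^2(S;H^1(\O))$. In addition, $\wt\phi^\e$ is bounded in $L^\infty(S;H^1(\O))$ and $\wt\Gu^\e$ in $L^\infty(S;L^2)$. After extracting a subsequence we obtain weak limits $\Gu,\phi,\mu$. The zero extensions satisfy $\overline w-\wt w=-\chi_s^\e\wt w$. By H\"older's inequality, \eqref{eq:char-convergence} and the embedding $H^1\subset L^6$, this difference tends to $0$ in $L^2(S;L^s)$ for $s<6$. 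The same holds for the gradients, since $\overline{\nabla w}-\nabla\wt w=-\chi_s^\e\nabla\wt w$ is bounded in $L^2$ and supported on a set of vanishing measure, so it converges weakly to $0$. Hence both extensions have the same limits. Divergence-freeness of $\overline{\Gu^\e}$ on $\O$ follows because $\Gu^\e\cdot\Gn^\e=0$ on $\Gamma_s^\e$ and $\Gu^\e=0$ on $\partial\O$. Together with $\nabla\Gu\in L^2$, this places $\Gu\in L^2(S;H^1_{0,\div}(\O)^3)$.

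\textbf{Step 2: strong compactness of $\phi$.} For $\varphi\in H^1(\O)$ we test \eqref{weak-phi1} with $\varphi|_{\O_p^\e}$. This bounds $\partial_t\overline{\phi^\e}$ in $L^2(S;H^{-1}(\O))$, with $H^{-1}=(H^1)^*$, uniformly in $\e$. Aubin--Lions--Simon applied to the zero extension does not work directly, because $\overline{\phi^\e}\notin H^1$. We therefore use the equicontinuity estimate
\[
\|\overline{\phi^\e}(t+h)-\overline{\phi^\e}(t)\|_{H^{-1}}\le Ch^{1/2}
\]
together with $\|\wt\phi^\e-\overline{\phi^\e}\|_{L^\infty(S;L^2)}\to0$ and the compactness of $H^1\Subset L^s$. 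By an Arzel\`a--Ascoli/Simon argument in $C([0,T];H^{-1})$ and interpolation between $H^{-1}$ and $H^1$, this yields $\wt\phi^\e\to\phi$ in $C([0,T];L^s)$ for $s<6$.

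\textbf{Step 3: velocity, the main obstacle.} The bound on $\partial_t\Gu^\e$ holds only in $\GH^{-1}_{\div}(\O_p^\e)$, so Step 2 cannot be copied. Fix $\zeta\in C^\infty_{c,\div}(\O)^3$. Then $\Rc^\e\zeta\in\GH^1_{\div}(\O_p^\e)$ is admissible in \eqref{weak-momentum1}, and $\|\Rc^\e\zeta\|_{H^1}\le C$. This gives
\[
|g_\e(t)-g_\e(s)|\le C\int_s^t\|\partial_t\Gu^\e\|_{\GH^{-1}_\div}\le C|t-s|^{1/4}, \qquad g_\e(t):=(\Gu^\e(t),\Rc^\e\zeta).
\]
By Arzel\`a--Ascoli, $g_\e$ is compact in $C([0,T])$. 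Its limit is identified through $\Rc^\e\zeta\to\zeta$ strongly in $H^1$, which uses $\alpha>3$, and through the weak-$*$ convergence of $\overline{\Gu^\e}$. This proves \eqref{TSL-pairing} and also gives $\Gu\in C_w([0,T];H^\sigma)$. Strong $L^2(\O_T)$ convergence then follows from a Lions-type lemma. For a finite family $\zeta_1,\dots,\zeta_N$ spanning a good approximation of $H^\sigma$, together with a bound on $\nabla\wt\Gu^\e$ in $L^2$, one has
\[
\|\overline{\Gu^\e}-\Gu\|_{L^2(\O_T)}\le \eta\,\|\nabla(\wt\Gu^\e-\Gu)\|_{L^2}+C_\eta\sum_i\|g^i_\e-(\Gu,\zeta_i)\|_{L^2(S)}+o(1).
\]
Here $\eta$ is small, and the $o(1)$ term collects the hole contributions. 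We would set this up via the spectral projection of the Stokes operator on $\O$ applied to $\wt\Gu^\e$, which is solenoidal by Lemma~\ref{LemD02}, while estimating $\|\Rc^\e\zeta-\zeta\|$. Interpolation with the $L^2(S;L^6)$ bound then gives convergence in $L^2(S;L^s)$.

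\textbf{Step 4: nonlinear terms and initial data.} The terms $\wt\phi^\e\wt\Gu^\e$ and $G(\wt\phi^\e)$ converge strongly by Steps 2--3 and the Lipschitz bound \eqref{SourceGrowth}. $F(\wt\phi^\e)\to F(\phi)$ in $L^1$ by the $L^\infty(L^4)$ convergence. $F'(\wt\phi^\e)$ is bounded in $L^\infty(L^2)$ and converges a.e., so it converges weakly to $F'(\phi)$. For the product $\wt\phi^\e\nabla\wt\mu^\e$, we combine strong convergence of $\wt\phi^\e$ in $L^\infty(L^q)$, $q<6$, with weak convergence of $\nabla\wt\mu^\e$ in $L^2$, giving weak convergence in $L^2(L^r)$ for $r<3/2$. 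For the initial data, $\phi(0)=\phi_0$ follows from convergence in $C([0,T];L^2)$ and $\phi^\e_0=\phi_0|_{\O_p^\e}$. Evaluating \eqref{TSL-pairing} at $t=0$ and using \eqref{ICA01} gives $(\Gu(0),\zeta)=(\Gu_0,\zeta)$ for all $\zeta$, hence $\Gu(0)=\Gu_0$ in $H^\sigma$.
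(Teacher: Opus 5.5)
Your proposal follows the paper's proof essentially step for step. Both use the bounds of Lemma~\ref{Est01} together with the primal and zero extensions. For $\phi$, both combine equicontinuity in $H^{-1}(\O)$ with interpolation. For the velocity, both bound the pairings against $\Rc^\e\zeta$ in $W^{1,4/3}(S)$ and apply Arzel\`a--Ascoli, then upgrade to strong $L^2(\O_T)$ convergence by splitting into finitely many modes plus a uniformly small tail. The nonlinear terms and the initial data are handled the same way.

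The one real variation is your use of the Stokes spectral projection. Its eigenfunctions do not lie in $C^\infty_{c,\div}(\O)^3$, the domain of $\Rc^\e$, so they would first have to be approximated by such fields. The paper avoids this by taking an orthonormal basis of $H^\sigma$ inside $C^\infty_{c,\div}(\O)^3$ and getting the uniform tail bound from compactness of the unit ball of $H^1_{0,\div}(\O)^3$ in $L^2$.

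A minor point: the uniform $L^\infty(S;L^2)$ bound holds for $\overline{\Gu^\e}$, but Lemma~\ref{LemD02} does not give it for $\wt\Gu^\e$. This is harmless, since your argument only uses the bound for $\overline{\Gu^\e}$.
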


The limit is identified for forces that converge weakly. On the body force we
assume that $\th^\e\in L^2(\Qc^\e_p)^3$ satisfies
\begin{equation}\label{ForceConvergence}
	\|\th^\e\|_{L^2(\Qc^\e_p)}\leq C
	\qquad\text{and}\qquad
	\overline{\th^\e}\rightharpoonup\th_0
	\quad\text{weakly in }L^2(\O_T)^3
\end{equation}
for some $\th_0\in L^2(\O_T)^3$, where $\overline{\th^\e}$ denotes the zero
extension of $\th^\e$ to $\O_T$.

	\begin{theorem}[Homogenized system]
	\label{ThHomogenized}\label{ThCritical}
	Assume \eqref{ForceConvergence} and let $(\Gu,\phi,\mu)$ be the limit
	fields obtained in Lemma~\ref{lem:limit-compactness}. Then
	$(\Gu,\phi,\mu)\in\fU\X\fC\X\fW$ with
	\begin{equation}\label{LimitTimeRegularity}
		\partial_t\Gu\in L^2\bigl(S;H^{-1}_{0,\div}(\O)^3\bigr),
		\qquad
		\partial_t\phi\in L^2\bigl(S;H^{-1}(\O)\bigr),
	\end{equation}
	and $(\Gu,\phi,\mu)$ is a weak solution of the Stokes--Cahn--Hilliard
	system on $\O$ in the sense that
	\begin{align}
		\int_S\langle\partial_t\Gu,\bvp_1\rangle\,\md t
		+
		2\nu\int_{\O_T}D\Gu:D\bvp_1\,\md(x,t)
		+
		\lambda\int_{\O_T}\phi\,\nabla\mu\cdot\bvp_1\,\md(x,t)
		=
		\int_{\O_T}\th_0\cdot\bvp_1\,\md(x,t),
		\label{HomogenizedMomentum}\\[1mm]
		\int_S\langle\partial_t\phi,\varphi_2\rangle\,\md t
		-
		\int_{\O_T}\phi\,\Gu\cdot\nabla\varphi_2\,\md(x,t)
		+
		\int_{\O_T}\nabla\mu\cdot\nabla\varphi_2\,\md(x,t)
		+
		\int_{\O_T}G(\phi)\varphi_2\,\md(x,t)
		=0,
		\label{HomogenizedPhase}\\[1mm]
		\int_{\O_T}\mu\varphi_3\,\md(x,t)
		-
		\int_{\O_T}\nabla\phi\cdot\nabla\varphi_3\,\md(x,t)
		-
		\int_{\O_T}F'(\phi)\varphi_3\,\md(x,t)
		=0,
		\label{HomogenizedPotential}
	\end{align}
	for every $\bvp_1\in L^2(S;H^1_{0,\div}(\O)^3)$ and every
	$\varphi_2,\varphi_3\in L^2(S;H^1(\O))$, with the initial conditions
	\eqref{eq:limit-initial-conditions}. Moreover, the energy estimate
	\begin{multline}
		\label{weak-energy2}
		\mathcal{E}(t)
		+2\nu\int_0^t\!\!\int_{\O}D(\Gu):D(\Gu)\,\md(x,s)
		+\lambda\int_0^t\!\!\int_{\O}|\nabla\mu|^2\,\md(x,s)\\
		\le\ \mathcal{E}(0)
		+\int_0^t\!\!\int_{\O}\th_0\cdot\Gu\,\md(x,s)
		-\lambda\int_0^t\!\!\int_{\O}G(\phi)\mu\,\md(x,s)
	\end{multline}
	holds for a.e.\ $t\in S$, where
	\[
	\mathcal{E}(t)
	:=\frac12\|\Gu(t)\|_{L^2(\O)}^2
	+\lambda\Bigl(\frac12\|\nabla\phi(t)\|_{L^2(\O)}^2
	+\int_{\O}F(\phi(t))\,\md x\Bigr),
	\]
	and $\mathcal{E}(0)=\frac12\|\Gu_0\|_{L^2(\O)}^2
	+\lambda\bigl(\frac12\|\nabla\phi_0\|_{L^2(\O)}^2
	+\int_{\O}F(\phi_0)\,\md x\bigr)$.
\end{theorem}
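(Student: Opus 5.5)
We pass to the limit in the microscopic weak formulation \eqref{weak-momentum1}--\eqref{weak-mu1}, using test functions built from fixed fields on $\O$. The convergences available are those of Lemma~\ref{lem:limit-compactness}. Time is localized by multiplying spatial test fields with $\eta\in C^\infty_c(S)$, or with $\eta\in C^1([0,T])$ satisfying $\eta(T)=0$ when the initial data are involved.

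\textbf{Phase and potential equations.} Fix $\varphi_2,\varphi_3\in C^1([0,T];C^1(\overline\O))$ and restrict them to $\O_p^\e$; these are admissible test functions. Integrating by parts in time turns $\int_S\langle\partial_t\phi^\e,\varphi_2\rangle$ into $-\int\overline{\phi^\e}\partial_t\varphi_2$ plus the initial term $(\phi_0,\varphi_2(0))_{L^2(\O_p^\e)}$, which converges by \eqref{eq:char-convergence}. The convective term becomes $-\int\overline{\phi^\e\Gu^\e}\cdot\nabla\varphi_2$, since $\div\Gu^\e=0$, $\Gu^\e=0$ on $\partial\O$ and $\Gu^\e\cdot\Gn^\e=0$ on $\Gamma_s^\e$. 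It converges because $\wt\phi^\e\wt\Gu^\e\to\phi\Gu$ strongly by \eqref{TSL02}, and the discrepancy on $\O_s^\e$ is $\|\wt\phi^\e\wt\Gu^\e\|_{L^2}\|\chi_s^\e\nabla\varphi_2\|_{L^2}\to0$. The terms $\overline{\nabla\mu^\e}$, $\overline{\mu^\e}$, $\overline{\nabla\phi^\e}$ converge weakly, and $G,F'$ converge strongly or weakly by \eqref{TSL02}; for $\chi_p^\e F'(\wt\phi^\e)$ the weak limit persists because $\chi_s^\e\varphi_3\to0$ strongly in $L^2$. This gives \eqref{HomogenizedPhase} and \eqref{HomogenizedPotential} for smooth tests, with $\phi(0)=\phi_0$ encoded. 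The bounds $\phi\Gu\in L^2(\O_T)$, $\nabla\mu\in L^2$ and $G(\phi)\in L^2$ yield $\partial_t\phi\in L^2(S;H^{-1}(\O))$, and density extends the identities to all $\varphi_2,\varphi_3\in L^2(S;H^1(\O))$.

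\textbf{Momentum equation.} This is the main obstacle: $\zeta\in C^\infty_{c,\div}(\O)^3$ is not admissible, so we test with $\bvp_1=\eta\,\Rc^\e\zeta$ from Lemma~\ref{lem:solenoidal-restriction}. Since $\alpha>3$, $\overline{\Rc^\e\zeta}\to\zeta$ strongly in $H^1$ (in the zero-extension sense) and the family is bounded in $W^{1,3}$, or at least in $L^\infty$ near the obstacles; I would assume the lemma provides this. Integrating by parts in time, $\int\eta'(\Gu^\e,\Rc^\e\zeta)\to\int\eta'(\Gu,\zeta)$ by \eqref{TSL-pairing}. The viscous term converges by the pairing of the weakly convergent $\overline{D\Gu^\e}$ with the strongly convergent $\overline{D\Rc^\e\zeta}$, and the force term converges by \eqref{ForceConvergence}. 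For the convective term, $|\Rey_\e\int b^\e|\le\Rey_\e\|\Gu^\e\|_{L^4(S;L^3)}^2\|\nabla\Rc^\e\zeta\|_{L^2(S;L^3)}$. The first factor is bounded by interpolation of the energy bounds through the extension; the second I expect to be bounded from the construction, and if not I would use $\|\Gu^\e\|^2_{L^2(S;L^6)}$ together with an $L^\infty$ bound and accept a worse power, since $\Rey_\e\to0$ absorbs any bounded factor. For the coupling term, $\chi_p^\e\phi^\e\nabla\mu^\e\rightharpoonup\phi\nabla\mu$ weakly in $L^2(S;L^r)$ with $r<3/2$, so it must be paired with $\Rc^\e\zeta\to\zeta$ strongly in $L^2(S;L^{r'})$, $r'>3$. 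This follows from $H^1$ convergence combined with a uniform $L^\infty$ bound by interpolation. The initial term $(\Gu_0^\e,\Rc^\e\zeta)\to(\Gu_0,\zeta)$ by \eqref{ICA01}. We obtain \eqref{HomogenizedMomentum} for $\bvp_1=\eta\zeta$. Since $\phi\nabla\mu\in L^2(S;L^{6/5})\subset L^2(S;H^{-1})$, the comparison gives $\partial_t\Gu\in L^2(S;H^{-1}_{0,\div})$, and density of $C^\infty_{c,\div}$ in $H^1_{0,\div}$ extends the identity. The membership in $\fU\times\fC\times\fW$, including the weak and strong continuity in time, follows from the time-derivative bounds and the Lions--Magenes lemma.

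\textbf{Energy inequality.} Pass to the limit in \eqref{weak-energy1} after multiplying by a nonnegative $\psi\in C_c(S)$ and integrating in $t$. On the left, the kinetic, gradient and dissipation terms are handled by weak lower semicontinuity of the zero extensions, using $\int|D\Gu|^2\le\liminf\int\chi_p^\e|D\wt\Gu^\e|^2$, and $F(\phi^\e)$ converges by strong $L^1$ convergence. On the right, $\mathcal E^\e(0)\to\mathcal E(0)$ by \eqref{ICA01} and dominated convergence, and $\int\th^\e\cdot\Gu^\e\to\int\th_0\cdot\Gu$ by weak-times-strong convergence. Finally, $\int G(\phi^\e)\mu^\e\to\int G(\phi)\mu$ because $G(\wt\phi^\e)\to G(\phi)$ strongly in $L^2$ and $\overline{\mu^\e}\rightharpoonup\mu$ weakly. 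Since the inequality holds for every nonnegative $\psi$, it holds for a.e. $t$.
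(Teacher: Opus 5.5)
Your overall route is the paper's. You test the momentum equation with $\eta\,\Rc^\e\zeta$, pass to the limit term by term using Lemma~\ref{lem:limit-compactness}, identify $\partial_t\Gu$ by comparison, extend the test class by density, and obtain \eqref{weak-energy2} by weighting \eqref{weak-energy1} in time and using lower semicontinuity. Two of your choices differ from the paper but are harmless. You encode the initial data through $\eta(0)\neq0$, whereas the paper uses $\chi\in C_c^\infty(S)$ and takes \eqref{eq:limit-initial-conditions} from Lemma~\ref{lem:limit-compactness}(iii). You identify $\partial_t\phi$ by comparison in the limit equation, whereas the paper uses weak compactness of $\partial_t\overline{\phi^\e}$ in $L^2(S;H^{-1}(\O))$.

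The step that fails as written is the extra regularity you assume for $\Rc^\e\zeta$. Lemma~\ref{lem:solenoidal-restriction} gives only the $H^1$ bound and the $H^1$ convergence, and a uniform $W^{1,3}$ bound is in fact false.
\begin{itemize}
\item On each annulus $A_{\e,\kappa}$ the field passes from $0$ to $\zeta$ across a width $Ra_\e$. Hence $|\nabla\Rc^\e\zeta|\sim a_\e^{-1}$ on a set of measure $\sim a_\e^3$.
\item Since $\int|\nabla v|^3\,\md x$ is scale invariant in three dimensions, each cell contributes an amount $\gtrsim|\zeta(x_{\e,\kappa})|^3$ to $\|\nabla\Rc^\e\zeta\|^3_{L^3}$.
\item Summing over the $\sim\e^{-3}$ cells gives $\|\nabla\Rc^\e\zeta\|_{L^3}\sim\e^{-1}$.
\end{itemize}
Your primary convective estimate therefore only yields $C\Rey_\e\,\e^{-1}$. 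This need not tend to zero, because \eqref{ReynoldsAssumption} imposes no rate on $\Rey_\e$.

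No extra regularity is needed; two choices of H\"older exponents work with only $\|\nabla\Rc^\e\zeta\|_{L^2}\le C_\zeta$:
\begin{itemize}
\item exponents $L^4\times L^4\times L^2$ in space, with $\overline{\Gu^\e}$ bounded in $L^{8/3}(S;L^4)$ (the paper's choice);
\item exponents $L^6\times L^6\times L^{3/2}$, with $\Gu^\e$ bounded in $L^2(S;L^6)$ by \eqref{eq:PW-Sobolev}.
\end{itemize}
Either gives a bound $O(\Rey_\e)$. The second is presumably what your fallback intends, and it should be the argument, not the backup.

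For the capillary term the $L^\infty$ bound is likewise superfluous. It is plausibly true, but it is not part of the lemma and would need a separate $L^p$ estimate for the Bogovski\u{\i} correction. Instead take $r\in(6/5,3/2)$, so that $r'\in(3,6)$. The strong $H^1$ convergence of $\overline{\Rc^\e\zeta}$ together with $H^1\hookrightarrow L^6$ then already gives strong convergence in $L^{r'}$.
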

%
Since the order parameter is tracked in $H^1$,
the cost also requires the strong convergence of its gradient.

\begin{corollary}\label{CorGradPhiStrong}
	Under the hypotheses of Theorem~\ref{ThHomogenized}, along the subsequence of
	Lemma~\ref{lem:limit-compactness},
	\begin{equation}\label{GradPhiStrong}
		\overline{\nabla\phi^\e}=\chi_p^\e\nabla\wt\phi^\e\longrightarrow\nabla\phi
		\qquad\text{strongly in }L^2(\O_T)^3 .
	\end{equation}
\end{corollary}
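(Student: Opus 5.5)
The plan is to combine the weak convergence $\overline{\nabla\phi^\e}\rightharpoonup\nabla\phi$ in $L^2(\O_T)^3$, already given by Lemma~\ref{lem:limit-compactness}(i), with convergence of the norms. In a Hilbert space, weak convergence together with convergence of norms gives strong convergence. The identity $\overline{\nabla\phi^\e}=\chi_p^\e\nabla\wt\phi^\e$ holds because $\wt\phi^\e=\phi^\e$ on $\O_p^\e$. It therefore remains to show
\[
\int_{\O_T}\bigl|\overline{\nabla\phi^\e}\bigr|^2\,\md(x,t)=\int_{\Qc^\e_p}|\nabla\phi^\e|^2\,\md(x,t)\longrightarrow\int_{\O_T}|\nabla\phi|^2\,\md(x,t).
\]

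To get this, I will use the chemical potential equation as an energy identity. Since $\phi^\e\in L^\infty(S;H^1(\O_p^\e))\subset L^2(S;H^1(\O_p^\e))$, the choice $\varphi_3=\phi^\e$ is admissible in \eqref{weak-mu1}. It gives
\[
\int_{\Qc^\e_p}|\nabla\phi^\e|^2
=\int_{\Qc^\e_p}\mu^\e\phi^\e-\int_{\Qc^\e_p}F'(\phi^\e)\phi^\e
=\int_{\O_T}\overline{\mu^\e}\,\wt\phi^\e-\int_{\O_T}\chi_p^\e\bigl((\wt\phi^\e)^4-(\wt\phi^\e)^2\bigr).
\]
Likewise, $\varphi_3=\phi\in L^2(S;H^1(\O))$ is admissible in \eqref{HomogenizedPotential}. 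It gives $\int_{\O_T}|\nabla\phi|^2=\int_{\O_T}\mu\phi-\int_{\O_T}F'(\phi)\phi$. So it suffices to pass to the limit term by term on the right-hand side.

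For the first term, $\overline{\mu^\e}\rightharpoonup\mu$ weakly in $L^2(\O_T)$ and $\wt\phi^\e\to\phi$ strongly in $C([0,T];L^2(\O))$, hence strongly in $L^2(\O_T)$. The weak--strong product therefore converges to $\int_{\O_T}\mu\phi$.

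The potential term is the only point requiring some care, and I expect it to be the main (though mild) obstacle. I split it as
\[
\int_{\O_T}(\wt\phi^\e)^4-(\wt\phi^\e)^2\quad\text{and}\quad-\int_{\O_T}\chi_s^\e\bigl((\wt\phi^\e)^4-(\wt\phi^\e)^2\bigr).
\]

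For the first part, Lemma~\ref{lem:limit-compactness} with $s=4$ gives $\wt\phi^\e\to\phi$ in $C([0,T];L^4(\O))$. Hence $(\wt\phi^\e)^4\to\phi^4$ and $(\wt\phi^\e)^2\to\phi^2$ in $L^1(\O_T)$, for instance via $|a^4-b^4|\le|a-b|(|a|+|b|)^3$ and Hölder's inequality.

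For the hole part, the primal extension is bounded in $L^\infty(S;H^1(\O))$ by Lemma~\ref{Est01} and the extension lemmas, hence in $L^\infty(S;L^6(\O))$. Hölder's inequality then gives
\[
\int_{\O_T}\chi_s^\e(\wt\phi^\e)^4\le T\,|\O_s^\e|^{1/3}\,\|\wt\phi^\e\|^4_{L^\infty(S;L^6(\O))}\le C\e^{\alpha-1}\to0
\]
by \eqref{eq:obstacle-volume}. The quadratic term is handled similarly, using \eqref{eq:char-convergence}.

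Combining the two limits, the right-hand side converges to $\int_{\O_T}\mu\phi-\int_{\O_T}F'(\phi)\phi=\|\nabla\phi\|^2_{L^2(\O_T)}$. This gives $\|\overline{\nabla\phi^\e}\|_{L^2(\O_T)}\to\|\nabla\phi\|_{L^2(\O_T)}$. Together with the weak convergence from \eqref{TSL0}, this proves \eqref{GradPhiStrong} along the subsequence of Lemma~\ref{lem:limit-compactness}.
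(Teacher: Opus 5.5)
Your proposal is correct and follows essentially the paper's argument: test \eqref{weak-mu1} with $\phi^\e$ and \eqref{HomogenizedPotential} with $\phi$, pass to the limit on the right-hand side to get convergence of the norms, then combine this with the weak convergence of $\overline{\nabla\phi^\e}$. The only difference is in the potential term: the paper pairs the weak convergence $F'(\wt\phi^\e)\rightharpoonup F'(\phi)$ from \eqref{TSL02} with the strong convergence of $\overline{\phi^\e}$, which makes your explicit expansion of $F'(s)s$ and your separate obstacle-set estimate unnecessary (though both are correct).
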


No uniqueness has been used so far: Theorem~\ref{ThHomogenized} holds for every
choice of weak solutions at the level $\e$. To recover every admissible pair of
the limit problem, the limit system must be uniquely solvable.

\subsection{The homogenized control problem}\label{SSecLimOCP}

The scaling \eqref{ReynoldsAssumption} is chosen for this purpose. The a priori
estimates hold for any $\Rey_\e\le1$, since the convective term is orthogonal to
$\Gu^\e$ and absent from the energy identity. The condition $\Rey_\e\to0$
removes the convective term from the limit; if it survived, the homogenized
system would inherit the open uniqueness problem for the three-dimensional
Navier--Stokes equations. With $\Rey_\e\to0$, the limit system is uniquely
solvable when $\partial\O\in C^{2,1}$. Since the mean of the order parameter is
not conserved, it is carried as a separate component of the energy in the
proof; the regularity of $\partial\O$ enters through elliptic regularity for the
Neumann Laplacian.

\begin{theorem}[Regularity and uniqueness for the homogenized system]
	\label{ThmUniq}
	Let $\O\subset\R^3$ be a bounded domain with $\partial\O\in C^{2,1}$, 
	let the
	initial data satisfy \eqref{ICA01}, and let $\th_0\in L^2(\O_T)^3$. Then every
	weak solution $(\Gu,\phi,\mu)$ of the homogenized system in the sense of
	\eqref{HomogenizedMomentum}--\eqref{HomogenizedPotential} satisfies the
	additional regularity
	\begin{equation}\label{ExtraReg01}
		\phi\in L^2(S;H^3(\O)),
	\end{equation}
	and the weak solution is unique.
\end{theorem}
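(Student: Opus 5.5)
The plan has two parts. First we establish the regularity \eqref{ExtraReg01}. Then we run a Gr\"onwall argument on the difference of two weak solutions, tested against the inverse Neumann Laplacian for the order parameter.

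\textbf{Regularity.} From \eqref{HomogenizedPotential}, for a.e.\ $t$ the function $\phi(t)\in H^1(\O)$ is a weak solution of the Neumann problem $-\Delta\phi+\phi=\mu-F'(\phi)+\phi$ with $\partial_\bn\phi=0$. Since $\phi\in L^\infty(S;H^1(\O))\subset L^\infty(S;L^6(\O))$, we have $F'(\phi)\in L^\infty(S;L^2(\O))$, and the right-hand side lies in $L^2(S;L^2(\O))$. Elliptic $H^2$-regularity for the Neumann Laplacian on a $C^{1,1}$ domain then gives $\phi\in L^2(S;H^2(\O))$. We bootstrap once. Because $H^2\subset L^\infty$, we have $\nabla F'(\phi)=(3\phi^2-1)\nabla\phi\in L^2(S;L^2)$, indeed bounded by $\|\phi\|_{L^\infty H^1}$ times $\|\phi\|^2_{L^\infty}$ with $L^2_t$ integrability. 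Hence the right-hand side is in $L^2(S;H^1(\O))$, and $H^3$-regularity on a $C^{2,1}$ domain gives $\phi\in L^2(S;H^3(\O))$. If the $L^\infty$ bound is awkward, we instead estimate $\phi^2\nabla\phi$ in $L^2$ via $\|\phi\|_{L^\infty_tL^6}^2\|\nabla\phi\|_{L^6}$ and use $\nabla\phi\in L^2(S;H^1)$.

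\textbf{Uniqueness.} Let $(\Gu_i,\phi_i,\mu_i)$, $i=1,2$, be two solutions with the same data, and set $\Gu=\Gu_1-\Gu_2$, $\phi=\phi_1-\phi_2$, $\mu=\mu_1-\mu_2$. Write $\phi=\bar\phi+\hat\phi$, where $\bar\phi(t)=|\O|^{-1}\int_\O\phi(t)$ is the mean and $\hat\phi$ has mean zero, and let $\mathcal N$ be the inverse Neumann Laplacian on mean-zero functions. The energy will be
\[
\Phi(t)=\tfrac12\|\Gu\|^2_{L^2}+\tfrac{\lambda}{2}\|\nabla\mathcal N\hat\phi\|^2_{L^2}+\tfrac{\lambda}{2}|\O|\,\bar\phi^2 .
\]
We test the momentum difference with $\Gu$, which is admissible because the problem is linear Stokes with $\partial_t\Gu\in L^2H^{-1}_{0,\div}$. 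We test the phase difference with $\lambda(\mathcal N\hat\phi+\bar\phi)$, and the potential difference with $\lambda\hat\phi$. The term $\lambda\int\mu\hat\phi$ then cancels the $\nabla\mu\cdot\nabla\mathcal N\hat\phi$ contribution and produces $\lambda\|\nabla\phi\|^2+\lambda\int(F'(\phi_1)-F'(\phi_2))\hat\phi$. The monotone part $\phi^3$ helps. The $-\phi$ part and the terms with $\bar\phi$ (for instance $\int(F'(\phi_1)-F'(\phi_2))\bar\phi$) are bounded using $|F'(\phi_1)-F'(\phi_2)|\le C(1+\phi_1^2+\phi_2^2)|\phi|$ and the $L^\infty(S;L^6)$ bounds, giving $\tfrac14\|\nabla\phi\|^2+C\Phi$. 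The source contributes $\int (G(\phi_1)-G(\phi_2))(\mathcal N\hat\phi+\bar\phi)\le C\|\phi\|_{L^2}(\|\nabla\mathcal N\hat\phi\|+|\bar\phi|)$, and $\|\phi\|_{L^2}^2\le\|\hat\phi\|^2+C\bar\phi^2$ with $\|\hat\phi\|^2\le\|\nabla\mathcal N\hat\phi\|\,\|\nabla\hat\phi\|$, which is absorbed.

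\textbf{Coupling terms.} Here we expect the main obstacle: the difference of the capillary forces $\phi_1\nabla\mu_1-\phi_2\nabla\mu_2$ and of the fluxes $\phi_1\Gu_1-\phi_2\Gu_2$. We rewrite $\phi\nabla\mu=\nabla(\cdot)-\mu\nabla\phi$ and use the Korteweg form, so that modulo gradients (orthogonal to $\Gu$) the force becomes $-\Delta\phi\nabla\phi$. The difference then contains $\Delta\phi_1\nabla\phi$ and $\Delta\phi\nabla\phi_2$. In the phase equation the flux difference $\phi_1\Gu+\phi\Gu_2$ is tested against $\nabla\mathcal N\hat\phi$. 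The terms $\int\Delta\phi\,\nabla\phi_2\cdot\Gu$ and $\int\phi_1\Gu\cdot\nabla\mathcal N\hat\phi$ do not cancel exactly. We treat them with the $H^3$ regularity: after integrating by parts, $\int\Delta\phi\nabla\phi_2\cdot\Gu$ is bounded by $\|\nabla\phi\|\,(\|\nabla\Gu\|\,\|\nabla\phi_2\|_{L^\infty}+\|\Gu\|_{L^6}\|D^2\phi_2\|_{L^3})$, which gives $\tfrac{\nu}{2}\|\nabla\Gu\|^2$ plus a contribution $\|\phi_2\|^2_{H^3}\|\nabla\phi\|^2$. That contribution cannot be absorbed by $\|\nabla\phi\|^2$ with an integrable-in-time coefficient alone. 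We would therefore first try the alternative of testing the momentum equation with $\Gu$ and keeping $\lambda\int\phi\nabla\mu_2\cdot\Gu$ in the form $\|\phi\|_{L^3}\|\nabla\mu_2\|_{L^2}\|\Gu\|_{L^6}$. Interpolating $\|\phi\|_{L^3}$ between $H^{-1}$ (that is, $\Phi$) and $H^1$ gives a Young bound $\tfrac\nu4\|\nabla\Gu\|^2+\tfrac\lambda4\|\nabla\phi\|^2+C\|\nabla\mu_2\|^4\Phi$. If $\|\nabla\mu_2\|^4$ is not integrable, we fall back on the $L^2H^3$ regularity of $\phi_2$ to get an $L^1$ weight. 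The term $\phi_1\nabla\mu\cdot\Gu$ is moved by integration by parts onto $\mu\,\Gu\cdot\nabla\phi_1$ and paired against the corresponding flux term in the phase equation.

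Collecting these bounds yields $\Phi'\le g(t)\Phi$ with $g\in L^1(S)$ and $\Phi(0)=0$. Gr\"onwall's inequality then gives $\Phi\equiv0$, hence $\Gu=0$ and $\phi=0$, and finally $\mu=0$ from \eqref{HomogenizedPotential}.
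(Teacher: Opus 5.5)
Your regularity bootstrap is fine. Its fallback form, via $\|\phi\|^2_{L^6}\|\nabla\phi\|_{L^6}$, is exactly the paper's argument, and carrying the mean $\bar\phi$ separately is also what the paper does.

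\textbf{The gap.} The problem is the choice of the $H^{-1}$-type energy $\Phi$ for the phase-field difference. With it, the dissipation controls only $\|\nabla\phi\|^2_{L^2}$ and $\|\nabla\mathbf u\|^2_{L^2}$, and the coupling terms cannot be closed in three dimensions.
\begin{itemize}
\item You noticed this for $\int\Delta\phi\,\nabla\phi_2\cdot\mathbf u$. After integration by parts it contains a piece of size $\|\nabla\phi_2\|_{L^\infty}\|\nabla\phi\|_{L^2}\|\nabla\mathbf u\|_{L^2}$, a product of two dissipation quantities with a coefficient that is not small.
\item Writing $\int\phi_1\nabla\mu\cdot\mathbf u=-\int\mu\,\mathbf u\cdot\nabla\phi_1$ and pairing it with the flux term produces no cancellation. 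That flux term is a multiple of $\int(\mathbf u\cdot\nabla\phi_1)\,\mathcal N\hat\phi$, whereas $\mu\approx-\Delta\phi$. These differ by four derivatives, and what remains is again $\int\Delta\phi\,\mathbf u\cdot\nabla\phi_1$.
\item For $\lambda\int\phi\nabla\mu_2\cdot\mathbf u$, the interpolation $\|\phi\|_{L^3}\le C\|\phi\|^{1/4}_{H^{-1}}\|\phi\|^{3/4}_{H^1}$ followed by Young gives the weight $\|\nabla\mu_2\|^8_{L^2}$, not $\|\nabla\mu_2\|^4_{L^2}$. Neither power is in $L^1(S)$, since a weak solution only gives $\nabla\mu_2\in L^2(S;L^2(\O))$.
\item The $L^2(S;H^3)$ regularity of $\phi_2$ does not help. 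It returns exactly $\nabla\mu_2=-\nabla\Delta\phi_2+F''(\phi_2)\nabla\phi_2\in L^2(S;L^2(\O))$ again.
\end{itemize}
This is not a matter of better exponents. With $\phi$ measured in $H^{-1}$, $\mathbf u$ in $L^2$, and coefficients only square-integrable in time, a derivative count shows these terms are at or beyond the dissipation level. So the concluding inequality $\Phi'\le g\Phi$ with $g\in L^1(S)$ does not follow.

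\textbf{The missing idea.} Measure $\phi$ one derivative higher, so that the leading coupling terms cancel exactly.
\begin{itemize}
\item Test \eqref{DiffPhi01} with $-\lambda\Delta\phi$. This is admissible because $\Delta\phi\in L^2(S;H^1(\O))$ by \eqref{ExtraReg01}; this is what the $H^3$ regularity is for.
\item The identity $\langle\partial_t\phi,-\Delta\phi\rangle=\tfrac12\tfrac{\md}{\md t}\|\nabla\phi\|^2_{L^2}$ follows from a Lions--Magenes argument. One can use the triple $V\subset H^1(\O)\subset V'$, where $V$ is the set of $H^3$ functions with $\Gn\cdot\nabla v=0$ on $\partial\O$.
\item Substitute $\nabla\mu=-\nabla\Delta\phi+\nabla\bigl(F'(\phi_1)-F'(\phi_2)\bigr)$. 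Split $\phi_1\nabla\mu_1-\phi_2\nabla\mu_2=\phi\nabla\mu_1+\phi_2\nabla\mu$ and $\phi_1\mathbf u_1-\phi_2\mathbf u_2=\phi\,\mathbf u_1+\phi_2\mathbf u$.
\item Then $\lambda(\phi_2\nabla\Delta\phi,\mathbf u)$ from the capillary term cancels $-\lambda(\phi_2\mathbf u,\nabla\Delta\phi)$ from the transport term.
\end{itemize}
The energy becomes $E=\|\mathbf u\|^2_{L^2}+\lambda\|\nabla\phi\|^2_{L^2}+|\langle\phi\rangle|^2$, with dissipation $2\nu\|D(\mathbf u)\|^2_{L^2}+\lambda\|\nabla\Delta\phi\|^2_{L^2}$. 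Now $\|\phi\|_{H^1}$ is an energy quantity rather than a dissipation quantity, so the remaining terms carry integrable weights:
\begin{itemize}
\item $(\phi\nabla\mu_1,\mathbf u)$ gives $\|\nabla\mu_1\|^2_{L^2}E$;
\item $(\phi\,\mathbf u_1,\nabla\Delta\phi)$ gives $\|\mathbf u_1\|^2_{L^3}E$;
\item the $F'$-differences give $(1+\|\phi_2\|^2_{H^2})E$, using $\|\Delta\phi\|_{L^2}\le\|\nabla\phi\|^{1/2}_{L^2}\|\nabla\Delta\phi\|^{1/2}_{L^2}$.
\end{itemize}
Your treatment of the mean carries over unchanged, and Gr\"onwall then closes the argument.
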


By Theorems~\ref{ThHomogenized} and~\ref{ThmUniq}, for every
$\th\in L^2(\O_T)^3$ the system
\eqref{HomogenizedMomentum}--\eqref{HomogenizedPotential} with body force $\th$
and initial data \eqref{eq:limit-initial-conditions} has a unique weak solution,
denoted by
\[
\Sc(\th):=(\Gu_\th,\phi_\th,\mu_\th)\in\fU\X\fC\X\fW .
\]
The homogenized cost functional is
\begin{equation}\label{Cost02}
	\GJ(\th,\Gu,\phi):=
	\frac12\|\Gu-\mathbf u_d\|^2_{L^2(\O_T)}
	+\frac12\|\phi-\phi_d\|^2_{L^2(S;H^1(\O))}
	+\frac{\beta}{2}\|\th\|^2_{L^2(\O_T)} ,
\end{equation}
the set of admissible pairs is
\begin{multline}\label{AdmSetLim}
	\Ac:=\Bigl\{\bigl(\th,(\Gu,\phi,\mu)\bigr):\ \th\in L^2(\O_T)^3,\\
	(\Gu,\phi,\mu)\ \text{weak solution of
		\eqref{HomogenizedMomentum}--\eqref{HomogenizedPotential}
		with body force }\th\text{ and initial data
		\eqref{eq:limit-initial-conditions}}\Bigr\},
\end{multline}
and the homogenized problem reads
\begin{equation}\label{OPlim}\tag{$\mathrm{OP}$}
	\text{minimize }\ \GJ(\th,\Gu,\phi)
	\quad\text{over all }\ \bigl(\th,(\Gu,\phi,\mu)\bigr)\in\Ac .
\end{equation}
Unlike $\Ac^\e$, the set $\Ac$ is the graph of the control-to-state map
$\Sc:L^2(\O_T)^3\to\fU\X\fC\X\fW$, and \eqref{OPlim} may equivalently be written
in the reduced form
\begin{equation}\label{ReducedOP}
	\text{minimize}\quad
	\GJ_{\mathrm{red}}(\th):=\GJ\bigl(\th,\Gu_\th,\phi_\th\bigr)
	\quad\text{over }\th\in L^2(\O_T)^3 .
\end{equation}
Its value is
\begin{equation}\label{InfLim}
	\m:=\inf\Bigl\{\GJ(\th,\Gu,\phi):\ \bigl(\th,(\Gu,\phi,\mu)\bigr)\in\Ac\Bigr\}\ \ge0 .
\end{equation}
The two problems are posed on different domains, and \eqref{OPeps} is not in
reduced form. We compare them through zero extensions of control--state pairs.

\subsection{\texorpdfstring{$\Gamma$}{Gamma}-convergence and homogenization of the control problem}\label{SSecGamma}

A control--state pair
$\bigl(\th^\e,(\Gu^\e,\phi^\e,\mu^\e)\bigr)\in\Ac^\e$ is written as the
quadruple
\begin{equation}\label{IdentificationMap}
	\Xi^\e:=\bigl(\th^\e,\Gu^\e,\phi^\e,\mu^\e\bigr)
	\ \in\
	\fX^\e:=L^2(\Qc^\e_p)^3\X\fU^\e\X\fC^\e\X\fW^\e ,
\end{equation}
and we write $\Xi^\e\in\Ac^\e$. Since the components live on the
$\e$-dependent domain $\O_p^\e$, limits are taken through the zero extensions
\eqref{eq:extensions-notation}, in the sense of the following definition.

\begin{definition}[Convergence of control--state quadruples]\label{DefOCConv}
	Let $\Lambda^\e=(\vartheta^\e,\bv^\e,\psi^\e,\nu^\e)\in\fX^\e$ and
	$\Lambda=(\vartheta,\bv,\psi,\nu)\in\fX:=L^2(\O_T)^3\X\fU\X\fC\X\fW$. We
	write $\Lambda^\e\ocarrow\Lambda$ as $\e\to0$ if
	\begin{alignat}{2}
		&\overline{\vartheta^\e}\rightharpoonup\vartheta
		\quad\text{weakly in }L^2(\O_T)^3, \label{OCConv1}\\
		&\overline{\bv^\e}\to\bv
		\quad\text{strongly in }L^2(\O_T)^3,\qquad
		&& \overline{\nabla\bv^\e}\rightharpoonup \nabla\bv
		\quad\text{weakly in }L^2(\O_T)^{3\X3},\label{OCConv2}\\
		&\overline{\psi^\e}\to\psi
		\quad\text{strongly in }C([0,T];L^2(\O)),\qquad
		&&\overline{\nabla\psi^\e}\to\nabla\psi
		\quad\text{strongly in }L^2(\O_T)^3, \label{OCConv3}\\
		&\overline{\nu^\e}\rightharpoonup\nu
		\quad\text{weakly in }L^2(\O_T),\qquad
		&&\overline{\nabla\nu^\e}\rightharpoonup \nabla\nu
		\quad\text{weakly in }L^2(\O_T)^3 . \label{OCConv4}
	\end{alignat}
\end{definition}
All
statements below are sequential, and the fundamental theorem of
$\Gamma$-convergence is used only in its sequential form; no metrizability is
required.

	Since $\overline{w}$ vanishes on $\O^\e_s$ and coincides with $w$ on
$\O^\e_p$, one has $\chi^\e_p\overline{w}=\overline{w}$ and
\begin{equation}\label{eq:ideC}
	\|w-w_d\|^2_{L^2(\Qc^\e_p)}
	=\bigl\|\chi^\e_p(\overline{w}-w_d)\bigr\|^2_{L^2(\O_T)}
	=\bigl\|\overline{w}-\chi^\e_p w_d\bigr\|^2_{L^2(\O_T)}
\end{equation}
for every $w_d\in L^2(\O_T)$. Applying \eqref{eq:ideC} to $\Gu^\e$,
$\phi^\e$ and, componentwise, $\partial_i\phi^\e$, and using that the zero
extension is an isometry of $L^2(\O_p^\e)$ into $L^2(\O)$, the cost
\eqref{Cost01} is written on the fixed domain as
\begin{equation}\label{GammaCost}
	\GJ^\e(\th^\e,\Gu^\e,\phi^\e)
	=\frac12\|\overline{\Gu^\e}-\chi^\e_p\mathbf u_d\|^2_{L^2(\O_T)}
	+\frac12\|\overline{\phi^\e}-\chi^\e_p\phi_d\|^2_{L^2(\O_T)}
	+\frac12\|\overline{\nabla\phi^\e}-\chi^\e_p\nabla\phi_d\|^2_{L^2(\O_T)}
	+\frac{\beta}{2}\|\overline{\th^\e}\|^2_{L^2(\O_T)} .
\end{equation}
We define $\Jc^\e:\fX^\e\to[0,+\infty]$ and $\Jc:\fX\to[0,+\infty]$ by
\begin{equation}\label{GammaFunctional}
	\Jc^\e(\Lambda^\e):=
	\begin{cases}
		\GJ^\e(\vartheta^\e,\bv^\e,\psi^\e)
		&\text{if }\Lambda^\e\in\Ac^\e,\\[1mm]
		+\infty &\text{otherwise},
	\end{cases}
	\qquad
	\Jc(\Lambda):=
	\begin{cases}
		\GJ(\vartheta,\bv,\psi)
		&\text{if }\Lambda\in\Ac,\\[1mm]
		+\infty &\text{otherwise} .
	\end{cases}
\end{equation}
An element $\Lambda\in\fX$ with $\Jc(\Lambda)<\infty$ is called
admissible. For admissible $\Lambda$ the value $\GJ(\vartheta,\bv,\psi)$
is finite, since $\psi\in\fC\subset L^2(S;H^1(\O))$. By
Theorems~\ref{ThHomogenized} and~\ref{ThmUniq}, the admissible
quadruples are exactly $(\th,\Sc(\th))$ with $\th\in L^2(\O_T)^3$; the
definition \eqref{GammaFunctional} refers to admissibility alone, which
keeps Theorem~\ref{ThmGamma}(ii) free of any uniqueness argument.
\begin{theorem}[Sequential $\Gamma$-convergence]\label{ThmGamma}
	Let $\O\subset\R^3$ be a bounded domain with $\partial\O\in C^{2,1}$.
	The following hold as $\e\to0$.
	\begin{enumerate}[label=\textup{(\roman*)}]
		\item \textup{(Equicoercivity.)} If $\Jc^\e(\Lambda^\e)\le M$ for some
		$M\ge0$ and every $\e$ in a sequence tending to $0$, then every
		subsequence has a further subsequence with $\Lambda^\e\ocarrow\Xi$ for
		some $\Xi\in\Ac$.
		\item \textup{($\liminf$ inequality.)} If $\Lambda^\e\ocarrow\Lambda$ and
		$\liminf_{\e\to0}\Jc^\e(\Lambda^\e)<\infty$, then $\Lambda\in\Ac$ and
		\[
		\Jc(\Lambda)\le\liminf_{\e\to0}\Jc^\e(\Lambda^\e) .
		\]
		\item \textup{($\limsup$ inequality.)} For every $\Xi\in\Ac$ there exists a
		family $\{\Xi^\e\}_\e$ with $\Xi^\e\in\Ac^\e$ such that
		\[
		\Xi^\e\ocarrow\Xi
		\qquad\text{and}\qquad
		\lim_{\e\to0}\Jc^\e(\Xi^\e)=\Jc(\Xi) .
		\]
	\end{enumerate}
	In particular, $\{\Jc^\e\}$ is equicoercive and $\Gamma$-converges to $\Jc$
	with respect to $\ocarrow$, in the sequential sense.
\end{theorem}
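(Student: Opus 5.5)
We prove the three items in the order (i), (ii), (iii), reducing each to the results of Sections~\ref{SSecExist}--\ref{SSecLimOCP}.

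\textbf{(i) Equicoercivity.} If $\Jc^\e(\Lambda^\e)\le M$, then $\Lambda^\e\in\Ac^\e$, and the Tikhonov term gives $\|\vartheta^\e\|_{L^2(\Qc^\e_p)}\le(2M/\beta)^{1/2}$. Hence Lemma~\ref{Est01} applies with a constant independent of $\e$. Extract a subsequence along which $\overline{\vartheta^\e}\rightharpoonup\vartheta$ weakly in $L^2(\O_T)^3$. Then Lemma~\ref{lem:limit-compactness} applies, after a further subsequence, and yields the convergences \eqref{TSL0} of both extensions. These give \eqref{OCConv2}, \eqref{OCConv4} and the first half of \eqref{OCConv3}, using $s=2$ for the strong convergences. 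The assumption \eqref{ForceConvergence} now holds, so Theorem~\ref{ThHomogenized} shows that the limit $(\bv,\psi,\nu)$ solves \eqref{HomogenizedMomentum}--\eqref{HomogenizedPotential} with force $\vartheta$ and initial data \eqref{eq:limit-initial-conditions}. Thus $\Xi:=(\vartheta,\bv,\psi,\nu)\in\Ac$. Finally, Corollary~\ref{CorGradPhiStrong} supplies the strong convergence of $\overline{\nabla\psi^\e}$ required in \eqref{OCConv3}, so $\Lambda^\e\ocarrow\Xi$.

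\textbf{(ii) $\liminf$ inequality.} Pass to a subsequence realizing the $\liminf$ with $\Jc^\e(\Lambda^\e)\le M<\infty$. By (i), a further subsequence converges in the sense of $\ocarrow$ to some $\Xi\in\Ac$. The limits in Definition~\ref{DefOCConv} are unique, so $\Xi=\Lambda$, and therefore $\Lambda\in\Ac$. For the inequality we use the fixed-domain form \eqref{GammaCost}. Since $\chi^\e_p\to1$ in $L^q$ and is bounded, we have $\chi^\e_p\mathbf u_d\to\mathbf u_d$, $\chi^\e_p\phi_d\to\phi_d$ and $\chi^\e_p\nabla\phi_d\to\nabla\phi_d$ strongly in $L^2(\O_T)$ by dominated convergence. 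The three tracking terms then converge to their limits, by \eqref{OCConv2}--\eqref{OCConv3}. The control term is weakly lower semicontinuous under \eqref{OCConv1}. This gives $\Jc(\Lambda)\le\liminf\Jc^\e(\Lambda^\e)$. No uniqueness enters this step.

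\textbf{(iii) $\limsup$ inequality.} This is the main obstacle and the only place where $\partial\O\in C^{2,1}$ is used. Let $\Xi=(\th,\Gu,\phi,\mu)\in\Ac$. Take the restricted control $\th^\e:=\th|_{\Qc^\e_p}$, so that $\overline{\th^\e}=\chi^\e_p\th\to\th$ \emph{strongly} in $L^2(\O_T)^3$. By Remark~\ref{rem:existence}, choose any weak solution $(\Gu^\e,\phi^\e,\mu^\e)\in\Sc^\e(\th^\e)$ and set $\Xi^\e\in\Ac^\e$. The forces are bounded, so Lemma~\ref{lem:limit-compactness}, Theorem~\ref{ThHomogenized} and Corollary~\ref{CorGradPhiStrong} show that every subsequence has a further subsequence along which $\Xi^\e\ocarrow(\th,\Gu',\phi',\mu')$. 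Here $(\Gu',\phi',\mu')$ is a weak solution of the homogenized system with force $\th$. By Theorem~\ref{ThmUniq}, $(\Gu',\phi',\mu')=(\Gu,\phi,\mu)$. The limit is independent of the subsequence, so the whole family satisfies $\Xi^\e\ocarrow\Xi$. Convergence of the costs follows as in (ii), now with strong convergence of $\overline{\th^\e}$, which gives $\|\overline{\th^\e}\|\to\|\th\|$; hence $\Jc^\e(\Xi^\e)\to\Jc(\Xi)$. The delicate point is making sure that the limit identified by Theorem~\ref{ThHomogenized} lies in the uniqueness class of Theorem~\ref{ThmUniq}. Both concern weak solutions in $\fU\times\fC\times\fW$ with the same initial data \eqref{eq:limit-initial-conditions}, so this matches. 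The concluding equicoercivity and $\Gamma$-convergence statement is then just the combination of (i)--(iii).
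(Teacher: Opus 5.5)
Your proposal is correct and follows the paper's proof essentially step for step. For equicoercivity you use the Tikhonov bound on the controls, Lemma~\ref{lem:limit-compactness}, Theorem~\ref{ThHomogenized} and Corollary~\ref{CorGradPhiStrong}. The $\liminf$ inequality rests on uniqueness of $\ocarrow$-limits together with convergence of the tracking terms and weak lower semicontinuity of the control term. The recovery sequence uses the restricted controls $\th|_{\O_p^\e}$, a subsequence-of-subsequence argument, and Theorem~\ref{ThmUniq} to identify the limit, exactly as the paper does.
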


Parts (i) and (ii) follow from Section~\ref{SSecHomState} and use no uniqueness.
Part (iii) is the only point of the paper at which uniqueness is used, and it is
uniqueness for the homogenized system (Theorem~\ref{ThmUniq}): it identifies the
limit of the states obtained from the restricted controls.

\begin{theorem}[Homogenization of the optimal control problem]\label{ThmOCPHomo}
	Let $\O\subset\R^3$ be a bounded domain with $\partial\O\in C^{2,1}$ and for every $\e\in(0,\e_0)$, 
	$\hat\Xi_\e:=(\hat\th^\e,\hat\Gu^\e,\hat\phi^\e,\hat\mu^\e)\in\Ac^\e$ be
	optimal for \eqref{OPeps}. Then:
	\begin{enumerate}[label=\textup{(\roman*)}]
		\item every subsequence of $\{\hat\Xi_\e\}$ has a further subsequence
		converging, with respect to $\ocarrow$, to some $\Xi\in\Ac$;
		\item the values converge,
		\begin{equation}\label{MinConvergence}
			\m_\e\longrightarrow\m \qquad\text{as }\e\to0 ;
		\end{equation}
		\item every cluster point $\Xi=(\th,\Gu,\phi,\mu)$ of $\{\hat\Xi_\e\}$
		with respect to $\ocarrow$ is optimal for \eqref{OPlim}, that is,
		$\Jc(\Xi)=\m$. In particular, the infimum \eqref{InfLim} is attained;
		\item if $\hat\Xi_\e\ocarrow\Xi=(\th,\Gu,\phi,\mu)$ along a
		subsequence, then, along that subsequence,
		\begin{equation}\label{StrongControlConvergence}
			\overline{\hat\th^\e}\longrightarrow\th
			\qquad\text{strongly in }L^2(\O_T)^3 .
		\end{equation}
	\end{enumerate}
\end{theorem}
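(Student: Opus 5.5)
The plan is to deduce everything from Theorem~\ref{ThmGamma} by the standard argument of the fundamental theorem of sequential $\Gamma$-convergence, adding only a separate step for the strong convergence of the controls.

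For (i): by Theorem~\ref{ThmExistOPe} optimal pairs exist, and by Lemma~\ref{LemBoundControl} we have $\Jc^\e(\hat\Xi_\e)=\m_\e\le C_0$. Theorem~\ref{ThmGamma}(i) with $M=C_0$ then gives, from any subsequence, a further subsequence with $\hat\Xi_\e\ocarrow\Xi\in\Ac$.

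For (ii) and (iii), fix a subsequence along which $\m_\e\to\liminf\m_\e$, and extract a further subsequence with $\hat\Xi_\e\ocarrow\Xi$. Theorem~\ref{ThmGamma}(ii) gives
\[
\m\le\Jc(\Xi)\le\liminf\m_\e .
\]
Conversely, for any $\Lambda\in\Ac$, Theorem~\ref{ThmGamma}(iii) supplies a recovery family $\Xi^\e\in\Ac^\e$ with $\Jc^\e(\Xi^\e)\to\Jc(\Lambda)$. Optimality of $\hat\Xi_\e$ gives $\m_\e\le\Jc^\e(\Xi^\e)$, hence $\limsup\m_\e\le\Jc(\Lambda)$. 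Taking the infimum over $\Lambda$ yields $\limsup\m_\e\le\m$. Combining the two inequalities gives $\m_\e\to\m$ along the whole family. Moreover, for any cluster point $\Xi$ the same chain applied along its subsequence gives $\Jc(\Xi)=\m$, so $\Xi$ is optimal and the infimum is attained.

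For (iv), along the subsequence the chain above becomes equalities: $\Jc^\e(\hat\Xi_\e)\to\Jc(\Xi)$. I will split the cost \eqref{GammaCost} into three parts.

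\emph{Velocity tracking term.} We have $\overline{\hat\Gu^\e}\to\Gu$ strongly in $L^2$, and $\chi_p^\e\mathbf u_d\to\mathbf u_d$ strongly by \eqref{eq:char-convergence} and dominated convergence. Hence this term converges to its limit counterpart.

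\emph{Order parameter tracking terms.} By \eqref{OCConv3}, which includes the strong gradient convergence of Corollary~\ref{CorGradPhiStrong}, both the $L^2$ and the gradient tracking terms for $\hat\phi^\e$ converge as well.

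\emph{Control term.} Subtracting, we obtain $\frac\beta2\|\overline{\hat\th^\e}\|^2\to\frac\beta2\|\th\|^2$. Since $\beta>0$, this is convergence of norms. Combined with the weak convergence \eqref{OCConv1}, the Hilbert space property yields strong convergence in $L^2(\O_T)^3$.

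The only delicate point is the convergence of the tracking terms, which relies on the strong convergences built into $\ocarrow$ and on $\chi_p^\e w_d\to w_d$ strongly. The latter needs a short dominated-convergence check, since $\chi_p^\e\to1$ a.e. along a subsequence, or alternatively the bound $\|\chi_s^\e w_d\|_{L^2}\to0$ by absolute continuity of the integral because $|\O_s^\e|\to0$. Everything else is formal.
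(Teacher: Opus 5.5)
Your proposal is correct and follows the paper's proof essentially step by step. Part (i) comes from Lemma~\ref{LemBoundControl} and Theorem~\ref{ThmGamma}(i) with $M=C_0$. Parts (ii)--(iii) combine the $\liminf$ inequality, taken along a subsequence realizing $\liminf\m_\e$, with recovery sequences for the $\limsup$. Part (iv) subtracts the convergent tracking terms (via $\|\chi_s^\e w_d\|_{L^2(\O_T)}\to0$, as in \eqref{eq:tracking-conv}) to obtain $\|\overline{\hat\th^\e}\|_{L^2(\O_T)}\to\|\th\|_{L^2(\O_T)}$, and then concludes from weak convergence plus convergence of norms in a Hilbert space.
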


\begin{remark}\label{RemNonUnique}
	Theorem~\ref{ThmOCPHomo} shows that every cluster point of microscopic
	optimal pairs is optimal for \eqref{OPlim}. The converse, that every
	optimal pair of \eqref{OPlim} arises as such a limit, is not asserted:
	the recovery sequence of Theorem~\ref{ThmGamma}(iii) consists of
	admissible, not optimal, pairs for \eqref{OPeps}, and $\Gamma$-convergence
	gives no more. If \eqref{OPlim} has a unique solution, the subsequence
	argument of Step~3 of the proof of Theorem~\ref{ThmGamma} shows that the
	convergences of Theorem~\ref{ThmOCPHomo} hold along the whole family.
	
	Uniqueness cannot be expected in general. The admissible set of
	\eqref{OPlim} is a graph by Theorem \ref{ThCritical} and \ref{ThmUniq}, so the reduced
	functional \eqref{ReducedOP} is well defined, but $\Sc$ is nonlinear:
	the state depends on the control through the capillary term
	$\lambda\phi\nabla\mu$ and the transport term $\Gu\cdot\nabla\phi$, both
	quadratic in the state, and through $F'$. Hence the reduced functional
	need not be convex, and the Tikhonov term does not by itself ensure a
	unique minimizer.
\end{remark}

\begin{remark}\label{RemNoAdjoint}
	Theorem~\ref{ThmOCPHomo} compares the two problems at the level of minimum
	values and minimizers, not of optimality systems. By
	Theorems~\ref{ThHomogenized} and~\ref{ThmUniq} the homogenized problem is
	reduced, so a first-order optimality system for \eqref{OPlim} could be
	derived once the differentiability of $\Sc$ is established. We do not pursue
	this: by the discussion in Section~\ref{SSecMicroOCP}, no optimality system is
	available at the level $\e$, so such a system could not be obtained as a
	limit.
\end{remark}

\section{Preliminary results and a priori estimates}\label{Sec04}

\subsection{Extension results and uniform inequalities}
The passage to the limit uses the extension operators constructed in this
subsection and the uniform functional inequalities they yield.

\begin{lemma}[Extension on the primal level]\label{Emain}
	There exist $\e_0\in(0,1)$ and $C>0$, independent of $\e\in(0,\e_0)$, such
	that, for every $\e\in(0,\e_0)$, there is a bounded linear operator
	$E^\e:H^1(\O_p^\e)\longrightarrow H^1(\O)$ such that
	\[
	E^\e u=u \quad\text{a.e.\ in }\O_p^\e .
	\]
	Moreover,
	\begin{equation}\label{eq:extension-primal-estimates}
		\|E^\e u\|_{L^2(\O)}\leq C\|u\|_{L^2(\O_p^\e)},
		\qquad
		\|\nabla E^\e u\|_{L^2(\O)}\leq C\|\nabla u\|_{L^2(\O_p^\e)}
	\end{equation}
	for every $u\in H^1(\O_p^\e)$. Furthermore, if $u=0$ on $\partial\O$ in the
	sense of traces, then $E^\e u\in H^1_0(\O)$. The same operator, applied
	componentwise, extends vector fields, and there exists a further operator
	$\mathscr{E}^\e$ with the same properties that satisfies, in addition, the
	symmetric-gradient bound
	\begin{equation}\label{eq:extension-korn}
		\|D(\mathscr{E}^\e\bv)\|_{L^2(\O)}
		\leq C\|D(\bv)\|_{L^2(\O_p^\e)}
		\qquad\text{for every }\bv\in H^1(\O_p^\e)^3 .
	\end{equation}
\end{lemma}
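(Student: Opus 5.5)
The construction will be local, cell by cell, by scaling from a single reference extension operator. Fix a ball $B$ with $\overline{\Oc}\subset B$; for instance $B=B_{2R_{\Oc}}(0)$. Because $\Oc$ is a Lipschitz domain and $\R^3\setminus\overline{\Oc}$ is connected, the annular set $B\setminus\overline{\Oc}$ is a connected bounded Lipschitz domain. The standard Stein/Calder\'on extension then gives a bounded linear operator $P:H^1(B\setminus\overline{\Oc})\to H^1(B)$ with $Pu=u$ off $\Oc$.

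To obtain gradient bounds without a zeroth-order term, I will modify $P$ to
\[
P_0u:=m+P(u-m),\qquad m:=\frac{1}{|B\setminus\Oc|}\int_{B\setminus\Oc}u .
\]
By the Poincar\'e--Wirtinger inequality on the connected annulus,
$\|\nabla P_0u\|_{L^2(B)}\le C\|\nabla u\|_{L^2(B\setminus\Oc)}$ and $\|P_0u\|_{L^2(B)}\le C\|u\|_{L^2(B\setminus\Oc)}$. Both estimates are invariant under the dilation $x\mapsto x_{\e,\kappa}+a_\e x$ in three dimensions, since the $L^2$ norms of $u$ and of $\nabla u$ each scale by the same power on both sides of the respective inequality.

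For $\e<\e_0$, the dilated ball $x_{\e,\kappa}+a_\e B$ lies inside the cell $\e(\kappa+Y)$ at distance $\ge c_0\e/2$ from its boundary, by \eqref{eq:obstacle-separation} after possibly shrinking $\e_0$. I will define $E^\e u$ to equal $u$ on $\O_p^\e$ and to equal the rescaled $P_0$ applied to $u|_{x_{\e,\kappa}+a_\e(B\setminus\overline\Oc)}$ inside each obstacle $\Oc_{\e,\kappa}$. Since $P_0u=u$ on the annulus, the pieces glue to an $H^1(\O)$ function: traces match on each $\Gamma_s^\e$ component. Summing the local estimates over disjoint cells gives \eqref{eq:extension-primal-estimates} with a constant independent of $\e$ and $\kappa$. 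Linearity is clear. The extension does not touch a neighbourhood of $\partial\O$, because the obstacles stay away from $\partial\O$, so traces on $\partial\O$ are unchanged and $u\in H^1_0$ is preserved. Applying the construction componentwise handles vector fields.

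For $\mathscr E^\e$ I will repeat the construction, replacing constants by rigid motions. Let $\mathcal R=\{a+Wx: W^\top=-W\}$. Set $P_Du:=r+P(u-r)$ componentwise, where $r\in\mathcal R$ is the $L^2(B\setminus\Oc)$-projection of $u$ onto $\mathcal R$. Korn's second inequality on the connected Lipschitz annulus gives $\|u-r\|_{H^1(B\setminus\Oc)}\le C\|D(u)\|_{L^2(B\setminus\Oc)}$. Since $D(r)=0$, this yields $\|D(P_Du)\|_{L^2(B)}\le\|\nabla P(u-r)\|\le C\|D(u)\|_{L^2(B\setminus\Oc)}$. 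The bounds $\|\nabla P_Du\|\le C\|\nabla u\|$ and $\|P_Du\|\le C\|u\|$ follow as before, using $|\nabla r|\lesssim\|\nabla u\|$ and the $L^2$-stability of the projection. Both $D$ and $\nabla$ scale identically under dilation, so the rescaling and gluing argument gives \eqref{eq:extension-korn} uniformly.

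The main obstacle is the uniformity of the constants. It rests entirely on the facts that every obstacle is an exact dilate of the same $\Oc$ and that the dilated annuli are disjoint and contained in the pore space. One also has to check that the scaling exponents cancel in the $L^2$ estimate for the rigid-motion term: the rigid motion $r$ is re-expressed in physical variables, and its $L^2$ norm on the small ball is controlled by that on the annulus with an $\e$-independent constant, by norm equivalence on the finite-dimensional space $\mathcal R$.
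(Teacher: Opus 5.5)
Your proposal is correct and is essentially the paper's construction: on the perforated ball around each obstacle you subtract the mean (respectively the $L^2$-projection onto rigid displacements), extend the remainder with a fixed extension operator on the rescaled reference annulus using Poincar\'e--Wirtinger (respectively Korn) there, rescale back, and sum over the disjoint balls, with $E^\e u=u$ near $\partial\O$. The one ingredient worth stating explicitly is that the pure bound $\|P_0u\|_{L^2(B)}\le C\|u\|_{L^2(B\setminus\Oc)}$ requires $P$ to be bounded in $L^2$ as well as in $H^1$. Stein's universal extension has this property, and it is exactly what the paper flags; $H^1$-boundedness alone would give only $C(\|u\|_{L^2}+a_\e\|\nabla u\|_{L^2})$ after rescaling.
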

The operator $E^\e$ is constructed at the scale of the obstacles, following
\cite{CioranescuSJP1979,ConcaDonato1988}; see \cite[Lemma~3.1]{LuPokorny2021}
for this construction in domains with tiny holes. Fix $R>R_{\Oc}$, so that
$\overline{\Oc}\subset B_R$. Decreasing $\e_0$ if necessary, we may assume
$2Ra_\e<c_0\e$ for $\e<\e_0$; then, by \eqref{eq:obstacle-separation}, the
balls $B_{2Ra_\e}(x_{\e,\kappa})$, $\kappa\in\Kc_\e$, are mutually disjoint and
contained in $\O$ (see Figure~\ref{fig:solenoidal-restriction}). On each ball,
$u$ is written as its mean over
$B_{2Ra_\e}(x_{\e,\kappa})\setminus\overline{\Oc_{\e,\kappa}}$ plus a function of
zero mean; the latter is rescaled to the fixed perforated ball
$B_{2R}\setminus\overline{\Oc}$, extended there, and rescaled back. This yields,
with $C$ depending only on $\Oc$ and $R$,
\[
\|\nabla E^\e u\|_{L^2(\Oc_{\e,\kappa})}\le C\|\nabla u\|_{L^2(B_{2Ra_\e}(x_{\e,\kappa})\setminus \Oc_{\e,\kappa})},
\qquad
\|E^\e u\|_{L^2(\Oc_{\e,\kappa})}\le C\|u\|_{L^2(B_{2Ra_\e}(x_{\e,\kappa})\setminus \Oc_{\e,\kappa})},
\]
and summing over $\kappa$ gives \eqref{eq:extension-primal-estimates};
moreover, $E^\e u=u$ near $\partial\O$. In \cite{LuPokorny2021} the holes are
of class $C^2$; this is used only to obtain an extension operator on the
rescaled perforated ball uniformly with respect to different hole shapes. Here
all obstacles are homothetic to the single Lipschitz set $\Oc$, and on the fixed
Lipschitz domain $B_{2R}\setminus\overline{\Oc}$ an extension operator bounded
in $L^2$ and in $H^1$ exists \cite{AdamsFournier03}. The operator
$\mathscr E^\e$ is obtained in the same way, with the mean replaced by the
$L^2(B_{2Ra_\e}(x_{\e,\kappa})\setminus \Oc_{\e,\kappa})$-projection onto rigid
displacements and the Poincar\'e--Wirtinger inequality replaced by Korn's
inequality on $B_{2R}\setminus\overline{\Oc}$ \cite{oleinik01}. Rigid
displacements are invariant under the rescaling, which gives
\eqref{eq:extension-korn} with a constant independent of $\e$.

\begin{lemma}[Scalar fields]
	\label{lem:scalar-two-extensions}
	Let $v^\e\in\fC^\e$ satisfy
	\[
	\|v^\e\|_{L^\infty(S;H^1(\O_p^\e))}
	+\|\partial_tv^\e\|_{L^2(S;H^{-1}(\O_p^\e))}\leq C .
	\]
	Define the primal extension $\wt v^\e:=E^\e v^\e$ and the zero extension
	$\overline v^\e$ to $S\X\O$. Then
	\begin{equation}\label{ScalarPrimalBound}
		\|\wt v^\e\|_{L^\infty(S;H^1(\O))}\leq C,
		\qquad
		\overline v^\e\in H^1(S;H^{-1}(\O)),
		\qquad
		\|\partial_t\overline v^\e\|_{L^2(S;H^{-1}(\O))}\leq C .
	\end{equation}
	Moreover,
	\begin{equation}\label{ScalarExtensionsDifference}
		\|\wt v^\e-\overline v^\e\|_{L^\infty(S;L^2(\O))}
		\leq C\frac{a_\e}{\e} .
	\end{equation}
	Consequently, there exist $v\in\fC$ and a subsequence such that
	\begin{equation}\label{ScalarTwoExtensionStrongCompactness}
		\wt v^\e\longrightarrow v,
		\qquad
		\overline v^\e\longrightarrow v
		\qquad\text{strongly in }C([0,T];L^2(\O)).
	\end{equation}
\end{lemma}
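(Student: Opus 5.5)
The proof has four parts: the bounds on the two extensions, an estimate of their difference on the obstacles, compactness of the zero extension, and transfer of that compactness to $\wt v^\e$.

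\textbf{Primal extension.} The bound on $\wt v^\e$ follows pointwise in time from Lemma~\ref{Emain}. For a.e.\ $t$, \eqref{eq:extension-primal-estimates} gives $\|E^\e v^\e(t)\|_{H^1(\O)}\le C\|v^\e(t)\|_{H^1(\O_p^\e)}$. Taking the essential supremum in $t$ yields the first bound in \eqref{ScalarPrimalBound}. Measurability in time holds because $E^\e$ is linear and bounded.

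\textbf{Zero extension and its time derivative.} I use duality. For $\varphi\in H^1(\O)$, the restriction $\varphi|_{\O_p^\e}$ lies in $H^1(\O_p^\e)$ with norm at most $\|\varphi\|_{H^1(\O)}$. I define $\langle \partial_t\overline v^\e,\varphi\rangle:=\langle\partial_t v^\e,\varphi|_{\O_p^\e}\rangle$. To see that this is the distributional time derivative of $\overline v^\e$ in $H^{-1}(\O)=(H^1(\O))^\ast$, test against $\varphi\,\eta(t)$ with $\eta\in C_c^\infty(S)$:
\[
-\int_S(\overline v^\e,\varphi)_{L^2(\O)}\eta'\,\md t=-\int_S(v^\e,\varphi|_{\O_p^\e})_{L^2(\O_p^\e)}\eta'\,\md t=\int_S\langle\partial_tv^\e,\varphi|_{\O_p^\e}\rangle\eta\,\md t.
\]
This uses the embedding of $L^2(\O_p^\e)$ into $H^{-1}(\O_p^\e)$ as a Gelfand triple. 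The restriction map has norm at most $1$, so $\|\partial_t\overline v^\e\|_{L^2(S;H^{-1}(\O))}\le\|\partial_tv^\e\|_{L^2(S;H^{-1}(\O_p^\e))}\le C$. This proves the remaining two statements of \eqref{ScalarPrimalBound}.

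\textbf{Difference of the extensions.} The two extensions agree on $\O_p^\e$, so $\wt v^\e-\overline v^\e=\chi_s^\e\wt v^\e$. I would estimate this with Hölder's inequality and the Sobolev embedding $H^1(\O)\hookrightarrow L^6(\O)$:
\[
\|\chi_s^\e\wt v^\e(t)\|_{L^2(\O)}\le|\O_s^\e|^{1/3}\|\wt v^\e(t)\|_{L^6(\O)}\le C\e^{\alpha-1}\|\wt v^\e(t)\|_{H^1(\O)}\le C\frac{a_\e}{\e}.
\]
Here \eqref{eq:obstacle-volume} gives $|\O_s^\e|^{1/3}\le C\e^{\alpha-1}=Ca_\e/\e$. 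This is \eqref{ScalarExtensionsDifference}.

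\textbf{Compactness.} The main obstacle is that $\overline v^\e$ is not bounded in $H^1(\O)$, so Aubin--Lions--Simon cannot be applied to it directly. The primal extension, in turn, has no controlled time derivative. I would first apply Aubin--Lions--Simon to $\overline v^\e$ with the triple $H^1(\O)\Subset L^2(\O)\hookrightarrow H^{-1}(\O)$ in a weakened form. By \eqref{ScalarExtensionsDifference}, $\overline v^\e=\wt v^\e+r^\e$ with $\|r^\e\|_{L^\infty(S;L^2(\O))}\to0$, and $\wt v^\e$ is bounded in $L^\infty(S;H^1(\O))$.

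To obtain equicontinuity, for each $h>0$ I split
\[
\|\overline v^\e(t+h)-\overline v^\e(t)\|_{L^2}\le\|\wt v^\e(t+h)-\wt v^\e(t)\|_{L^2}+2\|r^\e\|_{L^\infty(S;L^2(\O))}.
\]
The first term is controlled through the interpolation inequality $\|w\|_{L^2}\le\eta\|w\|_{H^1}+C_\eta\|w\|_{H^{-1}}$ (Ehrling). I apply it with $w=\wt v^\e(t+h)-\wt v^\e(t)$, and bound $\|w\|_{H^{-1}}$ by the $H^{-1}$-increment of $\overline v^\e$ plus a term involving $r^\e$. The $H^{-1}$-increment of $\overline v^\e$ is at most $Ch^{1/2}$ by the bound on the time derivative. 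This gives equicontinuity in $C([0,T];L^2(\O))$ up to an error that vanishes as $\e\to0$.

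Pointwise relative compactness of $\{\wt v^\e(t)\}$ in $L^2(\O)$ comes from Rellich's theorem. An Arzelà--Ascoli argument, Simon's criterion in the version with vanishing perturbations, then shows that $\{\wt v^\e\}$ is precompact in $C([0,T];L^2(\O))$ modulo $o(1)$. Since the $o(1)$ term vanishes, the family is precompact. Passing to a subsequence, $\wt v^\e\to v$ in $C([0,T];L^2(\O))$, and by \eqref{ScalarExtensionsDifference} also $\overline v^\e\to v$ in the same space.

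Finally, weak-$\ast$ lower semicontinuity of the norm gives $v\in L^\infty(S;H^1(\O))$, and the strong convergence gives $v\in C([0,T];L^2(\O))$. Hence $v\in\fC$.
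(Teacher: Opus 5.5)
Your proposal is correct and follows the paper's proof step for step. It has the pointwise-in-time primal bound, the time derivative of the zero extension defined by restricting test functions, and the H\"older estimate $|\O_s^\e|^{1/3}\|\wt v^\e\|_{L^6}$ on the obstacles. It then obtains asymptotic equicontinuity of $\wt v^\e$ in $L^2(\O)$ by comparing its $H^{-1}$-increments with those of $\overline v^\e$, and concludes with Arzel\`a--Ascoli and the transfer to $\overline v^\e$. The only difference is cosmetic: you use Ehrling's lemma where the paper uses the multiplicative interpolation $\|q\|_{L^2}^2\le\|q\|_{H^1}\|q\|_{H^{-1}}$, which gives the same conclusion with an explicit modulus.
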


\begin{proof}
	The primal bound in \eqref{ScalarPrimalBound}$_1$ is
	\eqref{eq:extension-primal-estimates} applied for a.e.\ $t\in S$.
	
	The time derivative of the zero extension is obtained by restriction. For
	$\eta\in H^1(\O)$, set
	\[
	\bigl\langle\partial_t\overline v^\e,\eta\bigr\rangle_{H^{-1}(\O),H^1(\O)}
	:=
	\bigl\langle\partial_tv^\e,\eta|_{\O_p^\e}\bigr\rangle
	_{H^{-1}(\O_p^\e),H^1(\O_p^\e)} .
	\]
	Since $\O_s^\e$ is time-independent, this functional is the distributional
	derivative of $\overline v^\e$. From
	$\|\eta|_{\O_p^\e}\|_{H^1(\O_p^\e)}\leq\|\eta\|_{H^1(\O)}$ we obtain
	\[
	\|\partial_t\overline v^\e\|_{H^{-1}(\O)}
	\leq\|\partial_tv^\e\|_{H^{-1}(\O_p^\e)},
	\]
	which gives \eqref{ScalarPrimalBound}$_3$. Since the zero extension is an
	isometry from $L^2(\O_p^\e)$ into $L^2(\O)$, we also have
	$\overline v^\e\in L^\infty(S;L^2(\O))\subset L^2(S;H^{-1}(\O))$, and hence
	$\overline v^\e\in H^1(S;H^{-1}(\O))$, which is
	\eqref{ScalarPrimalBound}$_2$.
	
	Since $\wt v^\e=v^\e$ in $\O_p^\e$, the difference $\wt v^\e-\overline v^\e$
	vanishes in $\O_p^\e$ and equals $\wt v^\e$ in $\O_s^\e$. Hence, by H\"older's
	inequality with $\tfrac12=\tfrac13+\tfrac16$ and the Sobolev embedding
	$H^1(\O)\hookrightarrow L^6(\O)$,
	\[
	\|\wt v^\e-\overline v^\e\|_{L^\infty(S;L^2(\O))}
	\leq|\O_s^\e|^{1/3}\|\wt v^\e\|_{L^\infty(S;L^6(\O))}
	\leq C|\O_s^\e|^{1/3} .
	\]
	By \eqref{eq:obstacle-volume}, $|\O_s^\e|^{1/3}\leq C\e^{\alpha-1}=Ca_\e/\e$,
	which is \eqref{ScalarExtensionsDifference}.
	
	By the standard Hilbert-triple result, $v^\e\in C([0,T];L^2(\O_p^\e))$; since
	both extension operators are time-independent and bounded on the corresponding
	$L^2$ spaces, $\wt v^\e,\overline v^\e\in C([0,T];L^2(\O))$. Set
	\[
	d_\e:=\|\wt v^\e-\overline v^\e\|_{L^\infty(S;L^2(\O))},
	\]
	so that $d_\e\to0$ by \eqref{ScalarExtensionsDifference}. For $t,s\in[0,T]$,
	the interpolation inequality
	$\|q\|^2_{L^2(\O)}\leq\|q\|_{H^1(\O)}\|q\|_{H^{-1}(\O)}$ applied to
	$q=\wt v^\e(t)-\wt v^\e(s)$ gives, using the uniform $H^1$ bound in
	\eqref{ScalarPrimalBound},
	\begin{align*}
		\|\wt v^\e(t)-\wt v^\e(s)\|^2_{L^2(\O)}
		&\leq C\|\wt v^\e(t)-\wt v^\e(s)\|_{H^{-1}(\O)}
		\leq C\bigl(\|\overline v^\e(t)-\overline v^\e(s)\|_{H^{-1}(\O)}
		+2d_\e\bigr)\\
		&\leq C\bigl(|t-s|^{1/2}+2d_\e\bigr),
	\end{align*}
	the last step by the time-derivative bound in \eqref{ScalarPrimalBound} and
	H\"older's inequality in time. Thus $\{\wt v^\e\}$ is asymptotically uniformly
	equicontinuous in $L^2(\O)$: given $\gamma>0$, choose $\e_1$ with
	$2Cd_\e<\gamma^2/2$ for $\e<\e_1$ and then $|t-s|$ small enough that
	$C|t-s|^{1/2}<\gamma^2/2$; along the sequence $\e\to0$, only finitely many
	indices satisfy $\e\geq\e_1$, and these are equicontinuous individually. Since
	$\{\wt v^\e\}$ is uniformly bounded in $L^\infty(S;H^1(\O))$ and
	$H^1(\O)\Subset L^2(\O)$, the Arzel\`a--Ascoli theorem gives, after
	extraction,
	\[
	\wt v^\e\longrightarrow v \qquad\text{strongly in }C([0,T];L^2(\O)).
	\]
	The uniform bound in $L^\infty(S;H^1(\O))$ also gives, after a further
	extraction, $\wt v^\e\overset{*}{\rightharpoonup}v$ in
	$L^\infty(S;H^1(\O))$, whence $v\in\fC$. The convergence of
	$\overline v^\e$ follows from \eqref{ScalarExtensionsDifference}.
\end{proof}

\begin{lemma}[Solenoidal fields]\label{LemD02}
	There exist $\e_0\in(0,1)$ and $C>0$, independent of $\e\in(0,\e_0)$, such that
	for every $\e\in(0,\e_0)$ there is a linear operator
	$\Ec^\e:\GH^1_{\div}(\O_p^\e)\to H^1_{0,\div}(\O)^3$ with
	\begin{equation}\label{SolenoidalPrimalExtensionIdentity}
		(\Ec^\e v)|_{\O_p^\e}=v\qquad\forall\,v\in \GH^1_{\div}(\O_p^\e),
	\end{equation}
	\begin{equation}\label{SolenoidalPrimalExtensionBounds}
		\|\Ec^\e v\|_{L^2(\O)}
		\leq C\bigl(\|v\|_{L^2(\O_p^\e)}+a_\e\|\nabla v\|_{L^2(\O_p^\e)}\bigr),
		\qquad
		\|\nabla\Ec^\e v\|_{L^2(\O)}\leq C\|\nabla v\|_{L^2(\O_p^\e)},
	\end{equation}
	and, in particular,
	\begin{equation}\label{SolenoidalPrimalExtensionH1Bound}
		\|\Ec^\e v\|_{H^1(\O)}\leq C\|v\|_{H^1(\O_p^\e)} .
	\end{equation}
	For $v\in \GH^1_{\div}(\O_p^\e)$, let $\overline v$ be its zero extension
	through the obstacles. Then
	\begin{equation}\label{SolenoidalZeroExtension}
		\|\overline v\|_{L^2(\O)}=\|v\|_{L^2(\O_p^\e)},
		\qquad
		\div\,\overline v=0\ \text{ in }\Dc'(\O) .
	\end{equation}
	Moreover, for every $v^\e\in L^2(S;\GH^1_{\div}(\O_p^\e))$,
	\begin{equation}\label{PrimalZeroExtensionDifference}
		\|\Ec^\e v^\e-\overline v^\e\|_{L^2(S\X\O)}
		\leq C\,\frac{a_\e}{\e}\,\|v^\e\|_{L^2(S;H^1(\O_p^\e))} ,
	\end{equation}
	so that if $\sup_{\e\in(0,\e_0)}\|v^\e\|_{L^2(S;H^1(\O_p^\e))}<\infty$, then
	$\Ec^\e v^\e-\overline v^\e\to0$ strongly in $L^2(S\X\O)^3$.
\end{lemma}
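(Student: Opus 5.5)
We construct $\Ec^\e$ obstacle by obstacle, at the scale $a_\e$, in the same way as the primal extension $E^\e$ of Lemma~\ref{Emain}, and then correct the divergence inside each obstacle. Fix $R>R_{\Oc}$ as in Lemma~\ref{Emain}, so that the balls $B_\kappa:=B_{2Ra_\e}(x_{\e,\kappa})$ are disjoint and contained in $\O$, and write $A_\kappa:=B_\kappa\setminus\overline{\Oc_{\e,\kappa}}$. Let $v\in\GH^1_{\div}(\O_p^\e)$.

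\textbf{Rescaling and mean subtraction.} We rescale $v|_{A_\kappa}$ to the fixed perforated ball $A:=B_{2R}\setminus\overline{\Oc}$ by setting $w(y):=v(x_{\e,\kappa}+a_\e y)$. Since $v\cdot\Gn^\e=0$ on $\partial\Oc_{\e,\kappa}$ and $\div v=0$, the divergence theorem gives $\int_{\partial\Oc}w\cdot\bn=0$. This is the key compatibility condition. We subtract the constant mean $c_\kappa$ of $w$ over $A$. Because $c_\kappa$ is constant, $\int_{\partial\Oc}c_\kappa\cdot\bn=0$, so $w-c_\kappa$ still has zero flux through $\partial\Oc$.

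\textbf{Solenoidal extension on the fixed domain.} Take any bounded $H^1$-extension $P$ on the Lipschitz domain $A$ into $B_{2R}$ (\cite{AdamsFournier03}). Set $g:=-\div P(w-c_\kappa)$ in $\Oc$. Then
\[
\int_\Oc g=-\int_{\partial\Oc}(w-c_\kappa)\cdot\bn=0.
\]
By the Bogovski\u{\i} operator on the Lipschitz domain $\Oc$ (assuming $\Oc$ connected, or treating each component separately), there is $z\in H^1_0(\Oc)^3$ with $\div z=g$ and $\|z\|_{H^1(\Oc)}\le C\|g\|_{L^2(\Oc)}\le C\|\nabla w\|_{L^2(A)}$. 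Here Poincar\'e--Wirtinger on $A$ is used to control $\|w-c_\kappa\|_{H^1(A)}$.

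\textbf{Definition of $\Ec^\e$.} Inside $\Oc$ we define the extended field as $c_\kappa+P(w-c_\kappa)+z$. Since $z$ vanishes on $\partial\Oc$, the traces match, so the glued field lies in $H^1$ and is divergence free. Scaling back, we set $\Ec^\e v$ equal to this field in $\Oc_{\e,\kappa}$ and to $v$ in $\O_p^\e$. Then $\Ec^\e v\in H^1_{0,\div}(\O)^3$, since the obstacles do not touch $\partial\O$, and \eqref{SolenoidalPrimalExtensionIdentity} holds.

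\textbf{Bounds on $\Ec^\e$.} The gradient bound scales exactly:
\[
\|\nabla\Ec^\e v\|_{L^2(\Oc_{\e,\kappa})}\le C\|\nabla v\|_{L^2(A_\kappa)}.
\]
Summing over $\kappa$ gives the second part of \eqref{SolenoidalPrimalExtensionBounds}. For the $L^2$ bound, write $\Ec^\e v-c_\kappa$ in the obstacle; its $L^2$ norm is at most $Ca_\e\|\nabla v\|_{L^2(A_\kappa)}$ by scaling. Moreover $|c_\kappa||\Oc_{\e,\kappa}|^{1/2}\le C\|v\|_{L^2(A_\kappa)}$, since $|A_\kappa|$ and $|\Oc_{\e,\kappa}|$ are comparable. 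Together these give the first part of \eqref{SolenoidalPrimalExtensionBounds}, and \eqref{SolenoidalPrimalExtensionH1Bound} follows.

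\textbf{Zero extension.} The isometry in \eqref{SolenoidalZeroExtension} is trivial. For $\div\,\overline v=0$, test with $\eta\in C_c^\infty(\O)$:
\[
\int_{\O_p^\e}v\cdot\nabla\eta=\int_{\Gamma_s^\e}\eta\, v\cdot\Gn^\e=0,
\]
using $\div v=0$, $v\cdot\Gn^\e=0$ on $\Gamma_s^\e$, and $\eta=0$ near $\partial\O$.

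\textbf{Difference estimate.} The difference $\Ec^\e v-\overline v$ is supported in $\O_s^\e$. We proceed exactly as in Lemma~\ref{lem:scalar-two-extensions}, using H\"older with exponents $\tfrac12=\tfrac13+\tfrac16$ and the embedding $H^1_0\hookrightarrow L^6$:
\[
\|\Ec^\e v\|_{L^2(\O_s^\e)}\le|\O_s^\e|^{1/3}\|\Ec^\e v\|_{L^6(\O)}\le C\frac{a_\e}{\e}\|v\|_{H^1(\O_p^\e)}.
\]
Here the last step uses \eqref{SolenoidalPrimalExtensionH1Bound}. Integrating in time gives \eqref{PrimalZeroExtensionDifference}, and the stated convergence follows since $a_\e/\e\to0$.

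\textbf{Main obstacle.} The delicate step is the flux compatibility together with the Bogovski\u{\i} correction on a Lipschitz, possibly disconnected, $\Oc$. One must check that the flux of $v$ vanishes through each component of the obstacle boundary; this holds because $v\cdot\Gn^\e=0$ holds pointwise there. One must also check that all constants (extension, Poincar\'e, Bogovski\u{\i}) live on the fixed domains $A$ and $\Oc$, so that scaling makes them independent of $\e$. The remaining estimates are routine scaling arguments.
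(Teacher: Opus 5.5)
Your proof is correct and takes essentially the paper's route: a cellwise mean-subtracted, rescaled extension on the fixed perforated ball (exactly how the paper builds $E^\e$ in Lemma~\ref{Emain}, which it then uses as a black box), followed by a rescaled Bogovski\u{\i} correction inside each obstacle whose compatibility condition comes from the impermeability condition, and then the same H\"older--Sobolev argument with $|\O_s^\e|^{1/3}\le Ca_\e/\e$ for the primal--zero difference. The only cosmetic difference is that your $a_\e\|\nabla v\|$ term in the $L^2$ bound comes from Poincar\'e--Wirtinger on the perforated ball combined with scaling, whereas the paper takes it from the $L^2$ estimate of the rescaled Bogovski\u{\i} operator.
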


\begin{proof}
	\textbf{Step 1: A Bogovski\u{\i} operator on the obstacles.}
	Recall that $\Oc_{\e,\kappa}=x_{\e,\kappa}+a_\e\Oc$ with
	$x_{\e,\kappa}=\e(\kappa+y_0)$ and $a_\e=\e^\alpha$. Let
	\[
	L^2_{0,\mathrm{comp}}(\O_s^\e)
	:=\Bigl\{g\in L^2(\O_s^\e):
	\int_{\Oc_{\e,\kappa}}g\,\md x=0\ \text{for every }\kappa\in\Kc_\e\Bigr\}
	\]
	be the space of functions with zero mean on each connected component of
	$\O_s^\e$; then $L^2_{0,\mathrm{comp}}(\O_s^\e)\subset L^2_0(\O_s^\e)$. We
	claim that there is a linear operator
	\[
	\Bc_s^\e:L^2_{0,\mathrm{comp}}(\O_s^\e)\longrightarrow H^1_0(\O_s^\e)^3
	\]
	such that $\Psi:=\Bc_s^\e(g)$ satisfies $\div\Psi=g$ in $\O_s^\e$ and
	$\Psi=0$ on $\partial\O_s^\e$, together with, for some $C_s>0$ independent of
	$\e$, $\kappa$ and $g$,
	\begin{equation}\label{Bogov02+-gradient}
		\|\nabla\Bc_s^\e(g)\|_{L^2(\O_s^\e)}\leq C_s\|g\|_{L^2(\O_s^\e)},
		\qquad
		\|\Bc_s^\e(g)\|_{L^2(\O_s^\e)}\leq C_s a_\e\|g\|_{L^2(\O_s^\e)} .
	\end{equation}
	Indeed, let $\Bc_{\Oc}:L^2_0(\Oc)\to H^1_0(\Oc)^3$ be a fixed Bogovski\u{\i}
	operator on the reference obstacle, satisfying $\div_z\Bc_{\Oc}(f)=f$ and
	$\|\Bc_{\Oc}(f)\|_{H^1(\Oc)}\leq C\|f\|_{L^2(\Oc)}$; such an operator exists
	since $\Oc$ is a bounded Lipschitz domain \cite{BogovM01,galdi2011}. For
	$\kappa\in\Kc_\e$ set $g_{\e,\kappa}(z):=g(x_{\e,\kappa}+a_\e z)$,
	$z\in\Oc$, which belongs to $L^2_0(\Oc)$ by the componentwise mean-zero
	condition, and define, for $x\in \Oc_{\e,\kappa}$,
	\[
	\Bc^\e_s(g)(x)
	:=a_\e\,\Bc_{\Oc}(g_{\e,\kappa})
	\Bigl(\frac{x-x_{\e,\kappa}}{a_\e}\Bigr) .
	\]
	This is linear, maps into $H^1_0(\O_s^\e)^3$, and satisfies
	$\div\Bc^\e_s(g)=g$ in $\O_s^\e$. A change of variables on each
	obstacle turns the reference estimate into
	\[
	\|\nabla\Bc^\e_s(g)\|_{L^2(\Oc_{\e,\kappa})}
	\leq C\|g\|_{L^2(\Oc_{\e,\kappa})},
	\qquad
	\|\Bc^\e_s(g)\|_{L^2(\Oc_{\e,\kappa})}
	\leq Ca_\e\|g\|_{L^2(\Oc_{\e,\kappa})},
	\]
	with $C$ depending only on $\Oc$, since all obstacles are homothetic copies of
	$\Oc$ with the same ratio $a_\e$. Squaring and summing over
	$\kappa\in\Kc_\e$, which is legitimate because the obstacles are mutually
	disjoint, yields \eqref{Bogov02+-gradient} with $C_s$ independent of $\e$.
	
	\smallskip
	\textbf{Step 2: The solenoidal extension.}
	Let $v\in \GH^1_{\div}(\O_p^\e)$ and set $w:=(E^\e v_1,E^\e v_2,E^\e v_3)$,
	with $E^\e$ the scalar extension of Lemma~\ref{Emain}. Then
	$w|_{\O_p^\e}=v$, $w=0$ on $\partial\O$, and
	\begin{equation}\label{VectorScalarExtensionEstimates}
		\|w\|_{L^2(\O)}\leq C\|v\|_{L^2(\O_p^\e)},
		\qquad
		\|\nabla w\|_{L^2(\O)}\leq C\|\nabla v\|_{L^2(\O_p^\e)} .
	\end{equation}
	Set $g:=(\div w)|_{\O_s^\e}$. For each $\kappa\in\Kc_\e$ the traces of $w$
	from the solid side and of $v$ from the fluid side agree on
	$\partial \Oc_{\e,\kappa}$, so with $\Gn_s^\e=-\Gn^\e$ and the impermeability
	condition $v\cdot\Gn^\e=0$ on $\Gamma_s^\e$,
	\[
	\int_{\Oc_{\e,\kappa}}g\,\md x
	=\int_{\partial \Oc_{\e,\kappa}}w\cdot\Gn_s^\e\,\md S
	=-\int_{\partial \Oc_{\e,\kappa}}v\cdot\Gn^\e\,\md S=0 ,
	\]
	hence $g\in L^2_{0,\mathrm{comp}}(\O_s^\e)$. This is the first use of the
	impermeability condition in the definition of $\GH^1_{\div}(\O_p^\e)$, and it
	is what makes the Bogovski\u{\i} correction of Step~1 available.
	
	Let $z_\e:=\Bc_s^\e(g)\in H^1_0(\O_s^\e)^3$ and extend $z_\e$ by zero
	to $\O$; since $\O_s^\e\Subset\O$ and $z_\e$ has zero trace on
	$\partial\O_s^\e$, this zero extension belongs to $H^1_0(\O)^3$. The bounds
	\eqref{Bogov02+-gradient} together with
	$\|g\|_{L^2(\O_s^\e)}\leq C\|\nabla w\|_{L^2(\O_s^\e)}$ give
	\begin{equation}\label{SolidCorrectionH1Bounds}
		\|\nabla z_\e\|_{L^2(\O_s^\e)}\leq C\|\nabla w\|_{L^2(\O_s^\e)},
		\qquad
		\|z_\e\|_{L^2(\O_s^\e)}\leq Ca_\e\|\nabla w\|_{L^2(\O_s^\e)} .
	\end{equation}
	Define $\Ec^\e v:=w-z_\e$. Since $z_\e=0$ in $\O_p^\e$, we have
	$(\Ec^\e v)|_{\O_p^\e}=v$, which is
	\eqref{SolenoidalPrimalExtensionIdentity}, and
	\[
	\div(\Ec^\e v)=
	\begin{cases}
		\div v=0, & \text{in }\O_p^\e,\\
		g-\div z_\e=0, & \text{in }\O_s^\e .
	\end{cases}
	\]
	Since $w,z_\e\in H^1(\O)^3$, we have $\Ec^\e v\in H^1(\O)^3$, so the two
	identities above imply $\div(\Ec^\e v)=0$ in $\Dc'(\O)$; no interface term
	arises, because $\Ec^\e v$ has no jump across $\Gamma_s^\e$. Moreover,
	$\Ec^\e v=0$ on $\partial\O$, since $w=0$ there and $z_\e$ is supported in
	$\O_s^\e\Subset\O$. Thus $\Ec^\e v\in H^1_{0,\div}(\O)^3$, and $\Ec^\e$ is
	linear by the linearity of $E^\e$, of the divergence, and of $\Bc_s^\e$.
	Combining \eqref{VectorScalarExtensionEstimates} and
	\eqref{SolidCorrectionH1Bounds},
	\[
	\|\Ec^\e v\|_{L^2(\O)}\leq\|w\|_{L^2(\O)}+\|z_\e\|_{L^2(\O_s^\e)}
	\leq C\bigl(\|v\|_{L^2(\O_p^\e)}+a_\e\|\nabla v\|_{L^2(\O_p^\e)}\bigr),
	\qquad
	\|\nabla\Ec^\e v\|_{L^2(\O)}\leq C\|\nabla v\|_{L^2(\O_p^\e)},
	\]
	which is \eqref{SolenoidalPrimalExtensionBounds}; adding these and using
	$a_\e\leq1$ gives \eqref{SolenoidalPrimalExtensionH1Bound}.
	
	\smallskip
	\textbf{Step 3: The zero extension.}
	The norm identity in \eqref{SolenoidalZeroExtension} is immediate. For
	$\varphi\in\Dc(\O)$, integration by parts on $\O_p^\e$ gives
	\[
	\int_\O\overline v\cdot\nabla\varphi\,\md x
	=\int_{\O_p^\e}v\cdot\nabla\varphi\,\md x
	=-\int_{\O_p^\e}(\div v)\varphi\,\md x
	+\int_{\Gamma_s^\e}(v\cdot\Gn^\e)\varphi\,\md S
	=0 ,
	\]
	both terms vanishing by $\div v=0$ and $v\cdot\Gn^\e=0$ on $\Gamma_s^\e$; the
	contribution of $\partial\O$ is absent because $\varphi$ has compact support.
	This is \eqref{SolenoidalZeroExtension}$_2$, and it is the second use of the
	impermeability condition: without it, the zero extension would fail to be
	divergence-free across $\Gamma_s^\e$.
	
	Finally, since $\Ec^\e v^\e=v^\e$ in $\O_p^\e$, the difference
	$\Ec^\e v^\e-\overline v^\e$ vanishes on $\O_p^\e$ and equals $\Ec^\e v^\e$ on
	$\O_s^\e$. By H\"older's inequality with $\tfrac12=\tfrac13+\tfrac16$, the
	Sobolev embedding $H^1(\O)\hookrightarrow L^6(\O)$ and
	\eqref{SolenoidalPrimalExtensionH1Bound},
	\[
	\|\Ec^\e v^\e-\overline v^\e\|_{L^2(S\X\O)}
	=\|\Ec^\e v^\e\|_{L^2(S\X\O_s^\e)}
	\leq|\O_s^\e|^{1/3}\|\Ec^\e v^\e\|_{L^2(S;L^6(\O))}
	\leq C|\O_s^\e|^{1/3}\|v^\e\|_{L^2(S;H^1(\O_p^\e))} .
	\]
	By \eqref{eq:obstacle-volume}, $|\O_s^\e|^{1/3}\leq Ca_\e/\e$, which gives
	\eqref{PrimalZeroExtensionDifference}. As $a_\e/\e=\e^{\alpha-1}\to0$, the
	stated convergence follows.
\end{proof}

\begin{lemma}
	\label{lem:uniform-ineq}
	There exist $\e_0\in(0,1)$ and $C>0$, independent of $\e\in(0,\e_0)$, such
	that for every $\e\in(0,\e_0)$, every $\bv\in\GH^1(\O_p^\e)$, every
	$u\in H^1(\O^\e_p)$ and every $r\in[2,6]$,
	\begin{alignat}{2}
		\|\bv\|_{L^2(\O_p^\e)}&\leq C\|\nabla\bv\|_{L^2(\O_p^\e)},
		&\qquad
		\|\bv\|_{L^2(\O_p^\e)}+\|\nabla\bv\|_{L^2(\O_p^\e)}
		&\leq C\|D(\bv)\|_{L^2(\O_p^\e)},
		\label{eq:Poincare-Korn}\\
		\|u\|_{L^2(\O^\e_p)}&\leq C\|\nabla u\|_{L^2(\O^\e_p)}
		\quad\text{if }u\in L^2_0(\O^\e_p),
		&\qquad
		\|u\|_{L^r(\O^\e_p)}&\leq C\|u\|_{H^1(\O^\e_p)} .
		\label{eq:PW-Sobolev}
	\end{alignat}
\end{lemma}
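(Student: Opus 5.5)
All four inequalities will be derived by transporting the corresponding fixed-domain inequalities on $\O$ through the extension operators of Lemma~\ref{Emain}, since these operators carry constants independent of $\e$.

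\textbf{Sobolev inequality.} For $u\in H^1(\O_p^\e)$ we have $E^\e u=u$ on $\O_p^\e$. Hence
\[
\|u\|_{L^r(\O_p^\e)}\le\|E^\e u\|_{L^r(\O)}\le C\|E^\e u\|_{H^1(\O)}\le C\|u\|_{H^1(\O_p^\e)},
\]
using the embedding $H^1(\O)\hookrightarrow L^r(\O)$ for $r\in[2,6]$ on the fixed Lipschitz domain $\O$ together with \eqref{eq:extension-primal-estimates}.

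\textbf{Poincar\'e inequality for $\bv\in\GH^1(\O_p^\e)$.} Apply $E^\e$ componentwise. Since $\bv=0$ on $\partial\O$, Lemma~\ref{Emain} gives $E^\e\bv\in H^1_0(\O)^3$, and the Poincar\'e inequality on $\O$ yields
\[
\|\bv\|_{L^2(\O_p^\e)}\le\|E^\e\bv\|_{L^2(\O)}\le C\|\nabla E^\e\bv\|_{L^2(\O)}\le C\|\nabla\bv\|_{L^2(\O_p^\e)}.
\]

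\textbf{Korn inequality.} Use instead the operator $\mathscr E^\e$. I assume, as for $E^\e$, that $\mathscr E^\e\bv\in H^1_0(\O)^3$ when $\bv=0$ on $\partial\O$. This holds because the construction modifies $\bv$ only inside the balls $B_{2Ra_\e}(x_{\e,\kappa})\Subset\O$, so $\mathscr E^\e\bv=\bv$ near $\partial\O$. Korn's first inequality on $H^1_0(\O)^3$, $\|\nabla\bw\|_{L^2(\O)}\le\sqrt2\|D(\bw)\|_{L^2(\O)}$, is proved by integrating by parts and involves no constant depending on the domain. Combining it with \eqref{eq:extension-korn} gives
\[
\|\nabla\bv\|_{L^2(\O_p^\e)}\le\|\nabla\mathscr E^\e\bv\|_{L^2(\O)}\le\sqrt2\,\|D(\mathscr E^\e\bv)\|_{L^2(\O)}\le C\|D(\bv)\|_{L^2(\O_p^\e)}.
\]
The $L^2$ part of the second inequality in \eqref{eq:Poincare-Korn} then follows from the Poincar\'e inequality just proved.

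\textbf{Poincar\'e--Wirtinger inequality.} This is the main obstacle, because the extension does not preserve the zero mean. Let $u\in L^2_0(\O_p^\e)$, set $U:=E^\e u$, and let $m:=|\O|^{-1}\int_\O U$. The Poincar\'e--Wirtinger inequality on the connected Lipschitz domain $\O$ gives
\[
\|U-m\|_{L^2(\O)}\le C\|\nabla U\|_{L^2(\O)}\le C\|\nabla u\|_{L^2(\O_p^\e)}.
\]
Since $\int_{\O_p^\e}u=0$, we have $\|u\|_{L^2(\O_p^\e)}\le\|u-c\|_{L^2(\O_p^\e)}$ for every constant $c$, because the mean minimizes the $L^2$ distance to constants. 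Taking $c=m$,
\[
\|u\|_{L^2(\O_p^\e)}\le\|U-m\|_{L^2(\O_p^\e)}\le\|U-m\|_{L^2(\O)}\le C\|\nabla u\|_{L^2(\O_p^\e)}.
\]
This closes the argument without needing any control of $m$.

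All constants depend only on $\O$, $\Oc$, $R$ and the constants of Lemma~\ref{Emain}, and are therefore independent of $\e$.
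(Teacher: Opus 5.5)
Your proof is correct and follows the paper's route: each inequality is transported from the fixed domain $\O$ through $E^\e$ (Poincar\'e, Sobolev, Poincar\'e--Wirtinger) or $\mathscr E^\e$ (Korn), and your use of $\|\nabla\bw\|_{L^2(\O)}\le\sqrt2\,\|D(\bw)\|_{L^2(\O)}$ on $H^1_0(\O)^3$ simply fills in the Korn step that the paper only attributes to the symmetric-gradient bound of Lemma~\ref{Emain}. The one difference is in the Poincar\'e--Wirtinger step: the paper bounds $|m_\e|$ using $\int_{\O_p^\e}u\,\md x=0$ together with $|\O_p^\e|\ge\frac12|\O|$, whereas you use that the mean minimizes $\|u-c\|_{L^2(\O_p^\e)}$ over constants $c$, which gives the same estimate more directly and needs no lower bound on $|\O_p^\e|$.
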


\begin{proof}
	For \eqref{eq:Poincare-Korn}$_1$ and \eqref{eq:PW-Sobolev}$_2$, we extend by
	$E^\e$ and apply the corresponding inequality on the fixed domain $\O$.
	For $\bv\in\GH^1(\O_p^\e)$ we have $E^\e\bv\in H^1_0(\O)^3$ by
	Lemma~\ref{Emain}, since $\bv=0$ on $\partial\O$, so Poincar\'e's inequality
	on $\O$ and \eqref{eq:extension-primal-estimates} give
	\[
	\|\bv\|_{L^2(\O_p^\e)}\leq\|E^\e\bv\|_{L^2(\O)}
	\leq C\|\nabla E^\e\bv\|_{L^2(\O)}
	\leq C\|\nabla\bv\|_{L^2(\O_p^\e)} ,
	\]
	and similarly for the Sobolev embedding. For \eqref{eq:PW-Sobolev}$_1$, note
	that $E^\e u$ need not have zero mean on $\O$. Writing
	$m_\e:=|\O|^{-1}\int_\O E^\e u\,\md x$ and applying the Poincar\'e--Wirtinger
	inequality to $E^\e u-m_\e$ gives
	$\|E^\e u-m_\e\|_{L^2(\O)}\leq C\|\nabla u\|_{L^2(\O_p^\e)}$, while
	$\int_{\O_p^\e}u\,\md x=0$ yields
	$|m_\e|\,|\O_p^\e|\leq\|E^\e u-m_\e\|_{L^2(\O)}|\O|^{1/2}$. Since
	$|\O_p^\e|\geq\tfrac12|\O|$ for $\e$ small by \eqref{eq:obstacle-volume},
	the two estimates combine to give \eqref{eq:PW-Sobolev}$_1$.
	
	The uniform Korn inequality \eqref{eq:Poincare-Korn}$_2$ does not follow from
	$E^\e$, which is bounded with respect to the full gradient only. It follows
	from the extension operator $\mathscr{E}^\e$ of \eqref{eq:extension-korn}.
	See
	\cite{slip1991allairehomogenization,LuUniformStokes21,oleinik01,Ext01,Ext02}
	for these and related inequalities on periodically perforated domains.
\end{proof}

\subsection{A solenoidal restriction operator}
\label{sec:test_functions}

A fixed field $\zeta\in C^\infty_{c,\div}(\O)^3$ is not admissible in the
momentum equation \eqref{weak-momentum1}: its restriction to $\O_p^\e$ is
divergence-free and vanishes on $\partial\O$, but it does not satisfy the
impermeability condition $\zeta\cdot\Gn^\e=0$ on $\Gamma_s^\e$, so it does not
belong to $\GH^1_{\div}(\O_p^\e)$. The following lemma provides a solenoidal
correction of $\zeta$ that vanishes near every obstacle and converges strongly
to $\zeta$ in $H^1(\O)^3$.

\begin{figure}[ht]
	\centering
	\begin{tikzpicture}[scale=1.0,>=latex]
		\draw[thick] (-2.6,-2.6) rectangle (2.6,2.6);
		\node at (-1.15,2.25) {$\e(\kappa+Y)$};
		
		\fill[gray!12] (0,0) circle (2.0);
		\draw[thick] (0,0) circle (2.0);
		
		\fill[white] (0,0) circle (1.1);
		\draw[thick] (0,0) circle (1.1);
		
		\fill[pattern=north east lines, pattern color=black!60]
		plot[smooth cycle,tension=0.9]
		coordinates {(-0.42,0.10) (-0.16,0.44) (0.34,0.32)
			(0.46,-0.12) (0.10,-0.44) (-0.34,-0.30)};
		\draw[thick]
		plot[smooth cycle,tension=0.9]
		coordinates {(-0.42,0.10) (-0.16,0.44) (0.34,0.32)
			(0.46,-0.12) (0.10,-0.44) (-0.34,-0.30)};
		
		\fill (0,0) circle (1.1pt);
		
		\draw[->] (0,0) -- (78:1.1);
		\node at (66:0.72) {\footnotesize $Ra_\e$};
		\draw[->] (0,0) -- (140:2.0);
		\node at (146:1.35) {\footnotesize $2Ra_\e$};
		
		\node at (0,-0.85) {\footnotesize $\psi^\e=0$};
		\node[align=center] at (0,1.55)
		{\footnotesize $\psi^\e=\eta_{\e,\kappa}\zeta-c_{\e,\kappa}$};
		\node at (1.75,-2.15) {\footnotesize $\psi^\e=\zeta$};
		
		\draw[->,shorten >=2pt] (3.55,1.15) -- (1.55,0.45);
		\node[anchor=west] at (3.55,1.15) {\footnotesize $A_{\e,\kappa}$};
		
		\draw[->,shorten >=2pt] (3.55,0.15) -- (0.50,0.05);
		\node[anchor=west] at (3.55,0.15) {\footnotesize $\Oc_{\e,\kappa}$};
		
		\draw[->,shorten >=2pt] (3.55,-0.85) -- (0.06,-0.06);
		\node[anchor=west] at (3.55,-0.85) {\footnotesize $x_{\e,\kappa}$};
	\end{tikzpicture}
	\caption{Cross-section of the cell $\e(\kappa+Y)$ used in the construction
		of $\Rc^\e$ (not to scale: $a_\e\ll\e$).}
	\label{fig:solenoidal-restriction}
\end{figure}

\begin{lemma}[Subcritical solenoidal restriction]
	\label{lem:solenoidal-restriction}
	There exist $\e_0\in(0,1)$ and, for every
	$\e\in(0,\e_0)$, a linear map
	\[
	\Rc^\e:C^\infty_{c,\div}(\O)^3\longrightarrow\GH^1_{\div}(\O_p^\e),
	\]
	independent of time, such that for every $\zeta\in C^\infty_{c,\div}(\O)^3$
	the field $\Rc^\e\zeta$ vanishes on a neighborhood of $\Gamma_s^\e$, its
	zero extension $\overline{\Rc^\e\zeta}$ belongs to $H^1_{0,\div}(\O)^3$, and
	\begin{equation}\label{SolenoidalRestrictionConvergence}
		\overline{\Rc^\e\zeta}\longrightarrow\zeta
		\quad\text{strongly in }H^1(\O)^3,
		\qquad
		\|\Rc^\e\zeta\|_{H^1(\O_p^\e)}\leq C_\zeta ,
	\end{equation}
	with $C_\zeta>0$ independent of $\e$.
\end{lemma}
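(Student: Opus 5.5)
The construction follows Figure~\ref{fig:solenoidal-restriction}. We cut $\zeta$ off near every obstacle and repair the divergence by a Bogovski\u{\i} correction on the annuli $A_{\e,\kappa}:=B_{2Ra_\e}(x_{\e,\kappa})\setminus\overline{B_{Ra_\e}(x_{\e,\kappa})}$. Fix $R>R_{\Oc}$ and reduce $\e_0$ so that $2Ra_\e<c_0\e$. By \eqref{eq:obstacle-separation} the balls $B_{2Ra_\e}(x_{\e,\kappa})$ are then disjoint and lie in $\O$. Fix a cutoff $\eta\in C^\infty(\R^3)$ with $\eta=0$ on $B_R$, $\eta=1$ outside $B_{2R}$, and set $\eta_{\e,\kappa}(x):=\eta((x-x_{\e,\kappa})/a_\e)$. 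Since the support of $\zeta$ is only at positive distance from $\partial\O$, and cells close to that boundary may carry obstacles, we restrict attention to the finitely many... rather, to the set $\Kc_\e^\zeta$ of indices with $B_{2Ra_\e}(x_{\e,\kappa})\cap\operatorname{supp}\zeta\neq\emptyset$; on the remaining obstacles $\zeta$ already vanishes. On each ball with $\kappa\in\Kc_\e^\zeta$ define
\[
\psi^\e:=\eta_{\e,\kappa}\zeta-c_{\e,\kappa},\qquad c_{\e,\kappa}:=\Bc_{A_{\e,\kappa}}\bigl(\nabla\eta_{\e,\kappa}\cdot\zeta\bigr),
\]
and set $\psi^\e:=\zeta$ elsewhere. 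Here $\Bc_{A_{\e,\kappa}}$ is the rescaled Bogovski\u{\i} operator of the fixed annulus $A:=B_{2R}\setminus\overline{B_R}$, constructed exactly as in Step~1 of the proof of Lemma~\ref{LemD02}. It maps into $H^1_0(A_{\e,\kappa})^3$ and has $\e$-independent gradient bound, because the $H^1_0$-to-$L^2$ estimate for $\div$ is scale invariant.

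First we verify that $\Bc_{A_{\e,\kappa}}$ may be applied, i.e.\ that $\nabla\eta_{\e,\kappa}\cdot\zeta$ has zero mean on $A_{\e,\kappa}$. Since $\div\zeta=0$,
\[
\int_{A_{\e,\kappa}}\nabla\eta_{\e,\kappa}\cdot\zeta\,\md x=\int_{A_{\e,\kappa}}\div(\eta_{\e,\kappa}\zeta)\,\md x=\int_{\partial B_{2Ra_\e}}\zeta\cdot\bn\,\md S=\int_{B_{2Ra_\e}}\div\zeta\,\md x=0 .
\]
Next we collect the properties of $\psi^\e$. By construction $\div\psi^\e=\nabla\eta_{\e,\kappa}\cdot\zeta+\eta_{\e,\kappa}\div\zeta-\nabla\eta_{\e,\kappa}\cdot\zeta=0$. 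Moreover $\psi^\e=0$ on $B_{Ra_\e}(x_{\e,\kappa})$, which is a neighborhood of $\overline{\Oc_{\e,\kappa}}$, and $\psi^\e=\zeta$ outside the balls. Since $c_{\e,\kappa}$ has zero trace, $\psi^\e\in H^1_0(\O)^3$ with $\div\psi^\e=0$. We define $\Rc^\e\zeta:=\psi^\e|_{\O_p^\e}$. This map is linear and independent of time. It belongs to $\GH^1_{\div}(\O_p^\e)$ because the normal trace vanishes trivially on $\Gamma_s^\e$, and its zero extension equals $\psi^\e$.

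The main step is the quantitative $H^1$ estimate, and this is where $\alpha>3$, i.e.\ $\rho_\e\to0$, enters. On each ball we have $|\nabla\eta_{\e,\kappa}|\le C/a_\e$ and $|B_{2Ra_\e}|\le Ca_\e^3$. The Bogovski\u{\i} bound gives
\[
\|\nabla c_{\e,\kappa}\|_{L^2}\le C\|\nabla\eta_{\e,\kappa}\cdot\zeta\|_{L^2}\le C\|\zeta\|_\infty a_\e^{-1}a_\e^{3/2}=C\|\zeta\|_\infty a_\e^{1/2},
\]
and, from the rescaled Poincar\'e estimate, $\|c_{\e,\kappa}\|_{L^2}\le Ca_\e^{3/2}$. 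For the cutoff term,
\[
\|\nabla((1-\eta_{\e,\kappa})\zeta)\|_{L^2(B_{2Ra_\e})}\le C\bigl(\|\zeta\|_\infty a_\e^{1/2}+\|\nabla\zeta\|_\infty a_\e^{3/2}\bigr).
\]
We square and sum over the $\#\Kc_\e^\zeta\le\#\Kc_\e=O(\e^{-3})$ disjoint balls. This yields
\[
\|\psi^\e-\zeta\|_{H^1(\O)}^2\le C_\zeta\,\e^{-3}a_\e=C_\zeta\,\rho_\e\to0,
\]
which is \eqref{SolenoidalRestrictionConvergence}$_1$. The bound \eqref{SolenoidalRestrictionConvergence}$_2$ then follows from $\|\Rc^\e\zeta\|_{H^1(\O_p^\e)}\le\|\zeta\|_{H^1}+C_\zeta\rho_\e^{1/2}$. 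The only delicate point I anticipate is the boundary issue of cells near $\partial\O$, which is handled by the choice of $\Kc_\e^\zeta$ above, together with checking that the constants of the scaled Bogovski\u{\i} operator are uniform. The latter holds because every annulus is a homothetic copy of the fixed annulus $A$.
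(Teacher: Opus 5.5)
Your proposal is correct and follows the paper's construction essentially step by step: the cutoff $\eta_{\e,\kappa}$ on the annuli $A_{\e,\kappa}$, the rescaled Bogovski\u{\i} correction of the mean-zero function $\nabla\eta_{\e,\kappa}\cdot\zeta$, the gluing into $\psi^\e\in H^1_{0,\div}(\O)^3$, and the error bound $\|\psi^\e-\zeta\|_{H^1}^2\le C_\zeta\,a_\e/\e^3=C_\zeta\rho_\e$. Your one deviation, restricting to the index set $\Kc_\e^\zeta$, is unnecessary. All balls already lie in $\O$, since $\e(\kappa+\overline Y)\subset\O$ for $\kappa\in\Kc_\e$, and on balls where $\zeta\equiv0$ the correction vanishes by linearity of the Bogovski\u{\i} operator; this same observation shows that your map coincides with the $\zeta$-independent construction and is therefore linear.
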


\begin{proof}
	Fix $\zeta\in C^\infty_{c,\div}(\O)^3$ and $R>R_{\Oc}$, so that
	$\overline{\Oc}\subset B_R$. For $\kappa\in\Kc_\e$ recall
	$x_{\e,\kappa}=\e(\kappa+y_0)$ and set
	\[
	A_{\e,\kappa}
	:=
	B_{2Ra_\e}(x_{\e,\kappa})
	\setminus\overline{B_{Ra_\e}(x_{\e,\kappa})} ,
	\qquad
	\mathscr N_\e:=\bigcup_{\kappa\in\Kc_\e}B_{2Ra_\e}(x_{\e,\kappa}) .
	\]
	Since $a_\e/\e=\e^{\alpha-1}\to0$, there is $\e_0\in(0,1)$ such that
	$2Ra_\e<c_0\e$ for every $\e\in(0,\e_0)$, with $c_0$ as in
	\eqref{eq:obstacle-separation}. For such $\e$ the balls
	$B_{2Ra_\e}(x_{\e,\kappa})$, $\kappa\in\Kc_\e$, are mutually disjoint, are
	contained in their corresponding $\e$-cells, and do not meet $\partial\O$.
	Since $\#\Kc_\e\leq C\e^{-3}$,
	\begin{equation}\label{eq:small-modification-set}
		|\mathscr N_\e|\leq C\frac{a_\e^3}{\e^3}
		=C\e^{3(\alpha-1)} .
	\end{equation}
	
	\textbf{Step 1: The cutoff and its divergence.}
	Let $\eta\in C^\infty\bigl(\overline{B_{2R}\setminus B_R}\bigr)$ satisfy
	$0\leq\eta\leq1$, $\eta=0$ on $\partial B_R$ and $\eta=1$ on
	$\partial B_{2R}$, and set
	\[
	\eta_{\e,\kappa}(x):=\eta\Bigl(\frac{x-x_{\e,\kappa}}{a_\e}\Bigr),
	\]
	so that
	\begin{equation}\label{eq:subcritical-cutoff-gradient}
		\eta_{\e,\kappa}=0
		\ \text{ on }\partial B_{Ra_\e}(x_{\e,\kappa}),
		\qquad
		\eta_{\e,\kappa}=1
		\ \text{ on }\partial B_{2Ra_\e}(x_{\e,\kappa}),
		\qquad
		|\nabla\eta_{\e,\kappa}|\leq\frac{C}{a_\e} .
	\end{equation}
	The field $\eta_{\e,\kappa}\zeta$ has the required traces on the two
	boundaries of the annulus but is not divergence-free: since
	$\nabla\cdot\zeta=0$,
	\[
	\nabla\cdot(\eta_{\e,\kappa}\zeta)
	=\nabla\eta_{\e,\kappa}\cdot\zeta
	=:f_{\e,\kappa}
	\qquad\text{in }A_{\e,\kappa}.
	\]
	By the divergence theorem, $\nabla\cdot\zeta=0$ and the boundary values in
	\eqref{eq:subcritical-cutoff-gradient},
	\[
	\int_{A_{\e,\kappa}}f_{\e,\kappa}\,\md x
	=\int_{A_{\e,\kappa}}\nabla\cdot(\eta_{\e,\kappa}\zeta)\,\md x
	=\int_{\partial B_{2Ra_\e}(x_{\e,\kappa})}\zeta\cdot\bn\,\md S
	=\int_{B_{2Ra_\e}(x_{\e,\kappa})}\nabla\cdot\zeta\,\md x
	=0 ,
	\]
	so that $f_{\e,\kappa}\in L^2_0(A_{\e,\kappa})$.
	
	\textbf{Step 2: The local Bogovski\u{\i} correction.}
	Let
	$\mathcal B_{A_{\e,\kappa}}:L^2_0(A_{\e,\kappa})\to H^1_0(A_{\e,\kappa})^3$
	be the Bogovski\u{\i} operator obtained by translation and rescaling from a
	fixed Bogovski\u{\i} operator on $B_{2R}\setminus\overline{B_R}$; see, for
	example, \cite[Lemma~III.3.1]{galdi2011}. Set
	$c_{\e,\kappa}:=\mathcal B_{A_{\e,\kappa}}[f_{\e,\kappa}]$, so that
	$c_{\e,\kappa}\in H^1_0(A_{\e,\kappa})^3$ and
	$\nabla\cdot c_{\e,\kappa}=f_{\e,\kappa}$ in $A_{\e,\kappa}$. The operator
	is linear, and rescaling the estimate on the reference annulus gives, with
	$C$ independent of $\e$ and $\kappa$,
	\begin{equation}\label{eq:local-Bogovskii-estimate}
		\|\nabla c_{\e,\kappa}\|_{L^2(A_{\e,\kappa})}
		\leq C\|f_{\e,\kappa}\|_{L^2(A_{\e,\kappa})}
		\leq\frac{C}{a_\e}\|\zeta\|_{L^2(A_{\e,\kappa})},
	\end{equation}
	where the last inequality uses
	\eqref{eq:subcritical-cutoff-gradient}. The rescaled Poincar\'e inequality
	then gives
	\begin{equation}\label{eq:local-Bogovskii-L2-estimate}
		\|c_{\e,\kappa}\|_{L^2(A_{\e,\kappa})}
		\leq Ca_\e\|\nabla c_{\e,\kappa}\|_{L^2(A_{\e,\kappa})}
		\leq C\|\zeta\|_{L^2(A_{\e,\kappa})} .
	\end{equation}
	
	\textbf{Step 3: The corrected field.}
	Define $\psi^\e$ on $\O$ by (see Figure~\ref{fig:solenoidal-restriction})
	\begin{equation}\label{eq:defpsi}
		\psi^\e(x)
		:=
		\begin{cases}
			0,
			& x\in B_{Ra_\e}(x_{\e,\kappa}),\ \kappa\in\Kc_\e,
			\\[2mm]
			\eta_{\e,\kappa}(x)\zeta(x)-c_{\e,\kappa}(x),
			& x\in A_{\e,\kappa},\ \kappa\in\Kc_\e,
			\\[2mm]
			\zeta(x),
			& x\notin\mathscr N_\e .
		\end{cases}
	\end{equation}
	The definition is unambiguous because the balls are mutually disjoint. On
	the inner boundary of each annulus $\eta_{\e,\kappa}\zeta-c_{\e,\kappa}=0$,
	while on the outer boundary $\eta_{\e,\kappa}\zeta-c_{\e,\kappa}=\zeta$;
	thus the traces of the three pieces agree and $\psi^\e\in H^1_0(\O)^3$. In
	every annulus,
	\[
	\nabla\cdot\psi^\e
	=\nabla\eta_{\e,\kappa}\cdot\zeta
	+\eta_{\e,\kappa}\nabla\cdot\zeta
	-\nabla\cdot c_{\e,\kappa}
	=f_{\e,\kappa}-f_{\e,\kappa}=0 ,
	\]
	and $\psi^\e$ is divergence-free inside the inner balls and outside
	$\mathscr N_\e$ as well. Since the traces agree across the interfaces, no
	distributional surface term appears, and hence
	$\psi^\e\in H^1_{0,\div}(\O)^3$.
	
	Since $\overline{\Oc_{\e,\kappa}}\subset B_{Ra_\e}(x_{\e,\kappa})$ by the
	choice of $R$, the field $\psi^\e$ vanishes on a neighborhood of every
	obstacle, and in particular on a neighborhood of $\Gamma_s^\e$. We set
	\[
	\Rc^\e\zeta:=\psi^\e|_{\O_p^\e}
	\in H^1_{0,\div}(\O_p^\e)^3\subset\GH^1_{\div}(\O_p^\e),
	\]
	whose zero extension satisfies $\overline{\Rc^\e\zeta}=\psi^\e$ in $\O$, so
	that $\overline{\Rc^\e\zeta}\in H^1_{0,\div}(\O)^3$. The map
	$\zeta\mapsto\Rc^\e\zeta$ is linear by the linearity of the cutoff
	multiplication and of $\mathcal B_{A_{\e,\kappa}}$, and the construction
	involves no time variable.
	
	\textbf{Step 4: Convergence.}
	Since $\psi^\e=\zeta$ outside $\mathscr N_\e$, the difference
	$\psi^\e-\zeta$ is supported in $\mathscr N_\e$, where it equals
	$(\eta_{\e,\kappa}-1)\zeta-c_{\e,\kappa}$ on $A_{\e,\kappa}$ and $-\zeta$
	on the inner ball. The definition \eqref{eq:defpsi}, together with
	\eqref{eq:subcritical-cutoff-gradient}--\eqref{eq:local-Bogovskii-L2-estimate},
	$a_\e\leq1$ and the disjointness of the balls, gives
	\begin{equation}\label{eq:local-restriction-error}
		\|\psi^\e-\zeta\|_{H^1(\O)^3}
		\leq
		C\Bigl(
		\|\nabla\zeta\|_{L^2(\mathscr N_\e)}
		+\frac{1}{a_\e}\|\zeta\|_{L^2(\mathscr N_\e)}
		\Bigr).
	\end{equation}
	Since $\zeta$ and $\nabla\zeta$ are bounded, \eqref{eq:small-modification-set}
	gives
	\[
	\|\nabla\zeta\|_{L^2(\mathscr N_\e)}
	\leq\|\nabla\zeta\|_{L^\infty(\O)}|\mathscr N_\e|^{1/2}
	\leq C_\zeta\,\e^{3(\alpha-1)/2}\longrightarrow0
	\]
	and
	\[
	\frac{1}{a_\e}\|\zeta\|_{L^2(\mathscr N_\e)}
	\leq
	\frac{\|\zeta\|_{L^\infty(\O)}}{a_\e}|\mathscr N_\e|^{1/2}
	\leq
	C_\zeta\Bigl(\frac{a_\e}{\e^3}\Bigr)^{1/2}
	=C_\zeta\,\e^{(\alpha-3)/2}\longrightarrow0 ,
	\]
	the last convergence precisely because $\alpha>3$. Hence
	\begin{equation}\label{eq:subcritical-strong-restriction}
		\|\overline{\Rc^\e\zeta}-\zeta\|_{H^1(\O)^3}
		=\|\psi^\e-\zeta\|_{H^1(\O)^3}\longrightarrow0 ,
	\end{equation}
	which is \eqref{SolenoidalRestrictionConvergence}$_1$. The uniform bound
	follows, since
	\[
	\|\Rc^\e\zeta\|_{H^1(\O_p^\e)}
	\leq\|\psi^\e\|_{H^1(\O)^3}
	\leq\|\zeta\|_{H^1(\O)^3}+\|\psi^\e-\zeta\|_{H^1(\O)^3}
	\leq C_\zeta .
	\]
	This completes the proof.
\end{proof}

\begin{remark}[Role of $\alpha$ in the construction]
	\label{rem:alpha-role}
	The hypothesis $\alpha>3$ is used only in the last step of the proof, and
	only for the gradient. The construction and the $L^2$ convergence require
	only $\alpha>1$; the $H^1$ bound and the $H^1$ convergence require
	$\alpha\geq3$ and $\alpha>3$, respectively.
	
	\textbf{(i) Geometry and well-definedness: $\alpha>1$.} The disjointness of
	the balls $B_{2Ra_\e}(x_{\e,\kappa})$, their inclusion in the corresponding
	$\e$-cells and the fact that they do not meet $\partial\O$ follow from
	$2Ra_\e<c_0\e$, that is, from $a_\e/\e=\e^{\alpha-1}\to0$; this is the same
	type of condition as $\delta_\e\to0$ in the construction of the domain in
	Section~\ref{sec:domain}. Steps~1--3 use nothing further: the cutoff, the
	local Bogovski\u{\i} correction, the gluing and the conclusion
	$\psi^\e\in H^1_{0,\div}(\O)^3$ are cellwise statements, invariant under
	the rescaling $x\mapsto(x-x_{\e,\kappa})/a_\e$.
	
	\textbf{(ii) Strong $L^2$ convergence: $\alpha>1$.} On the inner ball
	$\psi^\e-\zeta=-\zeta$, and on the annulus
	$\psi^\e-\zeta=(\eta_{\e,\kappa}-1)\zeta-c_{\e,\kappa}$. The first two
	contributions are bounded pointwise by $\|\zeta\|_{L^\infty(\O)}$, and the
	third is bounded by $C\|\zeta\|_{L^2(A_{\e,\kappa})}$ by
	\eqref{eq:local-Bogovskii-L2-estimate}, the factor $a_\e$ of the rescaled
	Poincar\'e inequality cancelling the factor $a_\e^{-1}$ of
	\eqref{eq:subcritical-cutoff-gradient}. Hence, by
	\eqref{eq:small-modification-set},
	\begin{equation}\label{eq:restriction-L2-rate}
		\|\overline{\Rc^\e\zeta}-\zeta\|_{L^2(\O)^3}
		\leq C_\zeta|\mathscr N_\e|^{1/2}
		\leq C_\zeta\Bigl(\frac{a_\e}{\e}\Bigr)^{3/2}
		=C_\zeta\,\e^{3(\alpha-1)/2},
	\end{equation}
	with no condition beyond $\alpha>1$. This rate is better than the rate
	$a_\e/\e$ of the primal--zero extension gap in Lemma~\ref{LemD02}.
	
	\textbf{(iii) The gradient: $\alpha\geq3$ for boundedness, $\alpha>3$ for
		convergence.} Both gradient contributions on $\mathscr N_\e$, namely
	$\zeta\otimes\nabla\eta_{\e,\kappa}$ and $\nabla c_{\e,\kappa}$, are of
	size $a_\e^{-1}\|\zeta\|_{L^2(\mathscr N_\e)}$ by
	\eqref{eq:subcritical-cutoff-gradient} and
	\eqref{eq:local-Bogovskii-estimate}, whence
	\[
	\frac{1}{a_\e}\|\zeta\|_{L^2(\mathscr N_\e)}
	\leq C_\zeta\Bigl(\frac{a_\e}{\e^3}\Bigr)^{1/2}
	=C_\zeta\,\rho_\e^{1/2},
	\]
	with $\rho_\e$ the obstacle-scale parameter of
	\eqref{eq:subcritical-parameter}. Thus the family $\{\Rc^\e\zeta\}_\e$ is
	bounded in $H^1$ as soon as $\rho_\e\lesssim1$, that is, for $\alpha\geq3$,
	while \eqref{eq:subcritical-strong-restriction} requires $\rho_\e\to0$, that
	is, $\alpha>3$.
	
	Accordingly, the parameters $a_\e/\e$ of \eqref{eq:obstacle-volume} and
	$\rho_\e$ of \eqref{eq:subcritical-parameter} play different roles:
	$a_\e/\e$ governs every $L^2$-level statement of this section, whereas
	$\rho_\e$ governs the gradient alone.
\end{remark}


		\subsection{A priori estimates}\label{SSec51}
				This subsection contains the proof of Lemma~\ref{Est01}.
		\begin{proof}[Proof of Lemma \ref{Est01}]
			The proof is divided into six steps. Throughout, $C>0$ denotes a generic constant
			independent of $\e$, whose value may change from line to line. We shall use
			repeatedly that
			\begin{equation}\label{VolBound01}
				|\O^\e_s|\leq Ca_\e^3/\e^3,
				\qquad\text{hence}\qquad
				\tfrac12|\O|\le|\O^\e_p|\le|\O|
				\quad\text{for all $\e$ small enough,}
			\end{equation}
			so that in particular $|\O^\e_p|^{-1}$ is bounded uniformly in $\e$.
			
			\medskip
			{\bf Step 1. The energy inequality.}
			By Definition \ref{Def01}, the weak solution satisfies the energy inequality
			\eqref{weak-energy1}. 
			Moving the source term to the left-hand side, this reads, for a.e.\ $t\in S$,
			\begin{multline}\label{4.5}
				\frac12\int_{\O^\e_p}|\Gu^\e(t)|^2\,\md x
				+2\nu\int_0^t\!\!\int_{\O^\e_p}|D(\Gu^\e)|^2\,\md(x,s)
				+\lambda\int_0^t\!\!\int_{\O^\e_p}|\nabla\mu^\e|^2\,\md(x,s)\\
				+\frac{\lambda}{2}\int_{\O^\e_p}|\nabla\phi^\e(t)|^2\,\md x
				+\lambda\int_{\O^\e_p}F(\phi^\e(t))\,\md x
				+\lambda\int_0^t\!\!\int_{\O^\e_p}G(\phi^\e)\mu^\e\,\md(x,s)\\
				\le\ \Ec^\e(0)+\int_0^t\!\!\int_{\O^\e_p}\t\cdot\Gu^\e\,\md(x,s).
			\end{multline}
			By the assumptions on the initial data \eqref{ICA01}, the growth bound \eqref{F01}
			on $F$ and \eqref{VolBound01},
			\begin{equation}\label{InitEnergy01}
				\Ec^\e(0)
				=\frac12\|\Gu^\e_0\|^2_{L^2(\O^\e_p)}
				+\lambda\Bigl(\frac12\|\nabla\phi_0\|^2_{L^2(\O^\e_p)}
				+\int_{\O^\e_p}F(\phi_0)\,\md x\Bigr)
				\le C.
			\end{equation}
For the force term, H\"older's and Young's inequalities give, for any
$\kappa_5>0$ and a.e.\ $t\in S$,
\begin{equation}\label{ForceEst01}
	\left|\int_0^t\!\!\int_{\O^\e_p}\t\cdot\Gu^\e\,\md(x,s)\right|
	\le\frac{1}{2\kappa_5}\|\t\|^2_{L^2(\Qc^\e_p)}
	+\frac{\kappa_5}{2}\int_0^t\|\Gu^\e\|^2_{L^2(\O^\e_p)}\,\md s .
\end{equation}
Since $\Gu^\e(s)\in\GH^1_\div(\O^\e_p)$ for a.e.\ $s\in S$, the uniform Korn
inequality \eqref{eq:Poincare-Korn}$_2$ yields
\begin{equation}\label{Korn01}
	\|\Gu^\e\|_{L^2(\O^\e_p)}+\|\nabla\Gu^\e\|_{L^2(\O^\e_p)}
	\le C_K\|D(\Gu^\e)\|_{L^2(\O^\e_p)}
	\qquad\text{for a.e. }s\in S,
\end{equation}
with $C_K$ independent of $\e$. Choosing $\kappa_5>0$ with
$C_K^2\kappa_5/2\le\nu$, the last term in \eqref{ForceEst01} is absorbed into
the viscous term of \eqref{4.5}. Together with \eqref{InitEnergy01} and the
bound on $\t$ we obtain, for a.e.\ $t\in S$,
\begin{multline}\label{4.7}
	\frac12\|\Gu^\e(t)\|^2_{L^2(\O^\e_p)}
	+\nu\int_0^t\|D(\Gu^\e)\|^2_{L^2(\O^\e_p)}\,\md s
	+\lambda\int_0^t\|\nabla\mu^\e\|^2_{L^2(\O^\e_p)}\,\md s
	+\frac{\lambda}{2}\|\nabla\phi^\e(t)\|^2_{L^2(\O^\e_p)}\\
	+\lambda\int_{\O^\e_p}F(\phi^\e(t))\,\md x
	+\lambda\int_0^t\!\!\int_{\O^\e_p}G(\phi^\e)\mu^\e\,\md(x,s)
	\ \le\ C .
\end{multline}
			Note that the last term on the left-hand side has no sign a priori; it is estimated
			from below in Step 2.
			
			{\bf Step 2. The source contribution.}
			Since $G$ is Lipschitz with $G(0)=0$ by \eqref{AssG01}, and since
			$\phi^\e\in L^\infty(S;H^1(\O^\e_p))$, the composition $G(\phi^\e)$ belongs to
			$L^2(S;H^1(\O^\e_p))$, with $\nabla G(\phi^\e)=G'(\phi^\e)\nabla\phi^\e$. Hence
			$\varphi_3=G(\phi^\e)$ is an admissible test function in \eqref{weak-mu1}, and we
			obtain, for a.e.\ $t\in(0,T)$,
			\begin{equation}\label{E01}
				\int_{\O^\e_p}G(\phi^\e)\mu^\e\,\md x
				=\underbrace{\int_{\O^\e_p}G'(\phi^\e)|\nabla\phi^\e|^2\,\md x}_{=:I_1}
				+\underbrace{\int_{\O^\e_p}G(\phi^\e)F'(\phi^\e)\,\md x}_{=:I_2}.
			\end{equation}
			By assumption \eqref{AssG01}, namely $0<c_1\le G'(s)\le c_2$ for all $s\in\R$,
			\begin{equation}\label{E02}
				c_1\|\nabla\phi^\e\|^2_{L^2(\O^\e_p)}\le I_1\le c_2\|\nabla\phi^\e\|^2_{L^2(\O^\e_p)}.
			\end{equation}
			We turn to $I_2$. Since $F'(s)=s^3-s$, we have,
			\[
			G(s)F'(s)=G(s)s\,(s^2-1).
			\]
			If $|s|\ge1$, then $s^2-1\ge0$ and the lower bound in \eqref{SourceGrowth} gives
			$G(s)F'(s)\ge c_1s^2(s^2-1)$. If $|s|<1$, then $s^2-1<0$ and the upper bound in
			\eqref{SourceGrowth} gives $G(s)F'(s)\ge c_2s^2(s^2-1)\ge-\frac{c_2}{4}$, whereas
			$c_1s^2(s^2-1)\le0$; hence in this case as well
			$G(s)F'(s)\ge c_1s^2(s^2-1)-\frac{c_2}{4}$. Using the identity
			$s^2(s^2-1)=\bigl(s^2-\frac12\bigr)^2-\frac14$, we conclude that
			\begin{equation}\label{E023}
				G(s)F'(s)\ \ge\ c_1\Bigl(s^2-\frac12\Bigr)^2-\frac{c_1+c_2}{4},
				\qquad\forall s\in\R,
			\end{equation}
			and therefore, by \eqref{VolBound01},
			\begin{equation}\label{E03}
				I_2\ \ge\ c_1\int_{\O^\e_p}\Bigl((\phi^\e)^2-\frac12\Bigr)^2\md x
				-\frac{c_1+c_2}{4}\,|\O|.
			\end{equation}
			Combining \eqref{E01}--\eqref{E03} and inserting the result into \eqref{4.7}, we
			obtain
			\begin{multline}\label{E05}
				\frac12\|\Gu^\e\|^2_{L^\infty(S;L^2(\O^\e_p))}
				+\nu\|D(\Gu^\e)\|^2_{L^2(\Qc^\e_p)}
				+\lambda\|\nabla\mu^\e\|^2_{L^2(\Qc^\e_p)}
				+\frac{\lambda}{2}\|\nabla\phi^\e\|^2_{L^\infty(S;L^2(\O^\e_p))}\\
				+\lambda\|F(\phi^\e)\|_{L^\infty(S;L^1(\O^\e_p))}
				+\lambda c_1\|\nabla\phi^\e\|^2_{L^2(\Qc^\e_p)}
				+\lambda c_1\int_{\Qc^\e_p}\Bigl((\phi^\e)^2-\frac12\Bigr)^2\md(x,t)
				\ \le\ C.
			\end{multline}
			All terms on the left-hand side are now nonnegative; taking the supremum over
			$t\in S$ gives
			\begin{equation}\label{U131}
				\begin{aligned}
					\|\Gu^\e\|_{L^\infty(S;L^2(\O^\e_p))}+\|D(\Gu^\e)\|_{L^2(\Qc^\e_p)}+
					\|\nabla\mu^\e\|_{L^2(\Qc^\e_p)}
					+\|\nabla\phi^\e\|_{L^\infty(S;L^2(\O^\e_p))}&\le C,\\
					\|F(\phi^\e)\|_{L^\infty(S;L^1(\O^\e_p))}
					+\int_{\Qc^\e_p}\Bigl((\phi^\e)^2-\frac12\Bigr)^2\md(x,t)&\le C.
				\end{aligned}
			\end{equation}
			Applying the uniform Korn inequality \eqref{Korn01} once more, the first line of
			\eqref{U131} upgrades to \eqref{MainE01}$_{1,2}$.
			Finally, the lower bound $F(s)\ge\frac12s^2-\frac34$ from \eqref{F01} together with
			the fifth term in \eqref{E05} and \eqref{VolBound01} gives
			$\|\phi^\e\|_{L^\infty(S;L^2(\O^\e_p))}\le C$, whence, with
			\eqref{U131}$_4$, we get \eqref{MainE01}$_4$.

{\bf Step 3. The transport term.}
Since $\nabla\cdot\Gu^\e=0$ in $\O^\e_p$, $\Gu^\e=0$ on $\partial\O$ and
$\Gu^\e\cdot\Gn^\e=0$ on $\Gamma^\e_s$, integration by parts gives, for every
$\varphi\in H^1(\O^\e_p)$ and a.e.\ $t\in S$,
\begin{equation}\label{TransportIBP}
	\int_{\O^\e_p}(\Gu^\e\cdot\nabla\phi^\e)\,\varphi\,\md x
	=-\int_{\O^\e_p}\phi^\e\,\Gu^\e\cdot\nabla\varphi\,\md x ,
\end{equation}
the boundary integral vanishing on $\partial\O$ because $\Gu^\e=0$ there, and on
$\Gamma^\e_s$ because of the impermeability condition.
			
			{\bf Step 4. Estimate for $\partial_t\Gu^\e$ in
	$L^{4/3}(S;\GH_\div^{-1}(\O^\e_p))$.}
Let $\psi\in\GH^1_\div(\O^\e_p)$. By the weak momentum formulation
\eqref{weak-momentum1} and the antisymmetric form
\eqref{eq:trilinear-form} of the convective term,
\begin{equation}\label{TimeEst01}
	\bigl\langle\partial_t\Gu^\e,\psi\bigr\rangle
	=\Rey_\e\int_{\O^\e_p}\Gu^\e\otimes\Gu^\e:\nabla\psi\,\md x
	-2\nu\int_{\O^\e_p}D(\Gu^\e):D(\psi)\,\md x
	-\lambda\int_{\O^\e_p}\phi^\e\nabla\mu^\e\cdot\psi\,\md x
	+\int_{\O^\e_p}\th^\e\cdot\psi\,\md x .
\end{equation}
For the convective term, H\"older's inequality and $\Rey_\e\le1$ from
\eqref{ReynoldsAssumption} give
\begin{equation}\label{eq:convective-bound}
	\left|\Rey_\e\int_{\O^\e_p}\Gu^\e\otimes\Gu^\e:\nabla\psi\,\md x\right|
	\le\|\Gu^\e\|^2_{L^4(\O^\e_p)}\|\nabla\psi\|_{L^2(\O^\e_p)}
	\le C\|\Gu^\e\|_{L^2(\O^\e_p)}^{1/2}
	\|\Gu^\e\|_{H^1(\O^\e_p)}^{3/2}\|\psi\|_{H^1(\O^\e_p)},
\end{equation}
the last step by the interpolation inequality
$\|v\|_{L^4}\le\|v\|_{L^2}^{1/4}\|v\|_{L^6}^{3/4}$, valid on any domain,
and the uniform Sobolev embedding
$H^1(\O^\e_p)\hookrightarrow L^6(\O^\e_p)$ of \eqref{eq:PW-Sobolev}$_2$.
The viscous term is estimated by Cauchy--Schwarz,
\[
\left|2\nu\int_{\O^\e_p}D(\Gu^\e):D(\psi)\,\md x\right|
\le 2\nu\|D(\Gu^\e)\|_{L^2(\O^\e_p)}\|D(\psi)\|_{L^2(\O^\e_p)}
\le C\|D(\Gu^\e)\|_{L^2(\O^\e_p)}\|\psi\|_{H^1(\O^\e_p)}.
\]
For the capillary term, H\"older's inequality together with the uniform Sobolev
embedding $H^1(\O^\e_p)\hookrightarrow L^4(\O^\e_p)$ from \eqref{eq:PW-Sobolev}$_2$ gives
\[
\left|\lambda\int_{\O^\e_p}\phi^\e\nabla\mu^\e\cdot\psi\,\md x\right|
\le\lambda\|\phi^\e\|_{L^4(\O^\e_p)}\|\nabla\mu^\e\|_{L^2(\O^\e_p)}
\|\psi\|_{L^4(\O^\e_p)}
\le C\|\phi^\e\|_{H^1(\O^\e_p)}\|\nabla\mu^\e\|_{L^2(\O^\e_p)}\|\psi\|_{H^1(\O^\e_p)},
\]
while $\bigl|\int_{\O^\e_p}\th^\e\cdot\psi\,\md x\bigr|
\le\|\th^\e\|_{L^2(\O^\e_p)}\|\psi\|_{H^1(\O^\e_p)}$. Hence
\begin{equation}\label{TimeEst05}
	\|\partial_t\Gu^\e\|_{\GH^{-1}_\div(\O^\e_p)}
	\le C\Bigl(
	\|\Gu^\e\|_{L^2(\O^\e_p)}^{1/2}\|\Gu^\e\|_{H^1(\O^\e_p)}^{3/2}
	+\|D(\Gu^\e)\|_{L^2(\O^\e_p)}
	+\|\phi^\e\|_{H^1(\O^\e_p)}\|\nabla\mu^\e\|_{L^2(\O^\e_p)}
	+\|\th^\e\|_{L^2(\O^\e_p)}\Bigr).
\end{equation}
It is the first term that fixes the integrability in time. Raising
\eqref{TimeEst05} to the power $4/3$ and integrating over $S$, the
convective contribution is
\[
\int_S\|\Gu^\e\|_{L^2(\O^\e_p)}^{2/3}
\|\Gu^\e\|_{H^1(\O^\e_p)}^{2}\,\md t
\le
\|\Gu^\e\|^{2/3}_{L^\infty(S;L^2(\O^\e_p))}
\|\Gu^\e\|^{2}_{L^2(S;H^1(\O^\e_p))}
\le C
\]
by \eqref{MainE01}$_{1,2}$, and this is the largest exponent for which
the right-hand side is finite: the product
$\|\Gu^\e\|_{L^2}^{1/2}\|\Gu^\e\|_{H^1}^{3/2}$ belongs to $L^{4/3}(S)$
and to no better Lebesgue space in time, the velocity being bounded only
in $L^\infty(S;L^2)\cap L^2(S;H^1)$. The three remaining terms of
\eqref{TimeEst05} belong to $L^2(S)\subset L^{4/3}(S)$, by
\eqref{U131}, \eqref{MainE01}$_2$, the bound
$\phi^\e\in L^\infty(S;H^1(\O^\e_p))$ and the hypothesis on $\th^\e$.
We conclude \eqref{MainE01}$_3$.
			
			{\bf Step 5. Estimate for $\partial_t\phi^\e$ in $L^2(S;H^{-1}(\O^\e_p))$.}
			Let $\varphi_2\in H^1(\O^\e_p)$. From \eqref{weak-phi1}, and after integrating the
			transport term by parts as in Step 3,
			\begin{equation}\label{PhiTime01}
				\langle\partial_t\phi^\e,\varphi_2\rangle
				=-\int_{\O^\e_p}G(\phi^\e)\varphi_2\,\md x
				-\int_{\O^\e_p}\nabla\mu^\e\cdot\nabla\varphi_2\,\md x
				+\int_{\O^\e_p}\phi^\e\,\Gu^\e\cdot\nabla\varphi_2\,\md x .
			\end{equation}
			By \eqref{SourceGrowth}, $|G(s)|\le c_2|s|$, so the first term is bounded by
			$C\|\phi^\e\|_{L^2(\O^\e_p)}\|\varphi_2\|_{H^1(\O^\e_p)}$, and the second by
			$\|\nabla\mu^\e\|_{L^2(\O^\e_p)}\|\varphi_2\|_{H^1(\O^\e_p)}$. For the transport term,
			H\"older's inequality and \eqref{eq:PW-Sobolev}$_2$ give
			\[
			\left|\int_{\O^\e_p}\phi^\e\,\Gu^\e\cdot\nabla\varphi_2\,\md x\right|
			\le\|\phi^\e\|_{L^4(\O^\e_p)}\|\Gu^\e\|_{L^4(\O^\e_p)}
			\|\nabla\varphi_2\|_{L^2(\O^\e_p)}
			\le C\|\phi^\e\|_{H^1(\O^\e_p)}\|\Gu^\e\|_{H^1(\O^\e_p)}\|\varphi_2\|_{H^1(\O^\e_p)}.
			\]
			Hence
			\begin{equation}\label{PhiTime03}
				\|\partial_t\phi^\e\|_{(H^{-1}(\O^\e_p))}
				\le C\Bigl(\|\phi^\e\|_{L^2(\O^\e_p)}+\|\nabla\mu^\e\|_{L^2(\O^\e_p)}
				+\|\phi^\e\|_{H^1(\O^\e_p)}\|\Gu^\e\|_{H^1(\O^\e_p)}\Bigr).
			\end{equation}
			Squaring, integrating over $S$, and using $\phi^\e\in L^\infty(S;H^1(\O^\e_p))$ together
			with $\Gu^\e\in L^2(S;H^1(\O^\e_p))$ from \eqref{MainE01}$_2$ and $\nabla\mu^\e\in
			L^2(\Qc^\e_p)$ from \eqref{U131}, we obtain \eqref{MainE01}$_5$.
			
			\smallskip
			{\bf Step 6. Estimate for $\mu^\e$ in $L^2(S;H^1(\O^\e_p))$.}
			For a.e.\ $t\in(0,T)$ we decompose
			$\mu^\e(t)=m^\e(\mu^\e)(t)+\bigl(\mu^\e(t)-m^\e(\mu^\e)(t)\bigr)$, where
			\[
			m^\e(\mu^\e)(t):=\frac{1}{|\O^\e_p|}\int_{\O^\e_p}\mu^\e(t,x)\,\md x .
			\]
			The second summand has zero mean over $\O^\e_p$, so the uniform
			Poincar\'e--Wirtinger inequality \eqref{eq:PW-Sobolev}$_1$ yields
			\begin{equation}\label{pw-mu}
				\|\mu^\e-m^\e(\mu^\e)\|_{L^2(S;H^1(\O^\e_p))}
				\le C\|\nabla\mu^\e\|_{L^2(\Qc^\e_p)} .
			\end{equation}
			Taking $\varphi_3=1$ in \eqref{weak-mu1} gives
			\[
			m^\e(\mu^\e)(t)=\frac{1}{|\O^\e_p|}\int_{\O^\e_p}F'(\phi^\e(t,x))\,\md x .
			\]
			By the growth condition $|F'(s)|\le C(1+|s|^3)$ from \eqref{F01}, the uniform
			embedding $H^1(\O^\e_p)\hookrightarrow L^3(\O^\e_p)$ of \eqref{eq:PW-Sobolev}$_2$ and
			\eqref{VolBound01},
			\[
			|m^\e(\mu^\e)(t)|\le C\bigl(1+\|\phi^\e(t)\|^3_{H^1(\O^\e_p)}\bigr),
			\]
			whence, by \eqref{MainE01}$_4$,
			\begin{equation}\label{mu-bar-bound}
				\|m^\e(\mu^\e)\|_{L^\infty(0,T)}\le C .
			\end{equation}
			Combining \eqref{pw-mu}, \eqref{mu-bar-bound} and \eqref{U131} we conclude \eqref{MainE01}$_6$. This completes the proof.
		\end{proof}

\section{Compactness in the perforated domain}\label{SSec61}

This section contains the proof of Lemma~\ref{lem:limit-compactness}. We use
the primal and zero extensions \eqref{eq:extensions-notation} and recall from
\eqref{eq:obstacle-volume} that
\begin{equation}\label{ObstacleVolume}
	|\O_s^\e|^{1/2}\leq C\e^{3(\alpha-1)/2}\longrightarrow0 .
\end{equation}

\subsection{Contribution of the obstacle set}

\begin{lemma}\label{lem:hole-error}
	Let $\{g^\e\}_\e\subset L^2(\O_T)$ satisfy
	$\|g^\e\|_{L^2(\O_T)}\leq C$ uniformly in $\e$. Then the following hold.
	\begin{enumerate}[label=\textnormal{(\roman*)}]
		\item For every $\psi\in L^\infty(\O_T)$,
		\begin{equation}\label{HoleErrorEstimate}
			\left|
			\int_{S\times\O_s^\e} g^\e\psi\,\md(x,t)
			\right|
			\leq
			C\|\psi\|_{L^\infty(\O_T)}\,|\O_s^\e|^{1/2}
			\leq
			C\|\psi\|_{L^\infty(\O_T)}\,\e^{3(\alpha-1)/2}
			\longrightarrow0 .
		\end{equation}
		Consequently, if $g^\e\rightharpoonup g$ weakly in $L^2(\O_T)$, then
		\[
		\int_{S\times\O_p^\e} g^\e\psi\,\md(x,t)
		\longrightarrow
		\int_{\O_T} g\psi\,\md(x,t)
		\qquad\text{for every }\psi\in L^\infty(\O_T).
		\]
		\item If $g^\e\rightharpoonup g$ weakly in $L^2(\O_T)$, then
		\begin{equation}\label{HoleErrorWeak}
			\chi^\e_p\,g^\e\rightharpoonup g
			\qquad\text{weakly in }L^2(\O_T).
		\end{equation}
		\item If $g^\e\to g$ strongly in $L^2(\O_T)$, then
		$\chi^\e_p g^\e\to g$ strongly in $L^2(\O_T)$.
	\end{enumerate}
	The same statements hold componentwise for vector- and matrix-valued
	sequences.
\end{lemma}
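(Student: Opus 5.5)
The proof is elementary: Cauchy--Schwarz on the small set $S\times\O_s^\e$, combined with the volume bound \eqref{eq:obstacle-volume}.

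For (i), fix $\psi\in L^\infty(\O_T)$ and estimate
\[
\Bigl|\int_{S\times\O_s^\e}g^\e\psi\,\md(x,t)\Bigr|
\le\|g^\e\|_{L^2(\O_T)}\,\|\psi\|_{L^\infty(\O_T)}\,|S\times\O_s^\e|^{1/2}
\le C\,T^{1/2}\|\psi\|_{L^\infty(\O_T)}\,|\O_s^\e|^{1/2}.
\]
Here we use the uniform $L^2$ bound on $g^\e$. Inserting $|\O_s^\e|^{1/2}\le C\e^{3(\alpha-1)/2}$ from \eqref{ObstacleVolume} gives \eqref{HoleErrorEstimate}. For the consequence, split
\[
\int_{S\times\O_p^\e}g^\e\psi=\int_{\O_T}g^\e\psi-\int_{S\times\O_s^\e}g^\e\psi .
\]
The first term converges to $\int_{\O_T}g\psi$ by weak convergence, since $\psi\in L^\infty(\O_T)\subset L^2(\O_T)$ because $\O_T$ is bounded. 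The second term vanishes in the limit by the estimate just proved.

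For (ii), the first step is the case $\psi\in L^\infty(\O_T)$. Here $\int\chi_p^\e g^\e\psi=\int_{S\times\O_p^\e}g^\e\psi\to\int g\psi$ by (i). Next, $\|\chi_p^\e g^\e\|_{L^2}\le\|g^\e\|_{L^2}\le C$, so the family is bounded. Finally, $L^\infty(\O_T)$ is dense in $L^2(\O_T)$, and a bounded family converging against a dense set of test functions converges weakly; a standard $\delta/3$ argument gives this. This proves \eqref{HoleErrorWeak}.

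For (iii), write
\[
\chi_p^\e g^\e-g=\chi_p^\e(g^\e-g)-\chi_s^\e g .
\]
The first term is bounded in norm by $\|g^\e-g\|_{L^2}\to0$. The second satisfies $\|\chi_s^\e g\|_{L^2(\O_T)}^2=\int_{S\times\O_s^\e}|g|^2\to0$ by dominated convergence, since $|S\times\O_s^\e|\to0$ and $|g|^2\in L^1$; absolute continuity of the integral gives the same conclusion.

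The vector- and matrix-valued statements follow by applying the scalar ones to each component. There is no real obstacle in any of these steps. The only point needing care is that $g$ in (iii) is merely $L^2$, so one uses absolute continuity of the integral rather than a rate.
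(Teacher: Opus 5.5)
Your proof is correct. Parts (i) and (iii) match the paper's argument: H\"older's inequality on $S\times\O_s^\e$ with $|S\times\O_s^\e|^{1/2}=T^{1/2}|\O_s^\e|^{1/2}$, and the splitting $\chi_p^\e g^\e-g=\chi_p^\e(g^\e-g)-\chi_s^\e g$.

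In (ii) you take a different route. You first test against $\psi\in L^\infty(\O_T)$, where (i) applies. You then pass to all $\psi\in L^2(\O_T)$ using the uniform bound $\|\chi_p^\e g^\e\|_{L^2(\O_T)}\le C$ and the density of $L^\infty(\O_T)$ in $L^2(\O_T)$. The paper instead tests directly with an arbitrary $\psi\in L^2(\O_T)$ and puts the smallness on the fixed test function:
\[
\Bigl|\int_{S\times\O_s^\e}g^\e\psi\,\md(x,t)\Bigr|
\le\|g^\e\|_{L^2(\O_T)}\,\|\chi_s^\e\psi\|_{L^2(\O_T)}\longrightarrow0 .
\]
This uses the same absolute-continuity mechanism you already use for $\chi_s^\e g$ in (iii). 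The paper's version makes (ii) independent of (i) and needs no density step. Yours derives (ii) from (i), at the cost of the standard bounded-plus-dense-set argument. Both are complete.
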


\begin{proof}
	(i) By Hölder's inequality,
	\[
	\int_{S\times\O_s^\e}|g^\e\psi|\,\md(x,t)
	\leq
	\|\psi\|_{L^\infty(\O_T)}\|g^\e\|_{L^2(\O_T)}
	\,|S\times\O_s^\e|^{1/2},
	\]
	and $|S\times\O_s^\e|^{1/2}=T^{1/2}|\O_s^\e|^{1/2}$, which gives
	\eqref{HoleErrorEstimate} by \eqref{ObstacleVolume}. The second assertion
	follows by writing
	$\int_{S\times\O_p^\e}=\int_{\O_T}-\int_{S\times\O_s^\e}$, the first
	integral converging by weak--strong pairing and the second by
	\eqref{HoleErrorEstimate}.
	
	(ii) Let $\psi\in L^2(\O_T)$ be fixed. Then
	\[
	\left|
	\int_{S\times\O_s^\e}g^\e\psi\,\md(x,t)
	\right|
	\leq
	\|g^\e\|_{L^2(\O_T)}
	\|\psi\|_{L^2(S\times\O_s^\e)}
	\longrightarrow0,
	\]
	the last convergence by \eqref{eq:char-convergence}, since
$\|\psi\|_{L^2(S\times\O_s^\e)}=\|\chi_s^\e\psi\|_{L^2(\O_T)}\to0$
by dominated convergence, $\psi$ being independent of $\e$.
	
	(iii) By \eqref{eq:char-convergence},
\[
\|\chi_p^\e g^\e-g\|_{L^2(\O_T)}
\leq\|\chi_p^\e(g^\e-g)\|_{L^2(\O_T)}+\|\chi_s^\e g\|_{L^2(\O_T)}
\leq\|g^\e-g\|_{L^2(\O_T)}+\|\chi_s^\e g\|_{L^2(\O_T)},
\]
and both terms tend to zero, the first by hypothesis and the second by
dominated convergence.
\end{proof}

\subsection{Compactness properties}

\begin{lemma}
	\label{lem:unfolding-compactness}
	The following assertions\footnote{Throughout this subsection, a subsequence
		of $\{\e\}_\e$, not relabeled, is extracted whenever necessary.} hold.
	
	\begin{enumerate}[label=\textnormal{(\roman*)}]
		\item Let $v^\e\in L^2(S;H^1(\O_p^\e))$ satisfy
		$\|v^\e\|_{L^2(S;H^1(\O_p^\e))}\leq C$. Then, up to a subsequence, there
		exists $v\in L^2(S;H^1(\O))$ such that
		\begin{equation}\label{PeriodicScalarCompactness}
			\wt v^\e\rightharpoonup v
			\quad\text{weakly in }L^2(S;H^1(\O)),
			\qquad
			\nabla\wt v^\e\rightharpoonup\nabla v
			\quad\text{weakly in }L^2(\O_T)^3 .
		\end{equation}
		If, in addition,
		\[
		\|v^\e\|_{L^\infty(S;H^1(\O_p^\e))}
		+\|\partial_tv^\e\|_{L^2(S;H^{-1}(\O_p^\e))}\leq C,
		\]
		then
		\begin{equation}\label{PeriodicScalarStrongCompactness}
			\wt v^\e\to v,
			\quad
			\overline{v^\e}\to v
			\qquad\text{strongly in }C([0,T];L^s(\O))
			\quad\text{for every }2\leq s<6 .
		\end{equation}
		
		\item Let $\bv^\e\in L^2(S;\GH^1_{\div}(\O_p^\e))$ satisfy
		\begin{equation}\label{VelocityCompactnessAssumptions}
			\|\bv^\e\|_{L^\infty(S;L^2(\O_p^\e)^3)}
			+\|\bv^\e\|_{L^2(S;H^1(\O_p^\e)^3)}
			+\|\partial_t\bv^\e\|_{L^{4/3}(S;\GH^{-1}_{\div}(\O_p^\e))}
			\leq C .
		\end{equation}
		Then, up to a subsequence, there exists
		$\bv\in L^\infty(S;L^2(\O)^3)\cap L^2(S;H^1_{0,\div}(\O)^3)$ such that
		\begin{equation}\label{PeriodicVelocityCompactness}
			\begin{aligned}
				\wt\bv^\e,\ \overline\bv^\e
				&\longrightarrow\bv
				&&\text{strongly in }L^2(S;L^s(\O)^3),\quad 2\leq s<6,\\
				\nabla\wt\bv^\e
				&\rightharpoonup\nabla\bv
				&&\text{weakly in }L^2(\O_T)^{3\times3},
			\end{aligned}
		\end{equation}
		and $\nabla\cdot\bv=0$ in $\mathcal D'(\O_T)$. Moreover, for every
		$\zeta\in C^\infty_{c,\div}(\O)^3$, with $\Rc^\e\zeta$ the solenoidal
		restriction of Lemma~\ref{lem:solenoidal-restriction},
		\begin{equation}\label{VelocityPairingConvergence}
			A_\e^\zeta:=(\bv^\e(\cdot),\Rc^\e\zeta)_{L^2(\O_p^\e)}
			\longrightarrow
			(\bv,\zeta)_{L^2(\O)}
			\qquad\text{strongly in }C([0,T]),
		\end{equation}
		and $\bv$ admits a representative in $C_w([0,T];H^\sigma)$.
	\end{enumerate}
\end{lemma}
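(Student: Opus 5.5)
For part (i), the weak convergences \eqref{PeriodicScalarCompactness} follow from the uniform bound $\|\wt v^\e\|_{L^2(S;H^1(\O))}\le C$, which is \eqref{eq:extension-primal-estimates} applied for a.e.\ $t$, together with weak compactness in the Hilbert space $L^2(S;H^1(\O))$. Under the additional hypotheses, Lemma~\ref{lem:scalar-two-extensions} gives $\wt v^\e,\overline{v^\e}\to v$ strongly in $C([0,T];L^2(\O))$, and the limit agrees with the weak limit. To upgrade to $L^s$ with $2\le s<6$, I would use the interpolation
\[
\|q\|_{L^s}\le\|q\|_{L^2}^{\theta}\|q\|_{L^6}^{1-\theta}
\]
with $\theta\in(0,1]$. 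The $L^6$ factor is bounded uniformly in $t$ by $\|\wt v^\e\|_{L^\infty(S;H^1(\O))}\le C$ and the embedding $H^1(\O)\hookrightarrow L^6(\O)$. The zero extension is not in $H^1$, but $\|\overline{v^\e}(t)\|_{L^6(\O)}=\|v^\e(t)\|_{L^6(\O_p^\e)}\le C$ by \eqref{eq:PW-Sobolev}$_2$, so the same interpolation applies to it.

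For part (ii), I would proceed in three steps.

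\emph{Step 1: weak limits.} Set $\wt\bv^\e=\Ec^\e\bv^\e$. Then \eqref{SolenoidalPrimalExtensionH1Bound} gives a bound in $L^2(S;H^1_{0,\div}(\O)^3)$. The isometry \eqref{SolenoidalZeroExtension} bounds $\overline\bv^\e$ in $L^\infty(S;L^2)$, and by \eqref{SolenoidalPrimalExtensionBounds}$_1$ the same holds for $\wt\bv^\e$ up to an $a_\e$-term that is only $L^2$ in time. Weak compactness yields $\bv\in L^2(S;H^1_{0,\div}(\O)^3)$, and $\nabla\cdot\bv=0$ is preserved. By \eqref{PrimalZeroExtensionDifference}, $\overline\bv^\e$ has the same weak limit, which by lower semicontinuity lies in $L^\infty(S;L^2)$.

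\emph{Step 2: the pairings \eqref{VelocityPairingConvergence}.} For fixed $\zeta\in C^\infty_{c,\div}(\O)^3$, the field $\Rc^\e\zeta$ is an admissible time-independent test function. Hence $A^\zeta_\e\in W^{1,4/3}(0,T)$ with
\[
|\partial_tA^\zeta_\e|\le\|\partial_t\bv^\e\|_{\GH^{-1}_{\div}}\,\|\Rc^\e\zeta\|_{H^1(\O_p^\e)}\le C_\zeta\|\partial_t\bv^\e\|_{\GH^{-1}_{\div}}.
\]
This is bounded in $L^{4/3}(S)$, and $|A^\zeta_\e|\le C_\zeta$. The family is therefore equi-H\"older with exponent $1/4$, and Arzel\`a--Ascoli gives uniform convergence along a subsequence. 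To identify the limit, write $A^\zeta_\e=(\overline\bv^\e,\overline{\Rc^\e\zeta})_{L^2(\O)}$. Since $\overline{\Rc^\e\zeta}\to\zeta$ strongly in $L^2$ by \eqref{SolenoidalRestrictionConvergence} and $\overline\bv^\e\rightharpoonup\bv$, the limit is $(\bv,\zeta)$ in $L^2(S)$ and hence uniformly. A diagonal argument over a countable dense subset of $C^\infty_{c,\div}(\O)^3$ extends this to the whole class.

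\emph{Step 3: strong convergence (the main obstacle).} Since Aubin--Lions is not available, I would argue via a Lions-type lemma. The uniform bound $\|\overline\bv^\e(t)\|_{L^2}\le C$ combined with the uniform convergence of the pairings gives $\overline\bv^\e(t)\rightharpoonup\bv(t)$ weakly in $H^\sigma$ for every $t$. Indeed, $\overline\bv^\e(t)\in H^\sigma$ by \eqref{SolenoidalZeroExtension}, and $\{\zeta\}$ is dense in $H^\sigma$. The same argument also yields the representative $\bv\in C_w([0,T];H^\sigma)$.

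For strong convergence I would consider $\|\wt\bv^\e-\bv\|_{L^2(\O_T)}^2$. Using a finite-dimensional projection $P_N$ onto $\mathrm{span}\{\zeta_1,\dots,\zeta_N\}$, I would estimate
\[
\|\overline\bv^\e-\bv\|_{L^2}^2\le\bigl\|P_N(\overline\bv^\e-\bv)\bigr\|^2+\bigl\|(I-P_N)(\overline\bv^\e-\bv)\bigr\|^2 .
\]
The first term tends to zero by the pairings. For the second I would use a uniform smallness of $(I-P_N)w$ in $L^2$ for $w$ bounded in $H^1_{0,\div}$, via compactness of $H^1_{0,\div}\Subset H^\sigma$ with the $\zeta_i$ chosen as Stokes eigenfunctions. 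This is applied to $\wt\bv^\e$, with the switch to $\overline\bv^\e$ handled by \eqref{PrimalZeroExtensionDifference}, and the estimate is integrated in time. The delicate point is that $\wt\bv^\e$ is not itself paired with $\Rc^\e\zeta$; this is handled by the $a_\e/\e$ gap and the $L^2$ rate \eqref{eq:restriction-L2-rate}. Once $L^2(\O_T)$ strong convergence holds, $L^2(S;L^s)$ follows by interpolation with the $L^2(S;L^6)$ bound, which holds for $\wt\bv^\e$ by Sobolev and for $\overline\bv^\e$ by \eqref{eq:PW-Sobolev}$_2$.
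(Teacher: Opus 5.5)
Your proposal is correct and uses the same overall mechanism as the paper: pairings against $\Rc^\e\zeta$, Arzel\`a--Ascoli from the $W^{1,4/3}(S)$ bound, projection onto finitely many modes with a uniform tail coming from $H^1_{0,\div}(\O)^3\Subset H^\sigma$, and switching between $\wt\bv^\e$ and $\overline\bv^\e$ through \eqref{PrimalZeroExtensionDifference}. The key compactness step is organized differently, as follows.

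\emph{Order of steps.} You identify the limit of $A^\zeta_\e$ at once from $\overline\bv^\e\rightharpoonup\bv$ and $\overline{\Rc^\e\zeta}\to\zeta$. You then upgrade this to $\overline\bv^\e(t)\rightharpoonup\bv(t)$ in $H^\sigma$ for every (or a.e.) $t$, and treat the low modes by dominated convergence in time. The paper instead shows that each coefficient $(\wt\bv^\e,\zeta_k)_{L^2(\O)}$ converges strongly in $L^2(S)$, via relative compactness plus weak identification. It identifies the uniform limit of $A^\zeta_\e$ only afterwards, in Step~5.

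\emph{Choice of basis.} Your pointwise-in-time weak convergence is what makes Stokes eigenfunctions admissible. They lie in $H^1_{0,\div}(\O)^3$ but not in $C^\infty_{c,\div}(\O)^3$, so $\Rc^\e\zeta_k$ is undefined and \eqref{VelocityPairingConvergence} cannot be applied to them directly. The low-mode term must therefore pass through the weak convergence in $H^\sigma$ obtained by density, which is what you do. In return you get the explicit tail $\|(I-P_N)w\|_{L^2}^2\le\lambda_{N+1}^{-1}\|\nabla w\|_{L^2}^2$. The paper keeps a Gram--Schmidt basis inside $C^\infty_{c,\div}(\O)^3$, so every pairing is available through $\Rc^\e$. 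It gets $\gamma_N\to0$ non-explicitly, from uniform convergence of $I-P_N$ on the compact closure of the unit ball of $H^1_{0,\div}(\O)^3$.

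Two small points:
\begin{enumerate}
\item The claim $\overline\bv^\e(t)\in H^\sigma$ needs more than \eqref{SolenoidalZeroExtension}$_2$. You also need orthogonality to $\nabla H^1(\O)$, which holds because $\bv^\e=0$ on $\partial\O$ and $\bv^\e\cdot\Gn^\e=0$ on $\Gamma_s^\e$. Alternatively, define $\bv(t)\in H^\sigma$ by the Riesz representation of $\zeta\mapsto\lim_\e A^\zeta_\e(t)$.
\item Once the limit of $A^\zeta_\e$ is identified, no diagonal argument is needed, since the whole extracted sequence converges for each $\zeta$.
\end{enumerate}
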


\begin{proof}	
	\textbf{Step 1: Scalar compactness.}
	By Lemma~\ref{Emain}, $\{\wt v^\e\}$ is bounded in $L^2(S;H^1(\O))$, which
	gives \eqref{PeriodicScalarCompactness} after extraction; the second
	convergence there is the first one composed with the (linear, bounded)
	gradient.
	
	Assume now in addition the $L^\infty$-in-time and time-derivative bounds.
	Lemma~\ref{lem:scalar-two-extensions} gives, after a further extraction,
	$\wt v^\e\to v$ strongly in $C([0,T];L^2(\O))$, together with
	$\|\wt v^\e-\overline{v^\e}\|_{L^\infty(S;L^2(\O))}\leq Ca_\e/\e\to0$, so
	that $\overline{v^\e}$ has the same limit; this limit coincides with the weak limit in \eqref{PeriodicScalarCompactness},
	both being limits in $L^2(\O_T)$; the uniform bound in
	$L^\infty(S;H^1(\O))$ also gives $v\in L^\infty(S;H^1(\O))$ after a
	weak-$*$ extraction.
	
	For the upgrade to $L^s$, $2\leq s<6$, we interpolate. The primal extension
	is bounded in $L^\infty(S;H^1(\O))\hookrightarrow L^\infty(S;L^6(\O))$ by
	\eqref{eq:extension-primal-estimates},
	whereas the zero extension is not covered by that bound and is estimated
	directly using the uniform Sobolev embedding
	\eqref{eq:PW-Sobolev}$_2$. 
	
	\textbf{Step 2: Weak velocity compactness.}
	By Lemma~\ref{LemD02} and \eqref{VelocityCompactnessAssumptions},
	\begin{equation}\label{VelocityPrimalAndZeroBounds}
		\|\wt\bv^\e\|_{L^2(S;H^1_{0,\div}(\O)^3)}
		+\|\overline\bv^\e\|_{L^\infty(S;L^2(\O)^3)}
		\leq C,
	\end{equation}
	the second bound being the norm identity \eqref{SolenoidalZeroExtension}$_1$,
	and
	\begin{equation}\label{VelocityPrimalZeroCloseness}
		\wt\bv^\e-\overline\bv^\e\longrightarrow0
		\qquad\text{strongly in }L^2(\O_T)^3 ,
	\end{equation}
	by \eqref{PrimalZeroExtensionDifference}.
	Consequently there exist a subsequence and
	$\bv\in L^2(S;H^1_{0,\div}(\O)^3)$ with
	\begin{equation}\label{VelocityWeakFixedDomainConvergences}
		\wt\bv^\e\rightharpoonup\bv
		\qquad\text{weakly in }L^2(S;H^1_0(\O)^3).
	\end{equation}
	In particular $\nabla\cdot\bv=0$, since $\div\,\wt\bv^\e=0$ in $\O$ for
	every $\e$ by Lemma~\ref{LemD02} and the divergence is weakly continuous.
	
	The bound \eqref{VelocityPrimalAndZeroBounds} does {not} yield strong
	compactness by the Aubin--Lions--Simon theorem: the time derivative in
	\eqref{VelocityCompactnessAssumptions} is controlled only in the dual of
	$\GH^1_{\div}(\O_p^\e)$, a space that varies with $\e$, and no
	$\e$-uniform bound on $\partial_t\wt\bv^\e$ in a fixed space is available,
	the extension operator $\Ec^\e$ of Lemma~\ref{LemD02} carrying no dual
	estimate. Steps~3 and 4 replace that argument by a compactness statement
	for the scalar pairings against a fixed basis, which is enough because the
	spatial bound \eqref{VelocityPrimalAndZeroBounds} already lives on the
	fixed domain $\O$.
	
	\textbf{Step 3: Compactness of the velocity pairings.}
	A fixed $\zeta\in C^\infty_{c,\div}(\O)^3$ does not satisfy
	$\zeta\cdot\Gn^\e=0$ on $\Gamma_s^\e$, hence does not belong to
	$\GH^1_{\div}(\O_p^\e)$ and is not an admissible pairing for
	$\partial_t\bv^\e$; we use instead the solenoidal restriction
	$\Rc^\e\zeta\in\GH^1_{\div}(\O_p^\e)$ of
	Lemma~\ref{lem:solenoidal-restriction} and set
	\[
	A_\e^\zeta(t):=(\bv^\e(t),\Rc^\e\zeta)_{L^2(\O_p^\e)} .
	\]
	Since $\Rc^\e\zeta$ is independent of time, it may be taken out of the
	difference quotients defining the distributional derivative, so that
	\[
	\frac{\md}{\md t}A_\e^\zeta
	=\bigl\langle\partial_t\bv^\e,\Rc^\e\zeta\bigr\rangle_
	{\GH^{-1}_{\div}(\O_p^\e),\GH^1_{\div}(\O_p^\e)}
	\qquad\text{in }\mathcal D'(S).
	\]
	By Cauchy--Schwarz,
	\eqref{VelocityCompactnessAssumptions} and the uniform bound
	\eqref{SolenoidalRestrictionConvergence}$_2$,
	\[
	\|A_\e^\zeta\|_{L^\infty(S)}
	\leq\|\bv^\e\|_{L^\infty(S;L^2(\O_p^\e))}
	\|\Rc^\e\zeta\|_{L^2(\O_p^\e)}\leq C_\zeta ,
	\quad
	\Bigl\|\frac{\md}{\md t}A_\e^\zeta\Bigr\|_{L^{4/3}(S)}
	\leq
	\|\partial_t\bv^\e\|_{L^{4/3}(S;\GH^{-1}_{\div}(\O_p^\e))}
	\|\Rc^\e\zeta\|_{\GH^1_{\div}(\O_p^\e)}
	\leq C_\zeta .
	\]
	 Hence
	$\|A_\e^\zeta\|_{W^{1,4/3}(S)}\leq C_\zeta$, and since $4/3>1$ the
	embedding $W^{1,4/3}(S)\hookrightarrow C^{0,1/4}([0,T])$ holds, so
	$\{A_\e^\zeta\}_\e$ is bounded and equicontinuous on $[0,T]$. By the
	Arzel\`a--Ascoli theorem it is relatively compact in $C([0,T])$.
	
	Next, set
$B_\e^\zeta(t):=(\overline\bv^\e(t),\zeta)_{L^2(\O)}
=(\bv^\e(t),\zeta)_{L^2(\O_p^\e)}$, the second identity holding because
$\overline\bv^\e$ vanishes on $\O_s^\e$. For a.e.\ $t\in S$,
\[
|B_\e^\zeta(t)-A_\e^\zeta(t)|
=\bigl|(\bv^\e(t),\zeta-\Rc^\e\zeta)_{L^2(\O_p^\e)}\bigr|
\leq
\|\bv^\e\|_{L^\infty(S;L^2(\O_p^\e))}
\|\zeta-\overline{\Rc^\e\zeta}\|_{L^2(\O)} ,
\]
where the integral over $\O_p^\e$ has been rewritten over $\O$, since
$\overline\bv^\e$ vanishes on $\O_s^\e$.
By
\eqref{SolenoidalRestrictionConvergence}$_1$,
\begin{equation}\label{eq:AB-difference}
	\|B_\e^\zeta-A_\e^\zeta\|_{L^\infty(S)}\longrightarrow0 .
\end{equation}
Since $(\wt\bv^\e,\zeta)_{L^2(\O)}-B_\e^\zeta
=(\wt\bv^\e-\overline\bv^\e,\zeta)_{L^2(\O)}$, the bound
\eqref{eq:AB-difference} and \eqref{VelocityPrimalZeroCloseness} give
\begin{equation}\label{eq:tilde-A}
	\bigl\|(\wt\bv^\e,\zeta)_{L^2(\O)}-A_\e^\zeta\bigr\|_{L^2(S)}
	\leq T^{1/2}\|B_\e^\zeta-A_\e^\zeta\|_{L^\infty(S)}
	+\|\zeta\|_{L^2(\O)}\|\wt\bv^\e-\overline\bv^\e\|_{L^2(\O_T)}
	\longrightarrow0 .
\end{equation}
By the relative compactness of $\{A_\e^\zeta\}$ in $C([0,T])$, every
subsequence has a further subsequence converging in $C([0,T])$, hence in
$L^2(S)$; by \eqref{eq:tilde-A} the corresponding subsequence of
$(\wt\bv^\e,\zeta)_{L^2(\O)}$ converges in $L^2(S)$ to the same limit.
Thus $\{(\wt\bv^\e(\cdot),\zeta)_{L^2(\O)}\}_\e$ is relatively compact in
$L^2(S)$ for every $\zeta\in C^\infty_{c,\div}(\O)^3$.
	
	\textbf{Step 4: Strong velocity compactness.}

The space $H^\sigma$ is the closure of $C^\infty_{c,\div}(\O)^3$ in
$L^2(\O)^3$. It is therefore a separable Hilbert space, and
$C^\infty_{c,\div}(\O)^3$ contains a countable subset which is dense in
$H^\sigma$. Since $C^\infty_{c,\div}(\O)^3$ is a vector space, the
Gram--Schmidt procedure applied to this subset yields an orthonormal basis
$\{\zeta_k\}_{k\geq1}\subset C^\infty_{c,\div}(\O)^3$ of $H^\sigma$. For
$N\geq1$ let
\[
P_N z:=\sum_{k=1}^N(z,\zeta_k)_{L^2(\O)}\,\zeta_k ,\qquad z\in H^\sigma ,
\]
be the orthogonal projection onto
$\operatorname{span}\{\zeta_1,\ldots,\zeta_N\}$. Then
$\|P_Nz\|_{L^2(\O)}\leq\|z\|_{L^2(\O)}$,
$\|(I-P_N)z\|_{L^2(\O)}\leq\|z\|_{L^2(\O)}$, and by Parseval's identity
\begin{equation}\label{eq:PN-pointwise}
	\|(I-P_N)z\|_{L^2(\O)}^2=\sum_{k>N}|(z,\zeta_k)_{L^2(\O)}|^2
	\longrightarrow0
	\qquad\text{as }N\to\infty,\ \text{ for every }z\in H^\sigma .
\end{equation}
Since $\wt\bv^\e(t)\in H^1_{0,\div}(\O)^3\subset H^\sigma$ for a.e.\ $t\in S$,
and likewise $\bv(t)\in H^1_{0,\div}(\O)^3$, the projections
$P_N\wt\bv^\e(t)$ and $P_N\bv(t)$ are defined.

By the last assertion of Step~3, for every $k\geq1$ the sequence
$\{(\wt\bv^\e(\cdot),\zeta_k)_{L^2(\O)}\}_\e$ is relatively compact in
$L^2(S)$. For $\chi\in L^2(S)$ the map
$\bw\mapsto\int_S\chi(t)(\bw(t),\zeta_k)_{L^2(\O)}\,\md t$ is a continuous
linear functional on $L^2(S;H^1_0(\O)^3)$, so
\eqref{VelocityWeakFixedDomainConvergences} gives
$(\wt\bv^\e,\zeta_k)_{L^2(\O)}\rightharpoonup(\bv,\zeta_k)_{L^2(\O)}$
weakly in $L^2(S)$. Since strong convergence implies weak convergence,
every $L^2(S)$-cluster point of the sequence equals
$(\bv,\zeta_k)_{L^2(\O)}$; by relative compactness the whole sequence
converges:
\begin{equation}\label{eq:mode-convergence}
	(\wt\bv^\e(\cdot),\zeta_k)_{L^2(\O)}\longrightarrow(\bv(\cdot),\zeta_k)_{L^2(\O)}
	\qquad\text{strongly in }L^2(S)\ \text{ for every }k\geq1 .
\end{equation}
By the orthonormality of $\{\zeta_k\}$ and \eqref{eq:mode-convergence},
for every fixed $N$,
\begin{equation}\label{eq:projection-convergence}
	\|P_N\wt\bv^\e-P_N\bv\|_{L^2(\O_T)}^2
	=\sum_{k=1}^N\bigl\|(\wt\bv^\e,\zeta_k)_{L^2(\O)}
	-(\bv,\zeta_k)_{L^2(\O)}\bigr\|_{L^2(S)}^2
	\longrightarrow0 .
\end{equation}
Set
\[
\gamma_N:=
\sup_{\substack{z\in H^1_{0,\div}(\O)^3\\ \|z\|_{H^1(\O)}\leq1}}
\|(I-P_N)z\|_{L^2(\O)} .
\]
	We claim that $\gamma_N\to0$. Let $\mathbf B$ denote the closed unit ball
of $H^1_{0,\div}(\O)^3$. By the Rellich embedding
$H^1_0(\O)^3\Subset L^2(\O)^3$, and since $\mathbf B\subset H^\sigma$ with
$H^\sigma$ closed in $L^2(\O)^3$, the closure $\overline{\mathbf B}$ is a
compact subset of $H^\sigma$. Since $\|I-P_N\|\leq1$ and $I-P_N\to0$
pointwise on $H^\sigma$ by \eqref{eq:PN-pointwise}, the convergence is
uniform on compact subsets of $H^\sigma$. Hence
\begin{equation}\label{eq:gN}
	\gamma_N\leq\sup_{z\in\overline{\mathbf B}}\|(I-P_N)z\|_{L^2(\O)}\to0.
\end{equation}
Using the definition of $\gamma_N$,
$\|(I-P_N)z\|_{L^2(\O)}\leq\gamma_N\|z\|_{H^1(\O)}$ for every
$z\in H^1_{0,\div}(\O)^3$. Let
$C_0:=\sup_\e\|\wt\bv^\e\|_{L^2(S;H^1(\O))}$, which is finite by
\eqref{VelocityPrimalAndZeroBounds}; by
\eqref{VelocityWeakFixedDomainConvergences} and weak lower semicontinuity
of the norm, also $\|\bv\|_{L^2(S;H^1(\O))}\leq C_0$. Applying the last
inequality with $z=\wt\bv^\e(t)$ and $z=\bv(t)$ and integrating over $S$,
\begin{equation}\label{eq:tail-uniform}
	\|(I-P_N)\wt\bv^\e\|_{L^2(\O_T)}\leq C_0\gamma_N ,
	\qquad
	\|(I-P_N)\bv\|_{L^2(\O_T)}\leq C_0\gamma_N ,
\end{equation}
uniformly in $\e$.

	Writing
$\wt\bv^\e-\bv=P_N(\wt\bv^\e-\bv)+(I-P_N)\wt\bv^\e-(I-P_N)\bv$ and using
\eqref{eq:tail-uniform}, we obtain for every $N$ and every $\e$
\begin{equation}\label{eq:split-estimate}
	\|\wt\bv^\e-\bv\|_{L^2(\O_T)}
	\leq\|P_N\wt\bv^\e-P_N\bv\|_{L^2(\O_T)}+2C_0\gamma_N .
\end{equation}
Taking the $\limsup$ as $\e\to0$ and using
\eqref{eq:projection-convergence}, we get
$\limsup_{\e\to0}\|\wt\bv^\e-\bv\|_{L^2(\O_T)}\leq2C_0\gamma_N$ for every
$N$. Letting $N\to\infty$ and using \eqref{eq:gN} gives
\begin{equation}\label{VelocityStrongFixedDomainConvergence}
	\wt\bv^\e\longrightarrow\bv
	\qquad\text{strongly in }L^2(\O_T)^3 .
\end{equation}
Combining this with \eqref{VelocityPrimalZeroCloseness},
\begin{equation}\label{eq:zero-ext-strong}
	\|\overline\bv^\e-\bv\|_{L^2(\O_T)}
	\leq\|\overline\bv^\e-\wt\bv^\e\|_{L^2(\O_T)}
	+\|\wt\bv^\e-\bv\|_{L^2(\O_T)}\longrightarrow0 .
\end{equation}
By \eqref{VelocityPrimalAndZeroBounds}, $\{\overline\bv^\e\}$ is bounded
in $L^\infty(S;L^2(\O)^3)=\bigl(L^1(S;L^2(\O)^3)\bigr)^*$; since
$L^2(\O_T)^3\subset L^1(S;L^2(\O)^3)$, every weak-$*$ cluster point
coincides with the strong limit in \eqref{eq:zero-ext-strong}. Hence
$\overline\bv^\e\overset{*}{\rightharpoonup}\bv$ in $L^\infty(S;L^2(\O)^3)$
and $\bv\in L^\infty(S;L^2(\O)^3)$.

The sequences $\{\wt\bv^\e\}$ and $\{\overline\bv^\e\}$ are bounded in
$L^2(S;L^6(\O)^3)$, by \eqref{VelocityPrimalAndZeroBounds} with
$H^1_0(\O)\hookrightarrow L^6(\O)$ and by the uniform Sobolev embedding
\eqref{eq:PW-Sobolev}$_2$ on $\O_p^\e$, respectively; so is
$\bv\in L^2(S;H^1_0(\O)^3)$. For $2<s<6$ we interpolate in space and H\"older's
inequality in time give $\|\wt\bv^\e-\bv\|_{L^2(S;L^s(\O))}\longrightarrow0$
by \eqref{VelocityStrongFixedDomainConvergence}, and likewise for
$\overline\bv^\e$ by \eqref{eq:zero-ext-strong}. This is
\eqref{PeriodicVelocityCompactness}$_1$.

	\textbf{Step 5: The pairings and weak continuity in time.}
Fix $\zeta\in C^\infty_{c,\div}(\O)^3$. By \eqref{eq:tilde-A} and
\eqref{VelocityStrongFixedDomainConvergence},
\begin{equation}\label{eq:A-L2}
	\bigl\|A_\e^\zeta-(\bv,\zeta)_{L^2(\O)}\bigr\|_{L^2(S)}
	\leq\bigl\|A_\e^\zeta-(\wt\bv^\e,\zeta)_{L^2(\O)}\bigr\|_{L^2(S)}
	+\|\zeta\|_{L^2(\O)}\|\wt\bv^\e-\bv\|_{L^2(\O_T)}
	\longrightarrow0 .
\end{equation}
	By Step~3, every subsequence of $\{A_\e^\zeta\}$ has a further
subsequence converging in $C([0,T])$ to some $a\in C([0,T])$; by
\eqref{eq:A-L2}, $a=(\bv(\cdot),\zeta)_{L^2(\O)}$ a.e.\ in $S$. Since an
a.e.\ class contains at most one continuous function, all these limits
coincide, and the whole sequence $\{A_\e^\zeta\}$ converges in
$C([0,T])$ to the continuous representative of
$t\mapsto(\bv(t),\zeta)_{L^2(\O)}$.

	Let $\{\zeta_k\}$ be the basis of Step~4, $M:=\|\bv\|_{L^\infty(S;L^2(\O))}$,
and $a_k$ the continuous representative of $(\bv(\cdot),\zeta_k)_{L^2(\O)}$.
Outside a null set $E\subset S$, $\|\bv(t)\|_{L^2(\O)}\leq M$ and
$(\bv(t),\zeta_k)_{L^2(\O)}=a_k(t)$ for all $k$; hence, for $z\in H^\sigma$,
$s,t\in S\setminus E$ and $N\geq1$,
\begin{equation}\label{eq:weak-cont-density}
	|(\bv(t)-\bv(s),z)_{L^2(\O)}|
	\leq\sum_{k=1}^N|a_k(t)-a_k(s)|\,|(z,\zeta_k)_{L^2(\O)}|
	+2M\|(I-P_N)z\|_{L^2(\O)} .
\end{equation}
By \eqref{eq:weak-cont-density}, \eqref{eq:PN-pointwise} and the density
of $S\setminus E$ in $[0,T]$, weak limits along $S\setminus E$ define a
representative of $\bv$ in $C_w([0,T];H^\sigma)$. For this
representative, $(\bv(\cdot),\zeta)_{L^2(\O)}$ is the continuous function
obtained above, which proves \eqref{VelocityPairingConvergence}. This completes the proof.
\end{proof}

\subsection{Proof of Lemma \ref{lem:limit-compactness}}

\begin{proof}[Proof of the Lemma \ref{lem:limit-compactness}]
	\textbf{Step 1: Compactness.}
	By the bound on $\theta^\e$, Lemma~\ref{Est01} gives the estimates
	\eqref{MainE01} uniformly in $\e$. The bounds on $\Gu^\e$ in
	\eqref{MainE01} are the hypotheses \eqref{VelocityCompactnessAssumptions}
	of Lemma~\ref{lem:unfolding-compactness}(ii), which yields the
	convergences of $\wt\Gu^\e$, $\overline{\Gu^\e}$ and $\nabla\wt\Gu^\e$ in
	\eqref{TSL0}, the regularity $\Gu\in\fU\cap C_w([0,T];H^\sigma)$, and
	\eqref{TSL-pairing}. The bounds on $\phi^\e$ in \eqref{MainE01} are the
	hypotheses of the second part of Lemma~\ref{lem:unfolding-compactness}(i),
	which yields the convergences of $\wt\phi^\e$, $\overline{\phi^\e}$ and
	$\nabla\wt\phi^\e$, and $\phi\in\fC$ by
	Lemma~\ref{lem:scalar-two-extensions}. The bound on $\mu^\e$ in
	$L^2(S;H^1(\O_p^\e))$ is the hypothesis of the first part of
	Lemma~\ref{lem:unfolding-compactness}(i), which yields the convergences of
	$\wt\mu^\e$ and $\nabla\wt\mu^\e$, and $\mu\in\fW$.
	
	For the remaining zero extensions, note that
	$\overline{\nabla w^\e}=\chi_p^\e\nabla\wt w^\e$ for
	$w\in\{\Gu,\phi,\mu\}$ and $\overline{\mu^\e}=\chi_p^\e\wt\mu^\e$.
	Since $\nabla\wt w^\e$ and $\wt\mu^\e$ converge weakly in $L^2(\O_T)$,
	Lemma~\ref{lem:hole-error}(ii) gives the same weak limits for these zero
	extensions.
	
	\textbf{Step 2: Nonlinear terms.}
	By \eqref{MainE01}, \eqref{eq:extension-primal-estimates},
	\eqref{VelocityPrimalAndZeroBounds} and $H^1(\O)\hookrightarrow L^6(\O)$,
	\begin{equation}\label{TSL-bounds}
		\|\wt\phi^\e\|_{L^\infty(S;L^6(\O))}
		+\|\phi\|_{L^\infty(S;L^6(\O))}
		+\|\wt\Gu^\e\|_{L^2(S;L^6(\O))}
		+\|\nabla\wt\mu^\e\|_{L^2(\O_T)}\leq C .
	\end{equation}
	For the transport product, H\"older's inequality gives
	\[
	\|\wt\phi^\e\wt\Gu^\e-\phi\Gu\|_{L^2(\O_T)}
	\leq\|\wt\phi^\e-\phi\|_{L^\infty(S;L^4(\O))}
	\|\wt\Gu^\e\|_{L^2(S;L^4(\O))}
	+\|\phi\|_{L^\infty(S;L^4(\O))}\|\wt\Gu^\e-\Gu\|_{L^2(S;L^4(\O))},
	\]
	and both terms tend to zero by \eqref{TSL0} with $s=4$ and
	\eqref{TSL-bounds}. The bound
	$\|\wt\phi^\e\wt\Gu^\e\|_{L^2(\O_T)}
	\leq\|\wt\phi^\e\|_{L^\infty(S;L^4)}\|\wt\Gu^\e\|_{L^2(S;L^4)}\leq C$
	follows in the same way.
	
	For the double-well terms, the mean value theorem and \eqref{F01} give
	$|F(a)-F(b)|\leq C(1+|a|^3+|b|^3)|a-b|$ and
	$|F'(a)-F'(b)|\leq C(1+a^2+b^2)|a-b|$. Hence, by H\"older's inequality
	and \eqref{TSL-bounds},
	\[
	\|F(\wt\phi^\e)-F(\phi)\|_{L^1(\O_T)}
	\leq CT\bigl(1+\|\wt\phi^\e\|^3_{L^\infty(S;L^6)}
	+\|\phi\|^3_{L^\infty(S;L^6)}\bigr)
	\|\wt\phi^\e-\phi\|_{L^\infty(S;L^2)}\longrightarrow0 .
	\]
	For $2\leq s<6$ and $\frac1q=\frac13+\frac1s$, so that $q<2$,
	\[
	\|F'(\wt\phi^\e)-F'(\phi)\|_{L^\infty(S;L^q(\O))}
	\leq C\bigl(1+\|\wt\phi^\e\|^2_{L^\infty(S;L^6)}
	+\|\phi\|^2_{L^\infty(S;L^6)}\bigr)
	\|\wt\phi^\e-\phi\|_{L^\infty(S;L^s)}\longrightarrow0 .
	\]
	Since $|F'(a)|\leq C(1+|a|^3)$, the sequence $\{F'(\wt\phi^\e)\}$ is
	bounded in $L^\infty(S;L^2(\O))$ by \eqref{TSL-bounds}. Every weak
	cluster point in $L^2(\O_T)$ therefore coincides with the strong
	$L^1(\O_T)$ limit $F'(\phi)$, which gives the third line of
	\eqref{TSL02}. For $G$, \eqref{AssG01} gives
	$\|G(\wt\phi^\e)-G(\phi)\|_{L^2(\O_T)}
	\leq c_2\|\wt\phi^\e-\phi\|_{L^2(\O_T)}\to0$ and
	$\|G(\wt\phi^\e)\|_{L^2(\O_T)}\leq c_2\|\wt\phi^\e\|_{L^2(\O_T)}\leq C$.
	
	For the capillary product, let $1\leq r<3/2$ and define $s\in[2,6)$ by
	$\frac1r=\frac12+\frac1s$. Then
	$\|\wt\phi^\e\nabla\wt\mu^\e\|_{L^2(S;L^r)}
	\leq\|\wt\phi^\e\|_{L^\infty(S;L^s)}\|\nabla\wt\mu^\e\|_{L^2(\O_T)}\leq C$.
	For $\boldsymbol\psi\in L^2(S;L^{r'}(\O)^3)$, H\"older's inequality with
	$\frac12+\frac1s+\frac1{r'}=1$ gives
	\[
	\Bigl|\int_{\O_T}\bigl(\wt\phi^\e\nabla\wt\mu^\e-\phi\nabla\mu\bigr)
	\cdot\boldsymbol\psi\Bigr|
	\leq\|\wt\phi^\e-\phi\|_{L^\infty(S;L^s)}\|\nabla\wt\mu^\e\|_{L^2(\O_T)}
	\|\boldsymbol\psi\|_{L^2(S;L^{r'})}
	+\Bigl|\int_{\O_T}(\nabla\wt\mu^\e-\nabla\mu)\cdot\phi\boldsymbol\psi\Bigr| .
	\]
	The first term tends to zero by \eqref{TSL0}. The second tends to zero
	by the weak convergence of $\nabla\wt\mu^\e$, since
	$\|\phi\boldsymbol\psi\|_{L^2(\O_T)}
	\leq\|\phi\|_{L^\infty(S;L^s)}\|\boldsymbol\psi\|_{L^2(S;L^{r'})}$.
	
	\textbf{Step 3: Initial conditions.} 	Let $\zeta\in C^\infty_{c,\div}(\O)^3$. Evaluating \eqref{TSL-pairing} at
	$t=0$, where $\Gu(0)$ is defined since $\Gu\in C_w([0,T];H^\sigma)$, and
	using $\Gu^\e(0)=\Gu_0^\e$, \eqref{ICA01} and
	\eqref{SolenoidalRestrictionConvergence}$_1$, we get
	\[
	(\Gu(0),\zeta)_{L^2(\O)}
	=\lim_{\e\to0}\bigl(\overline{\Gu_0^\e},\overline{\Rc^\e\zeta}\bigr)_{L^2(\O)}
	=(\Gu_0,\zeta)_{L^2(\O)} ,
	\]
	the product of two strongly convergent sequences in $L^2(\O)^3$. Since
	$\Gu(0),\Gu_0\in H^\sigma$ and $C^\infty_{c,\div}(\O)^3$ is dense in
	$H^\sigma$, this gives \eqref{eq:limit-initial-conditions}$_1$.
	
	For the phase field, $\overline{\phi^\e}\to\phi$ in $C([0,T];L^2(\O))$ by
	\eqref{TSL0}, and
	$\|\overline{\phi^\e}(0)-\phi_0\|_{L^2(\O)}=\|\phi_0\|_{L^2(\O_s^\e)}\to0$
	since $\phi_0\in L^2(\O)$ and $|\O_s^\e|\to0$. Evaluating at $t=0$ gives
	\eqref{eq:limit-initial-conditions}$_2$.
\end{proof}

%
%
%

	
\section{The homogenized system}\label{Sec06}

This section contains the proofs of Theorem~\ref{ThHomogenized},
Corollary~\ref{CorGradPhiStrong} and Theorem~\ref{ThmUniq}.

\subsection{Derivation of the homogenized system}
\begin{proof}[Proof of Theorem \ref{ThCritical}]
	Step~1 fixes the test functions, Steps~2 and~3 pass to the limit in
	\eqref{weak-momentum1}--\eqref{weak-mu1}, Step~4 identifies
	$\partial_t\Gu$ and extends the identities to the natural test spaces,
	and Step~5 proves \eqref{weak-energy2}. The regularity of the limit
	fields is part of Lemma~\ref{lem:limit-compactness}, and the initial
	conditions are Lemma~\ref{lem:limit-compactness}(iii). Throughout, for a
	field $w^\e$ on $\O_p^\e$ we write
	$\int_{S\X\O_p^\e}w^\e\psi\,\md(x,t)=\int_{\O_T}\overline{w^\e}\psi\,\md(x,t)$
	and use $\overline{w^\e}=\chi_p^\e\wt w^\e$.
	
	\textbf{Step 1: Test functions.}
	For the scalar equations we take
	\begin{equation}\label{ScalarTestFunctions}
		\varphi_i(t,x)=\chi(t)\psi(x),
		\qquad
		\chi\in C_c^\infty(S),
		\qquad
		\psi\in C^\infty(\overline\O),
		\qquad i\in\{2,3\}.
	\end{equation}
	Since the boundary conditions for $\phi^\e$ and $\mu^\e$ on
	$\partial\O_p^\e$ are homogeneous Neumann conditions, hence natural, the
	restrictions $\varphi_i|_{\O_p^\e}$ belong to $L^2(S;H^1(\O_p^\e))$ and
	are admissible in \eqref{weak-phi1} and \eqref{weak-mu1} without any
	$\e$-dependent correction.
	
	For the momentum equation we take
	\begin{equation}\label{VelocityTestFunctions}
		\bvp_1(t,x)=\chi(t)\zeta(x),
		\qquad
		\chi\in C_c^\infty(S),
		\qquad
		\zeta\in C^\infty_{c,\div}(\O)^3,
		\qquad
		\bvp_1^\e:=\chi(t)\,\Rc^\e\zeta ,
	\end{equation}
	with $\Rc^\e\zeta\in\GH^1_{\div}(\O_p^\e)$ the solenoidal restriction of
	Lemma~\ref{lem:solenoidal-restriction}, so that
	$\bvp_1^\e\in L^\infty\bigl(S;\GH^1_{\div}(\O_p^\e)\bigr)$ is admissible
	in \eqref{weak-momentum1}. By
	\eqref{SolenoidalRestrictionConvergence}$_1$ and the Sobolev embedding
	$H^1(\O)\hookrightarrow L^6(\O)$,
	\begin{equation}\label{VelocityTestConvergence}
		\overline{\Rc^\e\zeta}\to\zeta
		\qquad\text{strongly in }H^1(\O)^3,
		\text{ hence in }L^s(\O)^3\text{ for }2\le s\le6 .
	\end{equation}
	Moreover, $\overline{\Rc^\e\zeta}$ vanishes on a neighbourhood of every
	obstacle, so that for every $\e$
	\begin{equation}\label{NoHoleErrorMomentum}
		\int_{S\X\O_s^\e}(\,\cdot\,)\cdot\overline{\Rc^\e\zeta}\,\md(x,t)=0 ,
		\qquad
		\int_{S\X\O_s^\e}(\,\cdot\,):D\bigl(\overline{\Rc^\e\zeta}\bigr)\,\md(x,t)=0 .
	\end{equation}
	
	\textbf{Step 2: Phase and chemical-potential equations.}
	We use $\varphi_2$ in \eqref{weak-phi1} and $\varphi_3$ in
	\eqref{weak-mu1}. Integrating the transport term by parts as in Step~3
	of the proof of Lemma~\ref{Est01}, the boundary terms vanishing because
	$\Gu^\e=0$ on $\partial\O$ and $\Gu^\e\cdot\Gn^\e=0$ on $\Gamma_s^\e$,
	we obtain
	\[
	\int_{S\X\O^\e_p}
	(\Gu^\e\cdot\nabla\phi^\e)\varphi_2\,\md(x,t)
	=
	-\int_{\O_T}
	\chi_p^\e\,\wt\phi^\e\wt\Gu^\e\cdot\nabla\varphi_2\,\md(x,t).
	\]
	By \eqref{TSL02}$_1$ and Lemma~\ref{lem:hole-error}(iii),
	$\chi_p^\e\wt\phi^\e\wt\Gu^\e\to\phi\Gu$ strongly in $L^2(\O_T)^3$, hence
	\begin{equation}\label{PhaseConvectionLimit}
		\int_{S\X\O^\e_p}
		(\Gu^\e\cdot\nabla\phi^\e)\varphi_2\,\md(x,t)
		\longrightarrow
		-\int_{\O_T}\phi\,\Gu\cdot\nabla\varphi_2\,\md(x,t).
	\end{equation}
	For the remaining terms, the zero extensions $\overline{\nabla\mu^\e}$,
	$\overline{\nabla\phi^\e}$ and $\overline{\mu^\e}$ converge weakly in
	$L^2(\O_T)$ by \eqref{TSL0}. Moreover,
	$\chi_p^\e G(\wt\phi^\e)\to G(\phi)$ strongly and
	$\chi_p^\e F'(\wt\phi^\e)\rightharpoonup F'(\phi)$ weakly in
	$L^2(\O_T)$, by \eqref{TSL02}$_{4,3}$ and
	Lemma~\ref{lem:hole-error}(iii),(ii). Since $\varphi_2$, $\varphi_3$,
	$\nabla\varphi_2$ and $\nabla\varphi_3$ belong to $L^2(\O_T)$, we obtain
	\begin{equation}\label{ScalarLinearLimits}
		\begin{aligned}
			\int_{S\X\O^\e_p}\nabla\mu^\e\cdot\nabla\varphi_2\,\md(x,t)
			&\to
			\int_{\O_T}\nabla\mu\cdot\nabla\varphi_2\,\md(x,t),
			&\quad
			\int_{S\X\O^\e_p}\nabla\phi^\e\cdot\nabla\varphi_3\,\md(x,t)
			&\to
			\int_{\O_T}\nabla\phi\cdot\nabla\varphi_3\,\md(x,t),
			\\
			\int_{S\X\O^\e_p}G(\phi^\e)\varphi_2\,\md(x,t)
			&\to
			\int_{\O_T}G(\phi)\varphi_2\,\md(x,t),
			&\quad
			\int_{S\X\O^\e_p}\mu^\e\varphi_3\,\md(x,t)
			&\to
			\int_{\O_T}\mu\varphi_3\,\md(x,t),
			\\
			\int_{S\X\O^\e_p}F'(\phi^\e)\varphi_3\,\md(x,t)
			&\to
			\int_{\O_T}F'(\phi)\varphi_3\,\md(x,t).
		\end{aligned}
	\end{equation}
	
	It remains to treat the time derivative. By \eqref{MainE01} and
	Lemma~\ref{lem:scalar-two-extensions},
	$\|\partial_t\overline{\phi^\e}\|_{L^2(S;H^{-1}(\O))}\le C$, so every
	subsequence has a further subsequence with
	$\partial_t\overline{\phi^\e}\rightharpoonup\Xi$ weakly in
	$L^2(S;H^{-1}(\O))$. For $\eta\in H^1(\O)$ and $\chi\in C_c^\infty(S)$,
	\[
	\int_S\chi(t)\bigl\langle\partial_t\overline{\phi^\e}(t),\eta\bigr\rangle
	\,\md t
	=-\int_S\chi'(t)\bigl(\overline{\phi^\e}(t),\eta\bigr)_{L^2(\O)}\,\md t ,
	\]
	and passing to the limit with \eqref{TSL0}$_2$ gives
	$\int_S\chi\langle\Xi,\eta\rangle\,\md t
	=-\int_S\chi'(\phi,\eta)_{L^2(\O)}\,\md t$, that is,
	$\Xi=\partial_t\phi$. Hence \eqref{LimitTimeRegularity}$_2$ holds, the
	whole sequence converges, and, since
	$\langle\partial_t\overline{\phi^\e},\eta\rangle_{H^{-1}(\O),H^1(\O)}
	=\langle\partial_t\phi^\e,\eta|_{\O_p^\e}\rangle_{H^{-1}(\O_p^\e),H^1(\O_p^\e)}$
	by Lemma~\ref{lem:scalar-two-extensions},
	\begin{equation}\label{PhaseTimeLimit}
		\int_S\langle\partial_t\phi^\e,\varphi_2\rangle\,\md t
		=
		\int_S\langle\partial_t\overline{\phi^\e},\varphi_2\rangle\,\md t
		\longrightarrow
		\int_S\langle\partial_t\phi,\varphi_2\rangle\,\md t .
	\end{equation}
	Combining \eqref{PhaseConvectionLimit}--\eqref{PhaseTimeLimit}, the
	identities \eqref{HomogenizedPhase} and \eqref{HomogenizedPotential} hold
	for all test functions of the form \eqref{ScalarTestFunctions}.
	
	\textbf{Step 3: The momentum equation.}
	We use $\bvp_1^\e$ in \eqref{weak-momentum1}. By
	\eqref{NoHoleErrorMomentum}, every integral over $S\X\O_p^\e$ in
	\eqref{weak-momentum1} equals the integral over $\O_T$ in which the
	unknowns are replaced by any of their extensions.
	
	As $\bvp_1^\e$ vanishes for $t$ near $0$ and $T$, integration by parts in
	time gives, using $\overline{\Gu^\e}\to\Gu$ strongly in $L^2(\O_T)^3$ by
	\eqref{TSL0}$_1$ and \eqref{VelocityTestConvergence},
	\begin{equation}\label{CriticalTimeLimit}
		\int_S\langle\partial_t\Gu^\e,\bvp_1^\e\rangle\,\md t
		=
		-\int_{\O_T}\overline{\Gu^\e}\cdot\chi'(t)\,\overline{\Rc^\e\zeta}
		\,\md(x,t)
		\longrightarrow
		-\int_{\O_T}\Gu\cdot\partial_t\bvp_1\,\md(x,t).
	\end{equation}
	
	For the convective term, we write the integral over $\O$ with the zero
	extension of $\Gu^\e$. Using \eqref{eq:trilinear-form} and
	$\Rey_\e\le1$, H\"older's inequality gives, for a.e.\ $t\in S$,
	\[
	\left|\Rey_\e\int_{\O}\overline{\Gu^\e}\otimes\overline{\Gu^\e}:
	\nabla\bvp_1^\e\,\md x\right|
	\le
	\Rey_\e\,\|\overline{\Gu^\e}\|^2_{L^4(\O)}
	\|\nabla\bvp_1^\e\|_{L^2(\O)} ,
	\]
	whence, integrating in time and applying H\"older's inequality with the
	exponents $\tfrac34+\tfrac14=1$,
	\begin{equation}\label{ConvectiveVanishes}
		\left|\Rey_\e\int_{S\X\O_p^\e}
		b^\e(\Gu^\e,\Gu^\e,\bvp_1^\e)\,\md(x,t)\right|
		\le
		\Rey_\e\,
		\|\overline{\Gu^\e}\|^2_{L^{8/3}(S;L^4(\O)^3)}
		\|\nabla\bvp_1^\e\|_{L^4(S;L^2(\O)^{3\times3})} .
	\end{equation}
	Both norms on the right are bounded uniformly in $\e$. For the velocity,
	the interpolation inequality
	$\|v\|_{L^4}\le\|v\|_{L^2}^{1/4}\|v\|_{L^6}^{3/4}$, valid on any domain,
	and the uniform Sobolev embedding \eqref{eq:PW-Sobolev}$_2$ give
	$\|\overline{\Gu^\e}(t)\|_{L^4(\O)}=\|\Gu^\e(t)\|_{L^4(\O_p^\e)}
	\le C\|\Gu^\e(t)\|_{L^2(\O_p^\e)}^{1/4}
	\|\Gu^\e(t)\|_{H^1(\O_p^\e)}^{3/4}$, hence
	\[
	\|\overline{\Gu^\e}\|^{8/3}_{L^{8/3}(S;L^4(\O))}
	\le C\|\Gu^\e\|^{2/3}_{L^\infty(S;L^2(\O_p^\e))}
	\|\Gu^\e\|^{2}_{L^2(S;H^1(\O_p^\e))}\le C
	\]
	by \eqref{MainE01}. For the test function,
	$\nabla\bvp_1^\e=\chi(t)\nabla\overline{\Rc^\e\zeta}$ with
	$\|\nabla\overline{\Rc^\e\zeta}\|_{L^2(\O)}\le C_\zeta$ by
	\eqref{SolenoidalRestrictionConvergence}$_2$, so that
	$\|\nabla\bvp_1^\e\|_{L^4(S;L^2(\O))}\le C_\zeta\|\chi\|_{L^4(S)}$.
	Consequently the right-hand side of \eqref{ConvectiveVanishes} is
	bounded by $C_\zeta\Rey_\e$, and
	\begin{equation}\label{ConvectiveLimit}
		\Rey_\e\int_{S\X\O_p^\e}
		b^\e(\Gu^\e,\Gu^\e,\bvp_1^\e)\,\md(x,t)
		\longrightarrow 0
		\qquad\text{as }\e\to0
	\end{equation}
	by \eqref{ReynoldsAssumption}$_2$.
	
	For the viscous term, $D\wt\Gu^\e\rightharpoonup D\Gu$ weakly in
	$L^2(\O_T)^{3\times3}$ by \eqref{TSL0}$_1$, while
	$D(\chi\,\overline{\Rc^\e\zeta})\to D\bvp_1$ strongly in
	$L^2(\O_T)^{3\times3}$ by \eqref{VelocityTestConvergence}, so that
	\begin{equation}\label{CriticalPeriodicViscosityLimit}
		2\nu\int_{S\X\O_p^\e}D\Gu^\e:D\bvp_1^\e\,\md(x,t)
		=
		2\nu\int_{\O_T}D\wt\Gu^\e:D\bigl(\chi\,\overline{\Rc^\e\zeta}\bigr)
		\,\md(x,t)
		\longrightarrow
		2\nu\int_{\O_T}D\Gu:D\bvp_1\,\md(x,t).
	\end{equation}	
	For the capillary term, fix $r\in(6/5,3/2)$ and let $s:=r'\in(3,6)$. By
	\eqref{TSL02}$_5$, $\wt\phi^\e\nabla\wt\mu^\e\rightharpoonup\phi\nabla\mu$
	weakly in $L^2(S;L^r(\O)^3)$, while
	$\chi\,\overline{\Rc^\e\zeta}\to\bvp_1$ strongly in
	$L^\infty(S;L^s(\O)^3)$ by \eqref{VelocityTestConvergence}, whence
	\begin{equation}\label{CriticalCapillaryLimit}
		\lambda\int_{S\X\O_p^\e}
		\phi^\e\nabla\mu^\e\cdot\bvp_1^\e\,\md(x,t)
		=
		\lambda\int_{\O_T}
		\wt\phi^\e\nabla\wt\mu^\e\cdot\chi\,\overline{\Rc^\e\zeta}\,\md(x,t)
		\longrightarrow
		\lambda\int_{\O_T}\phi\,\nabla\mu\cdot\bvp_1\,\md(x,t).
	\end{equation}
	For the force term, \eqref{ForceConvergence} gives
	$\overline{\th^\e}\rightharpoonup\th_0$ weakly in $L^2(\O_T)^3$, and
	pairing with \eqref{VelocityTestConvergence},
	\begin{equation}\label{CriticalForceLimit}
		\int_{S\X\O_p^\e}\th^\e\cdot\bvp_1^\e\,\md(x,t)
		=
		\int_{\O_T}\overline{\th^\e}\cdot\chi\,\overline{\Rc^\e\zeta}
		\,\md(x,t)
		\longrightarrow
		\int_{\O_T}\th_0\cdot\bvp_1\,\md(x,t).
	\end{equation}
	Collecting \eqref{CriticalTimeLimit}--\eqref{CriticalForceLimit}, we
	obtain
	\begin{equation}\label{MomentumWithoutTimeDerivative}
		-\int_{\O_T}\Gu\cdot\partial_t\bvp_1\,\md(x,t)
		+2\nu\int_{\O_T}D\Gu:D\bvp_1\,\md(x,t)
		+\lambda\int_{\O_T}\phi\,\nabla\mu\cdot\bvp_1\,\md(x,t)
		=\int_{\O_T}\th_0\cdot\bvp_1\,\md(x,t)
	\end{equation}
	for all test functions of the form \eqref{VelocityTestFunctions}.
	
	\textbf{Step 4: Time derivative of $\Gu$ and extension to the natural
		test spaces.}
	For a.e.\ $t\in S$ define $g(t):H^1_{0,\div}(\O)^3\to\R$ by
	\begin{equation}\label{eq:momentum-functional}
		\langle g(t),z\rangle
		:=
		-2\nu\int_\O D\Gu(t):Dz\,\md x
		-\lambda\int_\O\phi(t)\nabla\mu(t)\cdot z\,\md x
		+\int_\O\th_0(t)\cdot z\,\md x ,
		\qquad z\in H^1_{0,\div}(\O)^3 .
	\end{equation}
	Each of the three terms is linear in $z$. For the second we use
	H\"older's inequality with $\tfrac23+\tfrac13=1$, the embedding
	$H^1(\O)\hookrightarrow L^3(\O)$ and
	\[
	\|\phi\nabla\mu\|_{L^2(S;L^{3/2}(\O)^3)}
	\le\|\phi\|_{L^\infty(S;L^6(\O))}\|\nabla\mu\|_{L^2(\O_T)} ,
	\]
	which is finite by Lemma~\ref{lem:limit-compactness}. Hence, for a.e.\
	$t\in S$,
	\[
	\|g(t)\|_{H^{-1}_{0,\div}(\O)^3}
	\le
	C\Bigl(\|D\Gu(t)\|_{L^2(\O)}
	+\|\phi(t)\nabla\mu(t)\|_{L^{3/2}(\O)}
	+\|\th_0(t)\|_{L^2(\O)}\Bigr),
	\]
	and the right-hand side belongs to $L^2(S)$. Therefore
	\begin{equation}\label{eq:g-in-dual}
		g\in L^2\bigl(S;H^{-1}_{0,\div}(\O)^3\bigr).
	\end{equation}
	For $\zeta\in C^\infty_{c,\div}(\O)^3$ and $\chi\in C_c^\infty(S)$, the
	identity \eqref{MomentumWithoutTimeDerivative} reads
	\[
	-\int_S\chi'(t)\bigl(\Gu(t),\zeta\bigr)_{L^2(\O)}\,\md t
	=\int_S\chi(t)\bigl\langle g(t),\zeta\bigr\rangle\,\md t ,
	\]
	that is, the distributional time derivative of $\Gu$, tested against
	divergence-free fields, equals $g$. Since $C^\infty_{c,\div}(\O)^3$ is
	dense in $H^1_{0,\div}(\O)^3$ and $g(t)$ is continuous on that space, the
	identification extends to all $z\in H^1_{0,\div}(\O)^3$. Thus
	$\partial_t\Gu=g$, and \eqref{LimitTimeRegularity}$_1$ follows from
	\eqref{eq:g-in-dual}. In particular, the first term of
	\eqref{MomentumWithoutTimeDerivative} equals
	$\int_S\langle\partial_t\Gu,\bvp_1\rangle\,\md t$, which turns
	\eqref{MomentumWithoutTimeDerivative} into \eqref{HomogenizedMomentum}.
	
	It remains to enlarge the class of test functions. By
	\eqref{eq:momentum-functional} and \eqref{eq:g-in-dual}, every term of
	\eqref{HomogenizedMomentum} is continuous with respect to convergence in
	$L^2(S;H^1_{0,\div}(\O)^3)$. The linear span of the fields
	\eqref{VelocityTestFunctions} is dense there, since
	$C^\infty_{c,\div}(\O)^3$ is dense in $H^1_{0,\div}(\O)^3$ and
	$C_c^\infty(S)\otimes H^1_{0,\div}(\O)^3$ is dense in
	$L^2(S;H^1_{0,\div}(\O)^3)$. Hence \eqref{HomogenizedMomentum} holds for
	every $\bvp_1\in L^2(S;H^1_{0,\div}(\O)^3)$.
	
	Similarly, every term of \eqref{HomogenizedPhase} and
	\eqref{HomogenizedPotential} is continuous on $L^2(S;H^1(\O))$. This
	follows from $\partial_t\phi\in L^2(S;H^{-1}(\O))$,
	$\mu\in L^2(S;H^1(\O))$ and $\phi\in L^\infty(S;H^1(\O))$, together with
	$\phi\Gu\in L^2(\O_T)^3$, $F'(\phi)\in L^\infty(S;L^2(\O))$ and
	$G(\phi)\in L^\infty(S;L^2(\O))$. The last three hold by
	$H^1(\O)\hookrightarrow L^6(\O)$, \eqref{F01} and \eqref{SourceGrowth}.
	Since $C_c^\infty(S)\otimes C^\infty(\overline\O)$ is dense in
	$L^2(S;H^1(\O))$, the identities extend to every
	$\varphi_2,\varphi_3\in L^2(S;H^1(\O))$.
	
	\textbf{Step 5: The energy estimate.}
	Let $\chi\in C_c^\infty(S)$ with $\chi\ge0$, multiply the $\e$-level
	energy inequality \eqref{weak-energy1} by $\chi(t)$ and integrate over
	$S$. By Fubini's theorem,
	\[
	\int_S\chi(t)\int_0^t\!\!\int_{\O_p^\e}|D(\Gu^\e)|^2\,\md(x,s)\,\md t
	=
	\int_S w(s)\int_{\O_p^\e}|D(\Gu^\e)|^2\,\md x\,\md s ,
	\qquad
	w(s):=\int_s^T\chi(t)\,\md t\ \ge 0 ,
	\]
	and similarly for $\nabla\mu^\e$ and for the two time integrals on the
	right-hand side, so that every term is a space--time integral. We pass
	to the limit term by term.
	
	Since $\overline{\Gu^\e}\to\Gu$ strongly in $L^2(\O_T)^3$ by
	\eqref{TSL0}$_1$,
	\[
	\int_S\chi\|\Gu^\e(t)\|_{L^2(\O_p^\e)}^2\,\md t
	=
	\bigl\|\sqrt{\chi}\,\overline{\Gu^\e}\bigr\|^2_{L^2(\O_T)}
	\longrightarrow
	\int_S\chi\|\Gu(t)\|^2_{L^2(\O)}\,\md t .
	\]
	The zero extensions $\overline{D\Gu^\e}$, $\overline{\nabla\phi^\e}$ and
	$\overline{\nabla\mu^\e}$ converge weakly in $L^2(\O_T)$ to $D\Gu$,
	$\nabla\phi$ and $\nabla\mu$ by \eqref{TSL0}. Since $\sqrt\chi$ and
	$\sqrt w$ belong to $L^\infty(S)$ and do not depend on $\e$, weak lower
	semicontinuity of the norm gives
	\[
	\begin{aligned}
		\int_S\chi\|\nabla\phi(t)\|^2_{L^2(\O)}\,\md t
		&\le\liminf_{\e\to0}
		\bigl\|\sqrt\chi\,\overline{\nabla\phi^\e}\bigr\|^2_{L^2(\O_T)},\\
		\int_S w\|D\Gu(s)\|^2_{L^2(\O)}\,\md s
		&\le\liminf_{\e\to0}
		\bigl\|\sqrt w\,\overline{D\Gu^\e}\bigr\|^2_{L^2(\O_T)},
	\end{aligned}
	\]
	and the same holds for $\nabla\mu$. For the double-well term,
	\[
	\chi_p^\e F(\wt\phi^\e)-F(\phi)
	=\chi_p^\e\bigl(F(\wt\phi^\e)-F(\phi)\bigr)-\chi_s^\e F(\phi)
	\longrightarrow0\qquad\text{in }L^1(\O_T),
	\]
	the first term by \eqref{TSL02}$_2$ and the second by absolute
	continuity of the integral, since $F(\phi)\in L^1(\O_T)$ and
	$|S\X\O_s^\e|\to0$. Hence
	\[
	\int_S\chi\int_{\O_p^\e}F(\phi^\e)\,\md x\,\md t
	\longrightarrow
	\int_S\chi\int_\O F(\phi)\,\md x\,\md t .
	\]
	For the force term, \eqref{ForceConvergence} and the strong convergence
	$\overline{\Gu^\e}\to\Gu$ in $L^2(\O_T)^3$ give
	\[
	\int_S w\int_{\O_p^\e}\th^\e\cdot\Gu^\e\,\md x\,\md s
	=
	\int_{\O_T} w\,\overline{\th^\e}\cdot\overline{\Gu^\e}\,\md(x,t)
	\longrightarrow
	\int_S w\int_{\O}\th_0\cdot\Gu\,\md x\,\md s .
	\]
	For the source term, $\chi_p^\e G(\wt\phi^\e)\to G(\phi)$ strongly in
	$L^2(\O_T)$ by \eqref{TSL02}$_4$ and Lemma~\ref{lem:hole-error}(iii),
	and $\wt\mu^\e\rightharpoonup\mu$ weakly in $L^2(\O_T)$ by
	\eqref{TSL0}$_3$, so that
	\[
	\int_S w\int_{\O_p^\e}G(\phi^\e)\mu^\e\,\md x\,\md s
	=
	\int_{\O_T}w\,\chi_p^\e G(\wt\phi^\e)\,\wt\mu^\e\,\md(x,t)
	\longrightarrow
	\int_S w\int_{\O}G(\phi)\mu\,\md x\,\md s .
	\]
	Finally, $\mathcal E^\e(0)\to\mathcal E(0)$. The velocity contribution
	converges by \eqref{ICA01}. The contributions of $\phi_0$ converge by
	absolute continuity of the integral, since $|\nabla\phi_0|^2$ and
	$F(\phi_0)$ belong to $L^1(\O)$, $\phi_0\in H^1(\O)$ being independent of
	$\e$, and $|\O_s^\e|\to0$.
	
	Write the weighted inequality as
	$\int_S\chi\,A^\e\,\md t\le\int_S\chi\,B^\e\,\md t$, where $A^\e$ and
	$B^\e$ collect the left- and right-hand terms, respectively, and let $A$
	and $B$ denote the corresponding limit expressions. Since
	$A,B\in L^1(S)$, the lower semicontinuity of each term in $A^\e$ gives
	$\int_S\chi A\le\liminf_{\e\to0}\int_S\chi A^\e$, while each term in
	$B^\e$ converges, so $\int_S\chi B^\e\to\int_S\chi B$. Therefore
	\[
	\int_S\chi A
	\le
	\liminf_{\e\to0}\int_S\chi A^\e
	\le
	\limsup_{\e\to0}\int_S\chi B^\e
	=
	\int_S\chi B .
	\]
	Thus $\int_S\chi(t)\bigl(A(t)-B(t)\bigr)\,\md t\le0$ for every
	$\chi\in C_c^\infty(S)$ with $\chi\ge0$. Hence $A\le B$ a.e.\ in $S$,
	which is \eqref{weak-energy2}.
\end{proof}

\subsection{Proof of Corollary~\ref{CorGradPhiStrong}}
\begin{proof}[Proof of Corollary~\ref{CorGradPhiStrong}]
	Since $\phi^\e\in L^\infty(S;H^1(\O^\e_p))\subset L^2(S;H^1(\O^\e_p))$, the choice
	$\varphi_3=\phi^\e$ is admissible in \eqref{weak-mu1} and gives
	\begin{equation}\label{GradPhiIdentity}
		\int_{\Qc^\e_p}|\nabla\phi^\e|^2\,\md(x,t)
		=\int_{\Qc^\e_p}\mu^\e\phi^\e\,\md(x,t)
		-\int_{\Qc^\e_p}F'(\phi^\e)\phi^\e\,\md(x,t).
	\end{equation}
	Observe that, using the convergences \eqref{TSL0}$_{2,3}$ and \eqref{TSL02}$_3$, we obtain
	\begin{multline*}
		\int_{\Qc^\e_p}\mu^\e\phi^\e\,\md(x,t)
		-\int_{\Qc^\e_p}F'(\phi^\e)\phi^\e\,\md(x,t)\\
		=\int_{\O_T}\wt\mu^\e\overline{\phi^\e}\,\md(x,t)
		-\int_{\O_T}F'(\wt\phi^\e)\overline{\phi^\e}\,\md(x,t)\to \int_{\O_T}\mu\phi\,\md(x,t)
		-\int_{\O_T}F'(\phi)\phi\,\md(x,t).
	\end{multline*}
%
%
%
	
	The limit fields satisfy \eqref{HomogenizedPotential}, and
	$\phi\in\fC\subset L^2(S;H^1(\O))$ is an admissible test function there. Taking
	$\varphi_3=\phi$ gives
	\[
	\int_{\O_T}\mu\phi\,\md(x,t)-\int_{\O_T}F'(\phi)\phi\,\md(x,t)
	=\int_{\O_T}|\nabla\phi|^2\,\md(x,t),
	\]
	so that \eqref{GradPhiIdentity} yields
	$\|\overline{\nabla\phi^\e}\|_{L^2(\O_T)}\to\|\nabla\phi\|_{L^2(\O_T)}$.
	
	Finally $\overline{\nabla\phi^\e}=\chi_p^\e\nabla\wt\phi^\e$, and
	$\nabla\wt\phi^\e\rightharpoonup\nabla\phi$ weakly in $L^2(\O_T)^3$ by
	\eqref{TSL0}$_4$, so Lemma~\ref{lem:hole-error}(ii) gives
	$\overline{\nabla\phi^\e}\rightharpoonup\nabla\phi$ weakly in $L^2(\O_T)^3$. In a
	Hilbert space weak convergence together with convergence of the norms implies
	norm convergence, which is \eqref{GradPhiStrong}.
\end{proof}

\subsection{Existence of unique weak solution}
\begin{proof}[Proof of Theorem \ref{ThmUniq}]
	Throughout, $C>0$ denotes a generic constant, which may depend on the norms of
	the two solutions in \eqref{LimitBounds01} below, and $\delta>0$ is an
	arbitrary parameter, with $C_\delta$ depending on $\delta$. Since $\delta$ is
	arbitrary, we write each estimate with a single factor $\delta$ in front of
	the dissipative terms; the finitely many applications of Young's inequality
	affect only $C_\delta$. All norms without a domain are taken over $\O$. The
	proof is divided into five steps.
	
	{\bf Step 1. The bootstrap \eqref{ExtraReg01}.}
	By definition, a weak solution has the regularity
	\begin{equation}\label{LimitBounds01}
		\begin{aligned}
			\phi\in L^\infty(S;H^1(\O))\cap H^1(S;H^{-1}(\O)),\qquad
			\mu\in L^2(S;H^1(\O)),\\
			\mathbf u\in L^\infty(S;L^2(\O)^3)\cap L^2(S;H^1_{0,\div}(\O)^3)
			\cap H^1(S;H_{0,\div}^{-1}(\O)^3),
		\end{aligned}
	\end{equation}
	the time regularity of $\mathbf u$ being automatic for any triple
	satisfying \eqref{HomogenizedMomentum}, by the argument of Step~4 of the
	proof of Theorem~\ref{ThHomogenized}. In particular,
	$\phi\in L^\infty(S;L^6(\O))$, and, by the Lions--Magenes lemma for the
	triple $H^1_{0,\div}(\O)^3\subset H^\sigma\subset H^{-1}_{0,\div}(\O)^3$
	\cite[Ch.~III, Lemma~1.2]{temam1977}, $\mathbf u\in C([0,T];H^\sigma)$ and
	$\langle\partial_t\mathbf u,\mathbf u\rangle=\frac12\frac{\md}{\md t}\|\mathbf u\|^2_{L^2}$
	a.e.\ in $S$.
	
	Taking $\varphi_3\in H^1(\O)$ in \eqref{HomogenizedPotential} shows that
	$-\Delta\phi+F'(\phi)=\mu$ holds in $\O$, together with
	$\Gn\cdot\nabla\phi=0$ on $\partial\O$ in the sense of the Neumann trace. By
	the Sobolev embedding $H^1(\O)\hookrightarrow L^6(\O)$ and the growth
	condition \eqref{F01},
	\[
	\|F'(\phi)\|_{L^2}\le C\bigl(1+\|\phi\|^3_{L^6}\bigr)
	\le C\bigl(1+\|\phi\|^3_{H^1}\bigr),
	\]
	so that $F'(\phi)\in L^\infty(S;L^2(\O))$ and the right-hand side
	$\mu-F'(\phi)$ belongs to $L^2(S;L^2(\O))$. Elliptic regularity for the
	Neumann Laplacian on a domain of class $C^{1,1}$ \cite[Theorem~31.27(ii) and Remark~31.28]{ErnGuermond2021} yields
	\begin{equation}\label{H2reg01}
		\phi\in L^2(S;H^2(\O)),
		\qquad\text{hence}\qquad
		\nabla\phi\in L^2(S;L^6(\O)^3)
	\end{equation}
	by $H^2(\O)\hookrightarrow W^{1,6}(\O)$ in dimension three. Consequently,
	using $F''(s)=3s^2-1$,
	\[
	\|\nabla F'(\phi)\|_{L^2}
	=\|F''(\phi)\nabla\phi\|_{L^2}
	\le C\bigl(1+\|\phi\|^2_{L^6}\bigr)\|\nabla\phi\|_{L^6}
	\in L^2(S),
	\]
	so that $\Delta\phi=F'(\phi)-\mu\in L^2(S;H^1(\O))$. Elliptic regularity for
	the Neumann Laplacian on a domain of class $C^{2,1}$ \cite[Theorem~31.29 and Remark~31.28]{ErnGuermond2021} now
	gives \eqref{ExtraReg01}.
	
	{\bf Step 2. The difference and its spatial mean.}
	Let $(\mathbf u_i,\phi_i,\mu_i)$, $i=1,2$, be two weak solutions of the
	homogenized system with the same data, and set
	\[
	\mathbf u:=\mathbf u_1-\mathbf u_2,\qquad
	\phi:=\phi_1-\phi_2,\qquad
	\mu:=\mu_1-\mu_2 .
	\]
	Subtracting \eqref{HomogenizedMomentum}--\eqref{HomogenizedPotential} gives,
	for all admissible test functions,
	\begin{align}
		\langle\partial_t\mathbf u,\bvp_1\rangle
		+2\nu\bigl(D(\mathbf u),D(\bvp_1)\bigr)
		+\lambda\bigl(\phi_1\nabla\mu_1-\phi_2\nabla\mu_2,\bvp_1\bigr)&=0,
		\label{DiffU01}\\
		\langle\partial_t\phi,\varphi_2\rangle
		-\bigl(\phi_1\mathbf u_1-\phi_2\mathbf u_2,\nabla\varphi_2\bigr)
		+\bigl(\nabla\mu,\nabla\varphi_2\bigr)
		+\bigl(G(\phi_1)-G(\phi_2),\varphi_2\bigr)&=0,
		\label{DiffPhi01}\\
		\mu=-\Delta\phi+F'(\phi_1)-F'(\phi_2)&\quad\text{in }\O ,
		\label{DiffMu01}
	\end{align}
	the forcing term dropping out of \eqref{DiffU01} because both solutions are
	driven by the same $\th_0$. Taking $\varphi_2=1$ in \eqref{DiffPhi01}, the
	transport and diffusion terms vanish since $\nabla 1=0$, and we obtain
	\begin{equation}\label{MeanDiff01}
		\frac{\md}{\md t}\langle\phi\rangle
		+\frac{1}{|\O|}\int_{\O}\bigl(G(\phi_1)-G(\phi_2)\bigr)\md x=0,
		\qquad
		\langle\phi\rangle(t):=\frac{1}{|\O|}\int_{\O}\phi(t,x)\,\md x .
	\end{equation}
	Since $G$ is Lipschitz with constant $c_2$ by \eqref{AssG01}, H\"older's
	inequality gives
	\begin{equation}\label{MeanDiff02}
		\Bigl|\frac{\md}{\md t}\langle\phi\rangle\Bigr|
		\le\frac{c_2}{|\O|^{1/2}}\|\phi\|_{L^2},
		\qquad\text{hence}\qquad
		\frac{\md}{\md t}|\langle\phi\rangle|^2
		\le C\|\phi\|_{L^2}\,|\langle\phi\rangle|
		\le C\bigl(\|\phi\|^2_{L^2}+|\langle\phi\rangle|^2\bigr).
	\end{equation}
	Unlike in the case of a conserved phase field, $\langle\phi\rangle$ need not
	vanish, so it must be carried along in the estimate. By the
	Poincar\'e--Wirtinger inequality,
	\begin{equation}\label{PWdiff01}
		\|\phi\|^2_{H^1}\le C\bigl(\|\nabla\phi\|^2_{L^2}+|\langle\phi\rangle|^2\bigr).
	\end{equation}
	Accordingly, we set
	\begin{equation}\label{EnergyDiff01}
		E(t):=\|\mathbf u(t)\|^2_{L^2}
		+\lambda\|\nabla\phi(t)\|^2_{L^2}
		+|\langle\phi\rangle(t)|^2 ,
	\end{equation}
	so that, by \eqref{PWdiff01}, $\|\phi\|^2_{H^1}\le CE$ and
	$\|\mathbf u\|^2_{L^2}\le E$, and the right-hand side of
	\eqref{MeanDiff02} is bounded by $CE$.
	
	{\bf Step 3. The differential inequality.}
	We choose $\bvp_1=\mathbf u$ in \eqref{DiffU01} and
	$\varphi_2=-\lambda\Delta\phi$ in \eqref{DiffPhi01}. The former is
	admissible because $\mathbf u\in L^2(S;H^1_{0,\div}(\O)^3)$ with
	$\partial_t\mathbf u\in L^2(S;H^{-1}_{0,\div}(\O)^3)$, so that
	$\langle\partial_t\mathbf u,\mathbf u\rangle
	=\frac12\frac{\md}{\md t}\|\mathbf u\|^2_{L^2}$ by Step~1; the latter is
	admissible by \eqref{ExtraReg01}, which gives
	$\Delta\phi\in L^2(S;H^1(\O))$.
	
	The identity
	$\langle\partial_t\phi,-\lambda\Delta\phi\rangle
	=\tfrac{\lambda}{2}\tfrac{\md}{\md t}\|\nabla\phi\|^2_{L^2}$ requires a word.
	Let $V:=\{v\in H^3(\O):\Gn\cdot\nabla v=0\text{ on }\partial\O\}$, which is
	dense in $H^1(\O)$, and take $H^1(\O)$ with its inner product as pivot space,
	so that $V\subset H^1(\O)\subset V'$. For $v\in V$, integration by parts gives
	$(v,w)_{H^1}=(v-\Delta v,w)_{L^2}$ for all $w\in H^1(\O)$, so the functional
	$v\mapsto\langle\partial_t\phi,v-\Delta v\rangle_{H^{-1},H^1}$ represents
	$\partial_t\phi$ in $V'$, with
	$\|\partial_t\phi\|_{V'}\le C\|\partial_t\phi\|_{H^{-1}(\O)}$. Since
	$\phi\in L^2(S;V)$ by \eqref{ExtraReg01} and $\partial_t\phi\in L^2(S;V')$,
	the Lions--Magenes lemma for this triple gives
	$\phi\in C([0,T];H^1(\O))$ and
	$\langle\partial_t\phi,\phi-\Delta\phi\rangle
	=\tfrac12\tfrac{\md}{\md t}\|\phi\|^2_{H^1}$, while the triple
	$H^1(\O)\subset L^2(\O)\subset H^{-1}(\O)$ gives
	$\langle\partial_t\phi,\phi\rangle=\tfrac12\tfrac{\md}{\md t}\|\phi\|^2_{L^2}$.
	The claim follows by subtraction.
	
	We now add the two identities. Using
	$\phi_1\nabla\mu_1-\phi_2\nabla\mu_2=\phi\nabla\mu_1+\phi_2\nabla\mu$,
	$\phi_1\mathbf u_1-\phi_2\mathbf u_2=\phi\,\mathbf u_1+\phi_2\,\mathbf u$ and,
	by \eqref{DiffMu01},
	$\nabla\mu=-\nabla\Delta\phi+\nabla\bigl(F'(\phi_1)-F'(\phi_2)\bigr)$, the
	capillary term contributes $\lambda(\phi_2\nabla\Delta\phi,\mathbf u)$ and the
	transport term contributes $-\lambda(\phi_2\mathbf u,\nabla\Delta\phi)$; these
	two terms cancel. We obtain
	\begin{multline}\label{DiffIneq01}
		\frac12\frac{\md}{\md t}
		\Bigl(\|\mathbf u\|^2_{L^2}+\lambda\|\nabla\phi\|^2_{L^2}\Bigr)
		+2\nu\|D(\mathbf u)\|^2_{L^2}
		+\lambda\|\nabla\Delta\phi\|^2_{L^2}\\
		=\ \underbrace{-\lambda\bigl(\phi\nabla\mu_1,\mathbf u\bigr)}_{=:J_1}
		\underbrace{-\lambda\bigl(\phi_2\nabla(F'(\phi_1)-F'(\phi_2)),
			\mathbf u\bigr)}_{=:J_2}
		+\underbrace{\lambda\bigl(\nabla(F'(\phi_1)-F'(\phi_2)),
			\nabla\Delta\phi\bigr)}_{=:J_3}\\
		\underbrace{-\lambda\bigl(\phi\,\mathbf u_1,\nabla\Delta\phi\bigr)}_{=:J_4}
		+\underbrace{\lambda\bigl(G(\phi_1)-G(\phi_2),\Delta\phi\bigr)}_{=:J_5}.
	\end{multline}
	Only the moduli $|J_i|$ enter the estimates below, so the signs play no
	further role.
	
	{\bf Step 4. Estimate of the right-hand side.}
	Throughout we use $H^1(\O)\hookrightarrow L^6(\O)$, the interpolation
	inequality $\|v\|_{L^3}\le C\|v\|^{1/2}_{L^2}\|v\|^{1/2}_{H^1}$, and the
	elliptic estimates
	\begin{equation}\label{InterpDelta01}
		\|\Delta\phi\|_{L^2}
		\le \|\nabla\phi\|^{1/2}_{L^2}\|\nabla\Delta\phi\|^{1/2}_{L^2},
		\qquad
		\|\nabla\phi\|_{L^6}\le C\|\phi\|_{H^2}
		\le C\bigl(\|\Delta\phi\|_{L^2}+\|\phi\|_{L^2}\bigr).
	\end{equation}
	The first estimate follows from
	$\|\Delta\phi\|^2_{L^2}=-(\nabla\phi,\nabla\Delta\phi)$, the boundary term
	vanishing because $\Gn\cdot\nabla\phi_i=0$ on $\partial\O$ for $i=1,2$ and
	hence $\Gn\cdot\nabla\phi=0$. Since $\|\phi_i\|_{L^6}\in L^\infty(S)$ by
	\eqref{LimitBounds01}, powers of $\|\phi_i\|_{L^6}$ are absorbed into $C$.
	
	By H\"older's and Young's inequalities,
	\begin{equation}\label{J1bound}
		|J_1|\le\lambda\|\phi\|_{L^6}\|\nabla\mu_1\|_{L^2}\|\mathbf u\|_{L^3}
		\le\delta\|\mathbf u\|^2_{H^1}
		+C_\delta\|\nabla\mu_1\|^2_{L^2}\,\|\phi\|^2_{H^1}.
	\end{equation}
	
	Writing
	$\nabla\bigl(F'(\phi_1)-F'(\phi_2)\bigr)
	=F''(\phi_1)\nabla\phi+\bigl(F''(\phi_1)-F''(\phi_2)\bigr)\nabla\phi_2$
	and using $F''(\phi_1)-F''(\phi_2)=3(\phi_1+\phi_2)\phi$, we obtain
	\begin{equation}\label{FprimeDiff01}
		\bigl\|\nabla(F'(\phi_1)-F'(\phi_2))\bigr\|_{L^2}
		\le C\|\nabla\phi\|_{L^6}
		+C\|\phi_2\|_{H^2}\|\phi\|_{H^1},
	\end{equation}
	where we used $\|\nabla\phi_2\|_{L^6}\le C\|\phi_2\|_{H^2}\in L^2(S)$ by
	\eqref{H2reg01}.
	
	The term $J_2$ is the most delicate one: through \eqref{FprimeDiff01} it
	carries both a factor that must be absorbed into
	$\|\nabla\Delta\phi\|^2_{L^2}$ and one that must be absorbed into
	$\|D(\mathbf u)\|^2_{L^2}$, so a single application of Young's inequality
	does not suffice. By \eqref{FprimeDiff01} and \eqref{InterpDelta01},
	\[
	|J_2|\le C\bigl(\|\Delta\phi\|_{L^2}+\|\phi\|_{L^2}
	+\|\phi_2\|_{H^2}\|\phi\|_{H^1}\bigr)\|\mathbf u\|_{L^3}.
	\]
	In the leading contribution we use both interpolation inequalities, which
	give
	$C\|\nabla\phi\|^{1/2}_{L^2}\|\nabla\Delta\phi\|^{1/2}_{L^2}
	\|\mathbf u\|^{1/2}_{L^2}\|\mathbf u\|^{1/2}_{H^1}$. Young's inequality,
	applied first to the product of the two factors carrying derivatives and the
	two $L^2$ factors, and then to each pair, bounds this by
	\[
	\delta\|\nabla\Delta\phi\|^{2}_{L^2}
	+\delta\|\mathbf u\|^{2}_{H^1}
	+C_\delta\bigl(\|\nabla\phi\|^{2}_{L^2}+\|\mathbf u\|^{2}_{L^2}\bigr).
	\]
	The two remaining contributions contain no factor
	$\|\nabla\Delta\phi\|_{L^2}$ and are bounded, using
	$\|\mathbf u\|_{L^3}\le C\|\mathbf u\|_{H^1}$, by
	$\delta\|\mathbf u\|^2_{H^1}
	+C_\delta\bigl(1+\|\phi_2\|^2_{H^2}\bigr)\|\phi\|^2_{H^1}$. Altogether,
	\begin{equation}\label{J2bound}
		|J_2|\le
		\delta\|\nabla\Delta\phi\|^2_{L^2}
		+\delta\|\mathbf u\|^2_{H^1}
		+C_\delta\bigl(1+\|\phi_2\|^2_{H^2}\bigr)E .
	\end{equation}
	
	For $J_3$, we pair \eqref{FprimeDiff01} with $\|\nabla\Delta\phi\|_{L^2}$.
	Through \eqref{InterpDelta01}, the first summand produces a term
	$C\|\nabla\phi\|^{1/2}_{L^2}\|\nabla\Delta\phi\|^{3/2}_{L^2}$, to which
	Young's inequality with the conjugate exponents $4$ and $4/3$ applies, and a
	term $C\|\phi\|_{L^2}\|\nabla\Delta\phi\|_{L^2}$. Hence
	\begin{equation}\label{J3bound}
		|J_3|\le\delta\|\nabla\Delta\phi\|^2_{L^2}
		+C_\delta\bigl(1+\|\phi_2\|^2_{H^2}\bigr)\|\phi\|^2_{H^1}.
	\end{equation}
	
	By H\"older's and Young's inequalities,
	\begin{equation}\label{J4bound}
		|J_4|\le\lambda\|\phi\|_{L^6}\|\mathbf u_1\|_{L^3}
		\|\nabla\Delta\phi\|_{L^2}
		\le\delta\|\nabla\Delta\phi\|^2_{L^2}
		+C_\delta\|\mathbf u_1\|^2_{L^3}\,\|\phi\|^2_{H^1},
	\end{equation}
	with $\|\mathbf u_1\|^2_{L^3}\le C\|\mathbf u_1\|_{L^2}\|\mathbf u_1\|_{H^1}$.
	
	Since $G$ is Lipschitz with constant $c_2$, \eqref{InterpDelta01} and
	Young's inequality give
	\begin{equation}\label{J5bound}
		|J_5|\le c_2\lambda\|\phi\|_{L^2}\|\Delta\phi\|_{L^2}
		\le\delta\|\nabla\Delta\phi\|^2_{L^2}+C_\delta\|\phi\|^2_{H^1}.
	\end{equation}
	
	Since $\mathbf u(t)\in H^1_{0,\div}(\O)^3$, Korn's and Poincar\'e's
	inequalities on $\O$ give
	$\|\mathbf u\|^2_{H^1}\le C_K\|D(\mathbf u)\|^2_{L^2}$. Hence all
	contributions $\delta\|\mathbf u\|^2_{H^1}$ in \eqref{J1bound} and
	\eqref{J2bound} are absorbed into $2\nu\|D(\mathbf u)\|^2_{L^2}$, and all
	contributions $\delta\|\nabla\Delta\phi\|^2_{L^2}$ into
	$\lambda\|\nabla\Delta\phi\|^2_{L^2}$, provided $\delta>0$ is chosen small
	enough. No smallness assumption relating $\nu$ and $\lambda$ is required:
	the absorption uses the free parameter $\delta$ alone.
	
	{\bf Step 5. Conclusion by Gr\"onwall's inequality.}
	Collecting \eqref{J1bound}--\eqref{J5bound}, adding \eqref{MeanDiff02},
	and using \eqref{PWdiff01} to replace $\|\phi\|^2_{H^1}$ by
	$C\bigl(\|\nabla\phi\|^2_{L^2}+|\langle\phi\rangle|^2\bigr)$, we arrive at
	\begin{equation}\label{Gronwall01}
		\frac{\md}{\md t}E(t)
		+2\nu\|D(\mathbf u)\|^2_{L^2}
		+\lambda\|\nabla\Delta\phi\|^2_{L^2}
		\ \le\ \kappa(t)\,E(t)
		\qquad\text{for a.e. }t\in S,
	\end{equation}
	where
	\[
	\kappa(t):=C\Bigl(
	1
	+\|\nabla\mu_1(t)\|^2_{L^2}
	+\|\phi_2(t)\|^2_{H^2}
	+\|\mathbf u_1(t)\|^2_{L^3}
	\Bigr)\in L^1(S)
	\]
	by \eqref{LimitBounds01} and \eqref{H2reg01}. By Steps~1 and~3,
	$\mathbf u\in C([0,T];H^\sigma)$ and $\phi\in C([0,T];H^1(\O))$, so $E$ is
	continuous on $[0,T]$; since the two solutions share the same initial data,
	$E(0)=0$. Discarding the two nonnegative dissipation terms on the left of
	\eqref{Gronwall01} and applying Gr\"onwall's inequality to
	$\frac{\md}{\md t}E\le\kappa E$ yields $E\equiv0$ on $S$. Hence
	$\mathbf u\equiv0$ and, by \eqref{PWdiff01}, $\phi\equiv0$; substituting
	into \eqref{DiffMu01} gives $\mu\equiv0$. Therefore
	\[
	(\mathbf u_1,\phi_1,\mu_1)=(\mathbf u_2,\phi_2,\mu_2),
	\]
	which proves uniqueness.
\end{proof}

\section{Proofs of the results on the control problems}\label{sec:control}

This section contains the proofs of Lemma~\ref{LemBoundControl},
Theorem~\ref{ThmExistOPe}, Theorem~\ref{ThmGamma} and
Theorem~\ref{ThmOCPHomo}.

\subsection{Existence of microscopic optimal pairs}

\begin{proof}[Proof of Lemma \ref{LemBoundControl}]
	Take the admissible competitor $\th^\e=0$ and let
	$(\Gu^\e_\ast,\phi^\e_\ast,\mu^\e_\ast)\in\Sc^\e(0)$, which is nonempty by
	Remark~\ref{rem:existence}.
	Since $\|0\|_{L^2(\Qc^\e_p)}=0$, Lemma~\ref{Est01} applies with vanishing force
	and gives
	\[
	\|\Gu^\e_\ast\|_{L^2(\Qc^\e_p)}
	+\|\phi^\e_\ast\|_{L^2(\Qc^\e_p)}
	+\|\nabla\phi^\e_\ast\|_{L^2(\Qc^\e_p)}\le C
	\]
	with $C$ independent of $\e$, the first summand by \eqref{MainE01}$_2$ and the
	second and third by \eqref{MainE01}$_4$, in both cases using the boundedness of
	$S$. As $\O^\e_p\subset\O$, we also have
	$\|\mathbf u_d\|_{L^2(\Qc^\e_p)}\le\|\mathbf u_d\|_{L^2(\O_T)}$ and
	$\|\phi_d\|_{L^2(S;H^1(\O^\e_p))}\le\|\phi_d\|_{L^2(S;H^1(\O))}$. Hence, using
	$\|a-b\|^2\le2\|a\|^2+2\|b\|^2$ on each tracking term and recalling
	\eqref{H1PoreNorm},
\begin{multline*}
	\m_\e
	\le \GJ^\e(0,\Gu^\e_\ast,\phi^\e_\ast)
	\le \|\Gu^\e_\ast\|^2_{L^2(\Qc^\e_p)}+\|\mathbf u_d\|^2_{L^2(\O_T)}
	+\|\phi^\e_\ast\|^2_{L^2(\Qc^\e_p)}
	+\|\nabla\phi^\e_\ast\|^2_{L^2(\Qc^\e_p)}
	+\|\phi_d\|^2_{L^2(S;H^1(\O))}\\
	\le C^2+\|\mathbf u_d\|^2_{L^2(\O_T)}+\|\phi_d\|^2_{L^2(S;H^1(\O))}
	=:C_0 ,
\end{multline*}
which is \eqref{InfBound01}, with $C_0$ independent of $\e$. If now
	$\GJ^\e(\th^\e,\Gu^\e,\phi^\e)\le C_0+1$, then, all three terms of
	\eqref{Cost01} being nonnegative,
	$\tfrac\beta2\|\th^\e\|^2_{L^2(\Qc^\e_p)}\le C_0+1$, and
	\eqref{ControlBound01} follows with $K:=\bigl(2(C_0+1)/\beta\bigr)^{1/2}$, which
	is independent of $\e$ because $\beta>0$ is fixed.
\end{proof}

\begin{proof}[Proof of Theorem \ref{ThmExistOPe}]
	Fix $\e\in(0,\e_0)$; throughout this proof $\e$ is frozen and all constants may
	depend on it. Let $\bigl(\th_k,(\Gu_k,\phi_k,\mu_k)\bigr)_{k\in\N}\subset\Ac^\e$
	be a minimizing sequence, so that $\GJ^\e(\th_k,\Gu_k,\phi_k)\to\m_\e$; discarding finitely many terms,
	$\GJ^\e(\th_k,\Gu_k,\phi_k)\le C_0+1$ for every $k$, and $\|\th_k\|_{L^2(\Qc^\e_p)}\le K$ by Lemma~\ref{LemBoundControl}.
	Extracting a subsequence, not relabeled,
	\begin{equation}\label{ControlWeakLimit}
		\th_k\rightharpoonup\th
		\qquad\text{weakly in }L^2(S\X\O^\e_p)^3 .
	\end{equation}
	
	\textbf{Step 1: Compactness of the states.}
	Since $\|\th_k\|_{L^2(\Qc^\e_p)}\le K$ uniformly in $k$, Lemma~\ref{Est01} gives
	\begin{equation}\label{StatesBounded}
		\begin{aligned}
			\|\Gu_k\|_{L^\infty(S;L^2(\O^\e_p))}
			+\|\Gu_k\|_{L^2(S;H^1(\O^\e_p))}
			+\|\partial_t\Gu_k\|_{L^{4/3}(S;\GH^{-1}_{\div}(\O^\e_p))}&\le C,\\
			\|\phi_k\|_{L^\infty(S;H^1(\O^\e_p))}
			+\|\partial_t\phi_k\|_{L^2(S;H^{-1}(\O^\e_p))}
			+\|\mu_k\|_{L^2(S;H^1(\O^\e_p))}&\le C .
		\end{aligned}
	\end{equation}
	The domain $\O^\e_p$ being fixed and Lipschitz, the embedding
	$H^1(\O^\e_p)\Subset L^2(\O^\e_p)$ is compact. 
Applied to the velocity with the triple
$\GH^1_\div(\O^\e_p)\Subset H^\sigma_\e\hookrightarrow\GH^{-1}_\div(\O^\e_p)$,
where the second embedding is injective because $\GH^1_\div(\O^\e_p)$ is dense
in $H^\sigma_\e$, and with the exponent $4/3>1$, the Aubin--Lions--Simon theorem
\cite{simon1987} yields strong convergence in $L^2(\Qc^\e_p)^3$;
	applied to the phase field, the bounds
	\eqref{StatesBounded}$_2$ give equicontinuity in $H^{-1}(\O^\e_p)$ and hence, by
	interpolation with the uniform $H^1$ bound, strong convergence in
	$C([0,T];L^2(\O^\e_p))$. After a further extraction,
	\begin{equation}\label{StatesCompact}
		\begin{aligned}
			\Gu_k&\to\Gu \quad\text{strongly in }L^2(S;L^s(\O^\e_p)^3),\ 2\le s<6,\quad
			&&\Gu_k\rightharpoonup\Gu \quad\text{weakly in }L^2(S;\GH^1_{\div}(\O^\e_p))\\
			\phi_k&\to\phi \quad\text{strongly in }C([0,T];L^s(\O^\e_p)),\ 2\le s<6,\quad
			&&\nabla\phi_k\rightharpoonup\nabla\phi\quad\text{weakly in }L^2(\Qc^\e_p)^3,\\
			\mu_k&\rightharpoonup\mu\quad\text{weakly in }L^2(S;H^1(\O^\e_p)),
		\end{aligned}
	\end{equation}
the two strong convergences being upgraded from $s=2$ by interpolation with the
uniform bounds in $L^2(S;L^6(\O^\e_p))$ for the velocity and in
$L^\infty(S;L^6(\O^\e_p))$ for the phase field, both following from
\eqref{StatesBounded} and the Sobolev embedding
$H^1(\O^\e_p)\hookrightarrow L^6(\O^\e_p)$, and \eqref{StatesCompact}$_4$
	following from \eqref{StatesBounded}$_2$.
	
	\textbf{Step 2: The limit triple is a weak solution for $\th$.}
	We pass to the limit $k\to\infty$ in \eqref{weak-momentum1}--\eqref{weak-mu1}
	for fixed test functions $\bvp_1\in L^4(S;\GH^1_{\div}(\O^\e_p))$ and
	$\varphi_2,\varphi_3\in L^2(S;H^1(\O^\e_p))$. 
The linear terms converge by \eqref{StatesCompact}, the time-derivative terms
by the weak convergences $\partial_t\Gu_k\rightharpoonup\partial_t\Gu$ in
$L^{4/3}(S;\GH^{-1}_\div(\O^\e_p))$ and $\partial_t\phi_k\rightharpoonup\partial_t\phi$
in $L^2(S;H^{-1}(\O^\e_p))$, which hold after a further extraction by
\eqref{StatesBounded}.
	The capillary term $\phi_k\nabla\mu_k$, the transport
	term $\phi_k\Gu_k$, the source term $G(\phi_k)$ and the double-well term
	$F'(\phi_k)$ are treated exactly as in Step~2 of Theorem~\ref{ThHomogenized},
	with \eqref{StatesCompact} in place of Lemma~\ref{lem:limit-compactness} and
	with no hole error to control, the domain being fixed here. Only the convective
	term differs, $\Rey_\e$ being a fixed number at this level: the sequence
	$\Gu_k\otimes\Gu_k$ is bounded in $L^{4/3}(S;L^2(\O^\e_p)^{3\X3})$, by
	\eqref{StatesBounded}$_1$ and the interpolation
	$L^\infty(S;L^2)\cap L^2(S;H^1)\hookrightarrow L^{8/3}(S;L^4)$, and converges
	strongly in $L^1(S;L^2(\O^\e_p)^{3\X3})$ by \eqref{StatesCompact}$_1$ with
	$s=4$; the two together give
	$\Gu_k\otimes\Gu_k\rightharpoonup\Gu\otimes\Gu$ weakly in
	$L^{4/3}(S;L^2(\O^\e_p)^{3\X3})$, and
	$\nabla\bvp_1\in L^4(S;L^2(\O^\e_p)^{3\X3})$ makes the pairing converge.
	
	The force term converges by \eqref{ControlWeakLimit} paired with the fixed
	$\bvp_1$. 
The initial conditions are attained: the embedding
$W^{1,4/3}(S;\GH^{-1}_\div(\O^\e_p))\hookrightarrow C([0,T];\GH^{-1}_\div(\O^\e_p))$
is continuous and $\Gu_k\rightharpoonup\Gu$ weakly in
$W^{1,4/3}(S;\GH^{-1}_\div(\O^\e_p))$, so
$\Gu^\e_0=\Gu_k(0)\rightharpoonup\Gu(0)$ in $\GH^{-1}_\div(\O^\e_p)$; since
$H^\sigma_\e\hookrightarrow\GH^{-1}_\div(\O^\e_p)$ is injective,
$\Gu(0)=\Gu^\e_0$. For the phase field, $\phi_k(0)=\phi^\e_0$ and
$\phi_k\to\phi$ in $C([0,T];L^2(\O^\e_p))$ by \eqref{StatesCompact}$_2$.
	Thus \eqref{weak-momentum1}--\eqref{weak-mu1} hold
	for the limit triple with body force $\th$.
	
	It remains to verify the energy inequality \eqref{weak-energy1}, which is part
	of Definition~\ref{Def01}. Multiplying \eqref{weak-energy1} for
	$(\Gu_k,\phi_k,\mu_k)$ by a weight $\chi\in C^\infty_c(S)$, $\chi\ge0$, and
	proceeding as in Step~5 of Theorem~\ref{ThHomogenized}, the left-hand side is
	handled by weak lower semicontinuity of the $L^2$ norms of $\Gu_k$,
	$\nabla\phi_k$, $D(\Gu_k)$ and $\nabla\mu_k$, and by Fatou's lemma for
	$F(\phi_k)$, using $F\ge-\tfrac34$ from \eqref{F01} and the a.e.\ convergence of
	$\phi_k$. On the right-hand side, $\mathcal E^\e(0)$ is the same for every $k$;
	the force term converges by \eqref{ControlWeakLimit} paired with the strong
	convergence $\Gu_k\to\Gu$ in $L^2(\Qc^\e_p)^3$; and the source term converges by
	the strong convergence of $G(\phi_k)$ paired with the weak convergence of
	$\mu_k$. Taking the $\liminf$ and letting $\chi$ vary gives \eqref{weak-energy1}
	for the limit triple. Hence $(\Gu,\phi,\mu)\in\Sc^\e(\th)$, that is,
	$\bigl(\th,(\Gu,\phi,\mu)\bigr)\in\Ac^\e$.
	
	\textbf{Step 3: Lower semicontinuity of the cost.}
	We split the phase-field term according to \eqref{H1PoreNorm}. The velocity
	term and the $L^2$ part of the phase-field term converge, by the strong
	convergences \eqref{StatesCompact}$_{1,2}$. 
The gradient part converges only weakly, by \eqref{StatesCompact}$_4$, and is
therefore weakly lower semicontinuous.
	The control term is weakly lower
	semicontinuous with respect to \eqref{ControlWeakLimit}. Adding two convergent
	terms to two lower semicontinuous ones gives
	\[
	\GJ^\e(\th,\Gu,\phi)
	\le\liminf_{k\to\infty}\GJ^\e(\th_k,\Gu_k,\phi_k)
	=\m_\e ,
	\]
	and since $\bigl(\th,(\Gu,\phi,\mu)\bigr)$ is admissible the inequality is an
	equality. Thus the limit pair is optimal and $\m_\e$ is attained.
\end{proof}

\subsection{Proofs of Theorems~\ref{ThmGamma} and~\ref{ThmOCPHomo}}
\label{ssec:gamma}

First observe that, $\|\chi^\e_pw_d-w_d\|_{L^2(\O_T)}=\|w_d\|_{L^2(S\X\O^\e_s)}\to0$
	by absolute continuity of the integral, since $|\O^\e_s|\to0$. Hence, by
	\eqref{eq:ideC},
	\begin{equation}\label{eq:tracking-conv}
		\overline{w^\e}\to w\ \text{ strongly in }L^2(\O_T)
		\quad\Longrightarrow\quad
		\|w^\e-w_d\|_{L^2(\Qc^\e_p)}\longrightarrow\|w-w_d\|_{L^2(\O_T)} ,
	\end{equation}
	and the same holds componentwise for vector-valued fields.

\begin{proof}[Proof of Theorem \ref{ThmGamma}]
	\textbf{Step 1: Equicoercivity.}
	Let $\Jc^\e(\Lambda^\e)\le M$. By \eqref{GammaFunctional},
	$\Lambda^\e=(\th^\e,\Gu^\e,\phi^\e,\mu^\e)\in\Ac^\e$, and the last term
	of \eqref{GammaCost} gives
	$\|\overline{\th^\e}\|_{L^2(\O_T)}=\|\th^\e\|_{L^2(\Qc^\e_p)}\le K
	:=(2M/\beta)^{1/2}$. We show that every $\{\Lambda^\e\}\subset\Ac^\e$
	with controls bounded by $K$ has the property stated in (i); this is
	the only compactness statement used below.
	
	Fix a subsequence. A further subsequence satisfies
	$\overline{\th^\e}\rightharpoonup\th$ weakly in $L^2(\O_T)^3$, which is
	\eqref{OCConv1} and, together with the bound, hypothesis
	\eqref{ForceConvergence} with $\th_0=\th$. By
	Lemma~\ref{lem:limit-compactness}, a further subsequence and some
	$(\Gu,\phi,\mu)\in\fU\X\fC\X\fW$ satisfy \eqref{TSL0}, which contains
	\eqref{OCConv2}, \eqref{OCConv3}$_1$ and \eqref{OCConv4}. By
	Theorem~\ref{ThHomogenized} and Lemma~\ref{lem:limit-compactness}(iii),
	$(\Gu,\phi,\mu)$ is a weak solution of
	\eqref{HomogenizedMomentum}--\eqref{HomogenizedPotential} with body
	force $\th$ and initial data \eqref{eq:limit-initial-conditions}, that
	is, $\Xi:=(\th,\Gu,\phi,\mu)\in\Ac$. Finally,
	Corollary~\ref{CorGradPhiStrong} gives \eqref{OCConv3}$_2$. Hence
	$\Lambda^\e\ocarrow\Xi$, which proves (i).
	
	\textbf{Step 2: The $\liminf$ inequality.}
	Extract a subsequence, not relabeled, along which
	$\Jc^\e(\Lambda^\e)$ converges to the $\liminf$ and
	$\Jc^\e(\Lambda^\e)\le M<\infty$. By Step~1, a further subsequence
	satisfies $\Lambda^\e\ocarrow\Xi$ for some $\Xi\in\Ac$. The limits in
	Definition~\ref{DefOCConv} are unique, so $\Lambda=\Xi\in\Ac$.
	
	Write $\Lambda=(\th,\Gu,\phi,\mu)$. By \eqref{eq:tracking-conv}, the
	three tracking terms of \eqref{GammaCost} converge to those of
	\eqref{Cost02}: the velocity term by \eqref{OCConv2}$_1$, the phase term
	by \eqref{OCConv3}$_1$, and the gradient term, componentwise, by
	\eqref{OCConv3}$_2$. For the control term, weak lower semicontinuity of
	the norm and \eqref{OCConv1} give
	$\|\th\|_{L^2(\O_T)}\le\liminf_{\e\to0}\|\overline{\th^\e}\|_{L^2(\O_T)}$.
	Adding the three convergent terms to the lower semicontinuous one gives
	(ii).
	
	\textbf{Step 3: The recovery sequence.}
	Let $\Xi=(\th,\Gu,\phi,\mu)\in\Ac$ and set $\th^\e:=\th|_{\O^\e_p}$, so
	that $\overline{\th^\e}=\chi^\e_p\th$ and
	\begin{equation}\label{RestrictedForceConvergence}
		\|\th^\e\|_{L^2(\Qc^\e_p)}\le\|\th\|_{L^2(\O_T)},
		\qquad
		\overline{\th^\e}\to\th\quad\text{strongly in }L^2(\O_T)^3 ,
	\end{equation}
	the convergence by absolute continuity of the integral, since
	$|\O^\e_s|\to0$. For every $\e\in(0,\e_0)$ choose a weak solution
	$(\Gu^\e,\phi^\e,\mu^\e)$ of \eqref{MainP01} with body force $\th^\e$,
	which exists by Remark~\ref{rem:existence}, and set
	$\Xi^\e:=(\th^\e,\Gu^\e,\phi^\e,\mu^\e)\in\Ac^\e$.
	
	By Step~1 with $K=\|\th\|_{L^2(\O_T)}$, every subsequence has a further
	subsequence with $\Xi^\e\ocarrow\Xi'=(\th',\Gu',\phi',\mu')\in\Ac$,
	and $\th'=\th$ by \eqref{RestrictedForceConvergence}. By Theorem \ref{ThCritical} and \ref{ThmUniq}, $\Ac$ is the graph of $\Sc$, so
	$(\Gu',\phi',\mu')=\Sc(\th)=(\Gu,\phi,\mu)$, that is, $\Xi'=\Xi$.  Since
	every subsequence has a further subsequence converging to the same
	limit $\Xi$, and each convergence in Definition~\ref{DefOCConv} is weak
	or strong convergence in a normed space, the whole family satisfies
	$\Xi^\e\ocarrow\Xi$.
	
	The three tracking terms of $\Jc^\e(\Xi^\e)$ converge by
	\eqref{eq:tracking-conv}, as in Step~2, and the control term converges
	by \eqref{RestrictedForceConvergence}, since
	$\|\overline{\th^\e}\|_{L^2(\O_T)}=\|\chi^\e_p\th\|_{L^2(\O_T)}
	\to\|\th\|_{L^2(\O_T)}$. Hence $\Jc^\e(\Xi^\e)\to\Jc(\Xi)$, which is
	(iii).
\end{proof}

\begin{proof}[Proof of Theorem \ref{ThmOCPHomo}]
	By Theorem~\ref{ThmExistOPe} and Lemma~\ref{LemBoundControl},
	$\Jc^\e(\hat\Xi_\e)=\m_\e\le C_0$ for every $\e\in(0,\e_0)$, so (i)
	follows from Theorem~\ref{ThmGamma}(i) with $M=C_0$.
	
	For $\Xi'\in\Ac$, let $\{\Xi'_\e\}$ be the recovery sequence of
	Theorem~\ref{ThmGamma}(iii). Minimality of $\m_\e$ gives
	$\limsup_{\e\to0}\m_\e\le\lim_{\e\to0}\Jc^\e(\Xi'_\e)=\Jc(\Xi')$, and
	taking the infimum over $\Xi'\in\Ac$,
	\[
	\limsup_{\e\to0}\m_\e\le\m .
	\]
	Conversely, let $\hat\Xi_\e\ocarrow\Xi$ along a subsequence, not
	relabeled. By Theorem~\ref{ThmGamma}(ii), $\Xi\in\Ac$ and
	\begin{equation}\label{ClusterBound}
		\m\le\Jc(\Xi)\le\liminf_{\e\to0}\m_\e
	\end{equation}
	along that subsequence. Applying this to a subsequence, provided by
	(i), of a subsequence realizing $\liminf_{\e\to0}\m_\e$, we obtain
	$\m\le\liminf_{\e\to0}\m_\e$ for the whole family. Hence $\m_\e\to\m$,
	which is (ii). For an arbitrary cluster point, \eqref{ClusterBound} and
	(ii) give $\Jc(\Xi)=\m$, which is (iii).
	
	Along the subsequence, the three tracking terms of \eqref{GammaCost}
	converge to those of \eqref{Cost02} by \eqref{eq:tracking-conv}, as in
	Step~2 of the proof of Theorem~\ref{ThmGamma}, and
	$\Jc^\e(\hat\Xi_\e)=\m_\e\to\m=\Jc(\Xi)$ by (ii) and (iii). Subtracting,
	$\|\overline{\hat\th^\e}\|_{L^2(\O_T)}\to\|\th\|_{L^2(\O_T)}$. Together
	with $\overline{\hat\th^\e}\rightharpoonup\th$ weakly in
	$L^2(\O_T)^3$ by \eqref{OCConv1}, this gives
	\eqref{StrongControlConvergence}, since in a Hilbert space weak
	convergence and convergence of the norms imply strong convergence.
\end{proof}

		\section*{Acknowledgements}
Haradhan Dutta gratefully acknowledges the PMRF (ID-2402788) scheme and IIT Kharagpur for funding his PhD position.

		\addcontentsline{toc}{section}{References}
		{\small
			\bibliographystyle{plain}
			\bibliography{ref_new}
		}

	\end{document}